
\documentclass[a4paper,reqno]{amsart}


\usepackage{etoolbox}

\newtoggle{arxiv}\togglefalse{arxiv}
\newtoggle{siam}\togglefalse{siam}
\newtoggle{ams}\togglefalse{ams}
\newtoggle{wiley}\togglefalse{wiley}
\newtoggle{birk}\togglefalse{birk}

\toggletrue{arxiv}


\usepackage[utf8]{inputenc}
\usepackage[english]{babel}
\usepackage{microtype} 
\usepackage[T1]{fontenc}

\usepackage{amsmath,amssymb,amsfonts,amsthm}
\usepackage{mathtools}
\usepackage{thmtools}

\usepackage{enumitem}
\setlist[enumerate,1]{leftmargin=*,label=\textup{(\roman{*})}}
\setlist[itemize,1]{leftmargin=15pt}

\newlist{steps}{enumerate}{1}
\setlist[steps]{label=\textup{\textbf{\arabic{*}.~Step:}}, ref=\textup{\arabic{*}.~Step}, align=left, leftmargin=0pt, itemindent=*,labelindent=0pt, labelsep=3pt, itemsep=3pt, parsep=2pt,topsep=2pt}

\usepackage{calc}

\usepackage{bm}
\usepackage{wrapfig}
\usepackage{booktabs}
\usepackage{multirow}

\usepackage{hyperref} 
\ifboolexpr{togl{arxiv} or togl{birk}}{
  \hypersetup{%
    colorlinks=true,
    linkcolor=blue,
    citecolor=green!60!black,
  }
}{}
\usepackage{cleveref}

\usepackage{orcidlink} 

\usepackage{upgreek}
\usepackage{pifont}

\usepackage{subcaption}

\usepackage{tikz}
\usetikzlibrary{cd}
\usetikzlibrary{calc}
\usetikzlibrary{arrows}

\usepackage{ifthen}

\graphicspath{{./}{./figs/}}


\makeatletter
\def\moverlay{\mathpalette\mov@rlay}
\def\mov@rlay#1#2{\leavevmode\vtop{%
   \baselineskip\z@skip \lineskiplimit-\maxdimen
   \ialign{\hfil$\m@th#1##$\hfil\cr#2\crcr}}}
\newcommand{\charfusion}[3][\mathord]{
    #1{\ifx#1\mathop\vphantom{#2}\fi
        \mathpalette\mov@rlay{#2\cr#3}
      }
    \ifx#1\mathop\expandafter\displaylimits\fi}
\makeatother

\newcommand{\cupdot}{\charfusion[\mathbin]{\cup}{\cdot}}
\newcommand{\bigcupdot}{\charfusion[\mathop]{\bigcup}{\cdot}}

\newcounter{i} 
\newtoks\striche 

\newcommand{\dd}{\mathrm{d}} 
\newcommand{\dx}[1][x]{\mathop{\dd#1}}


\newcommand{\cl}[2][]{\overline{#2}\ifthenelse{ \equal{#1}{} }{}{^{#1}}} 
\newcommand{\conj}[1]{\overline{#1}} 

\newcommand{\argdot}{\boldsymbol{\cdot}}

\DeclareMathOperator{\ran}{ran}

\DeclareMathOperator{\id}{id}
\DeclareMathOperator{\supp}{supp}

\renewcommand{\div}{\operatorname{div}}
\newcommand{\grad}{\nabla}
\DeclareMathOperator{\rot}{rot}

\DeclarePairedDelimiter{\sset}{\lbrace}{\rbrace}
\DeclarePairedDelimiter{\norm}{\lVert}{\rVert}
\DeclarePairedDelimiter{\abs}{\vert}{\vert}

\DeclarePairedDelimiterX{\dset}[2]{\lbrace}{\rbrace}{#1\,\delimsize\vert\,\mathopen{} #2}
\DeclarePairedDelimiterX{\scprod}[2]{\langle}{\rangle}{#1,#2}
\DeclarePairedDelimiterX{\dualprod}[2]{\langle}{\rangle}{#1,#2}

\renewcommand{\Re}{\operatorname{Re}}

\newcommand{\adjunsymb}{\ast} 

\newcommand{\adjun}[1][1]{%
  \setcounter{i}{1}%
  \striche={\adjunsymb}%
  \loop%
  \ifnum\value{i}<#1%
  \striche=\expandafter{\the\expandafter\striche\adjunsymb}%
  \stepcounter{i}%
  \repeat%
  ^{\the\striche}%
}

\newcommand{\mapping}[4]{%
  \left\{%
    \begin{array}{rcl}%
      #1 &\to & #2, \\
      #3 &\mapsto & #4
    \end{array}%
  \right.%
}

\newcommand{\hamiltonian}{\mathcal{H}}

\newcommand{\Lb}{\mathcal{L}_{\mathrm{b}}} 
\newcommand{\Lp}[1]{\mathrm{L}^{#1}} 

\newcommand{\conC}{\mathrm{C}} 
\newcommand{\Cc}[1][\infty]{\mathring{\mathrm{C}}^{#1}} 

\newcommand{\Hspace}{\mathrm{H}} 
\newcommand{\cH}{\mathring{\Hspace}} 


\newcommand{\Vtau}{\mathcal{V}_{\tau}} 
\newcommand{\Vtaud}{\mathcal{V}_{\tau}^{\times}} 


\newcommand{\XH}{\mathcal{X}_{\hamiltonian}}
\newcommand{\X}{\mathcal{X}}
\newcommand{\Y}{\mathcal{Y}}
\newcommand{\U}{\mathcal{U}}

\newcommand{\boundtr}[1][]{\gamma_{0}\ifthenelse{\equal{#1}{}}{}{\big\vert_{#1}}}
\newcommand{\normaltr}[1][]{\gamma_{\nu}\ifthenelse{\equal{#1}{}}{}{\big\vert_{#1}}}
\newcommand{\tantr}[1][]{\pi_{\tau}\ifthenelse{\equal{#1}{}}{}{\big\vert_{#1}}}
\newcommand{\tanxtr}[1][]{\gamma_{\tau}\ifthenelse{\equal{#1}{}}{}{\big\vert_{#1}}}

\newcommand{\portOp}{P}

\newcommand{\N}{\mathbb{N}}

\newcommand{\R}{\mathbb{R}}
\newcommand{\C}{\mathbb{C}}

\newcommand{\K}{\mathbb{K}}

\newcommand{\fA}{\mathfrak{A}}
\newcommand{\fB}{\mathfrak{B}}

\DeclareMathOperator{\dom}{dom}
\newcommand{\ddt}{\frac{\dd}{\dd t}}

\DeclareMathOperator{\ext}{ext}

\newcommand{\sbvek}[2]{\left[\begin{smallmatrix}#1\\#2\end{smallmatrix}\right]}
\newcommand{\spvek}[2]{\left(\begin{smallmatrix}#1\\#2\end{smallmatrix}\right)}


%

\makeatletter
\makeatother

\ifboolexpr{togl{arxiv} or togl{ams} or togl{birk}}{

\theoremstyle{plain}
\newtheorem{theorem}{Theorem}[section]
\newtheorem{lemma}[theorem]{Lemma}
\newtheorem{proposition}[theorem]{Proposition}
\newtheorem{corollary}[theorem]{Corollary}

\theoremstyle{definition}
\newtheorem{definition}[theorem]{Definition}

\newtheorem{assumption}[theorem]{Assumption}


\theoremstyle{remark}
\newtheorem{remark}[theorem]{Remark}

}{}

\begin{document}

\title{Analysis of coupled Maxwell-cable problems}

\keywords{Maxwell's equations, transmission line, system nodes, infinite-dimensional linear systems, port-Hamiltonian systems, well-posed linear systems}

\subjclass[2020]{35Q61, 37L05, 78A25, 93B28, 93C25}

\begin{abstract}
  Building on the recently published work \cite{ClemensGuReSkrepek25}, which introduces a model for the interaction between electromagnetic fields and radiating (possibly curved) cables, we analyze the qualitative properties of the resulting dynamical system.
  The model features inputs and outputs given by the currents and voltages at the cable ends, while the state comprises the corresponding distributions along the cables and the electromagnetic fields in the surrounding domain.
  We show that the autonomous dynamics (i.e., with zero input) generate a strongly continuous semigroup and establish sufficient conditions for well-posedness, meaning continuous dependence of the state and output trajectories on the inputs and initial conditions.
\end{abstract}

\author[T.~Reis]{Timo Reis\,\orcidlink{0000-0003-0721-8494}}
\address{Institut f\"ur Mathematik, Technische Universit\"at Ilmenau, Weimarer Str.\ 25, 98693, Ilmenau, Thuringia, Germany}
\email{timo.reis@tu-ilmenau.de}

\author[N.~Skrepek]{Nathanael Skrepek\,\orcidlink{0000-0002-3096-4818}}
\address{Department of Applied Mathematics, University of Twente, P.O.\ Box 217, 7500 AE, Enschede, Overijssel, The Netherlands}
\email{n.skrepek@utwente.nl}

\date{October 23, 2025}
\maketitle

\section{Introduction}

The present work builds on the mathematical model introduced in \cite{ClemensGuReSkrepek25} for describing the interaction between electromagnetic fields and radiating cables.
In this framework, the electromagnetic field is governed by Maxwell’s equations, whereas the cables (referred to synonymously as \emph{transmission lines} throughout this work) are described by the telegrapher’s equations.
The coupling between these subsystems is realized through interface conditions that link the cable currents to the magnetic field strength and the cable voltages to the electric field strength.

A particular challenge of this setting lies in its mixed-dimensional character.
The currents and voltages along the cables are functions defined on one-dimensional intervals, while the coupling interface consists of the cable surfaces, that is, two-dimensional submanifolds of $\R^3$.
The electromagnetic field dynamics, in turn, evolve in the three-dimensional exterior domain surrounding the cables.
Altogether, the resulting model comprises coupled telegrapher’s and Maxwell’s equations.

A similar mixed-dimensional coupling was used in \cite{JaSkEh23} for heat exchange, albeit without examining well-posedness.

There exists a vast body of literature in electrical engineering on this type of coupled problems (see, e.g., \cite{RuRaPaRe02,LiWaYaKaAlChFa17,Ra12,LaNuTe88,PaAb81,Agrawal1980}).
However, the simultaneous consideration of curved cables and a rigorous mathematical analysis, including solvability and well-posedness, appears to be missing.

In this article, we address these aspects from the viewpoint of infinite-dimensional linear systems theory and the theory of port-Hamiltonian systems.
We consider $k$ cables with circular cross-sections and possibly curved geometries, which interact with the surrounding electromagnetic field.
The voltage and current distributions along the cables are extended to their lateral surfaces, forming a two-dimensional interface where these quantities serve as tangential boundary data for the electric and magnetic field intensities governed by Maxwell’s equations in the exterior domain.
The inputs and outputs of the overall system are given by the boundary values of the telegrapher’s equations, i.e., by the voltages and currents at the cable ends.

Since the state of the overall system consists of functions of spatial variables, the state space is infinite-dimensional, whereas the input and output spaces are finite-dimensional, determined by linear combinations of the boundary voltages and currents.
We prove that the free (autonomous) dynamics generate a strongly continuous semigroup.
By employing the theory of system nodes \cite{St05}, we derive conditions on the input and the initial state that ensure existence of solutions.
Finally, we provide conditions on the input configuration that guarantee well-posedness, i.e., continuous dependence of the state and output on the input and the initial state.

This article is organized as follows.
After introducing the notation in the remainder of this section, we present the mathematical model in \Cref{sec:model}.
In \Cref{sec:analysis-of-port-operator}, we derive several key properties of the coupling conditions that are essential for the subsequent analysis.
\Cref{sec:coupling-operator-theory} reformulates the coupled problem in an operator-theoretic framework and characterizes all boundary conditions that render the system dissipative.
In \Cref{sec:sysnodes}, we focus on the input–output behavior of the coupled field–cable system.
After introducing a system-node formulation, we establish criteria for well-posedness and demonstrate that the resulting model fits naturally into the port-Hamiltonian framework.

\subsection*{Notation and convention}

Let $\mathcal{X}$ and $\mathcal{Y}$ be complex Hilbert spaces. The Cartesian product of $\mathcal{X}$ and $\mathcal{Y}$ is commonly represented as $\begin{bsmallmatrix}
\mathcal{X}\\ \mathcal{Y}
\end{bsmallmatrix}_\times$, and the extension to more than two spaces is straightforward.
We denote the norm in $\mathcal{X}$ as $\norm{\argdot}_{\mathcal{X}}$ and the identity mapping in $\mathcal{X}$ as $\id_\mathcal{X}$. Similarly, we use $\id_n$ for the identity mapping on $\C^{n}$.

We omit the subscripts indicating the space when the context is clear.
Further, if not stated else, a~Hilbert space is canonically identified with its anti-dual. 

The space of bounded linear operators from $\mathcal{X}$ to $\mathcal{Y}$ is denoted by $\Lb(\mathcal{X},\mathcal{Y})$. As usual, we abbreviate $\Lb(\mathcal{X})\coloneq \Lb(\mathcal{X},\mathcal{X})$.
The domain $\dom (A)$ of a possibly unbounded linear operator $A\colon \dom (A) \subseteq \mathcal{X}\to \mathcal{Y}$ is typically equipped with the graph norm
$\norm{x}_{\dom (A)}\coloneq \big(\norm{x}_{\mathcal{X}}^2 + \norm{Ax}_{\mathcal{Y}}^2\big)^{1/2}$. By writing $A\subset B$ for two operators, we mean that $A$ is a~restriction of $B$, and $\cl{A}$ stands for the closure of a~closable linear operator $A$.
The adjoint of a~densely defined linear operator $A\colon \dom (A) \subseteq \mathcal{X}\to \mathcal{Y}$ is $A\adjun \colon  \dom (A\adjun) \subseteq \mathcal{Y}\to \mathcal{X}$ with
\[
 \dom (A\adjun) = \dset{y\in \mathcal{Y}}
 {\exists\, z\in \mathcal{X}\text{ s.t.\ }\forall\,x\in\dom (A):\scprod{y}{Ax}_{\mathcal{Y}} = \scprod{z}{x}_{\mathcal{X}}}.
\]
The vector $z\in \mathcal{X}$ in the above set is uniquely determined by $y\in\dom (A\adjun)$, and we set $A\adjun y=z$. Note that we identify $\C^{n\times m}\cong \Lb(\C^m,\C^n)$. Together with the fact that $\C^n$ and $\C^m$ are equipped with the Euclidean inner product, this means that $A\adjun \in\C^{n\times m}$ is the conjugate transpose of $A\in\C^{m\times n}$. Likewise, $x^*$ is the conjugate transpose of $x\in\C^n\cong \C^{n\times 1}$, such that the inner product in $\C^n$ reads
\[
  \scprod{x}{y}_{\C^{n}} = y\adjun x.
\]
For $P\in\C^{n\times n}$, we write $P>0$ ($P\geq0$), if $P=P\adjun$ is positive (semi-)definite. Likewise,
$P<0$ ($P\leq0$) means that $P=P\adjun$ is negative (semi-)definite. Further, $A^\dagger\in \C^{n\times m}$ denotes the Moore-Penrose inverse of $A\in\C^{m\times n}$.


We use the notation of the widely used textbook \cite{AdamFour03} by \textsc{Adams et al.} for Lebesgue and Sobolev spaces.
For function spaces with values in a~Hilbert space $\mathcal{X}$, we indicate the additional mark ``$;\mathcal{X}$'' after writing the domain. For instance, the Lebesgue space of $p$-integrable $\mathcal{X}$-valued functions on the domain $\Omega$ is $\Lp{p}(\Omega;\mathcal{X})$.
Note that, throughout this article, integration of $\mathcal{X}$-valued functions always has to be understood in the Bochner sense \cite{Dies77}.

\section{The model}\label{sec:model}
We now present the details of our model.
To this end, we describe the assumptions on the spatial geometry of the problem, the modeling of the cables including the assumptions on the input–output configuration, the modeling of the electromagnetic field, and the coupling between the cable and the field.

Throughout the entire article, $k$ denotes the number of cables.

\subsection{The geometry}\label{sec:Omega}

We assume that the electromagnetic field evolves within a~domain $\Omega\subseteq\R^{3}$ structured as
\begin{equation*}
  \Omega = \Omega_{0} \setminus \bigcup_{i=1}^{k} \cl{\Omega_{i}},
\end{equation*}
where $\Omega_{0} \subseteq \R^{3}$ Lipschitz domain, and the sets
$\Omega_{1},\ldots,\Omega_{k} \subseteq \R^{3}$  fulfill
\begin{align*}
  \cl{\Omega_{i}} \subseteq \Omega_{0},\quad &i=1,\ldots,k,\\
  \cl{\Omega_{i}} \cap \cl{\Omega_{j}} = \emptyset,\quad &i,j=1,\ldots,k \ \text{with}\  i\neq j
\end{align*}
All physical processes under consideration take place within $\Omega_{0}$, which therefore represents the region enclosing the entire field–cable system.
This region will be referred to as the computational domain.
No boundedness of $\Omega_{0}$ is required, and in particular, choosing $\Omega_{0} = \R^{3}$ is admissible.
The subdomains $\Omega_{1}, \ldots, \Omega_{k}$ denote the spatial regions occupied by the individual cables, as illustrated in \Cref{fig:domain}.
Each of them is assumed to have a tubular geometry, as will be specified below.
\begin{figure}[htb]
  \centering
  \begin{subfigure}[b]{0.66\textwidth}
  \centering
  \def\svgwidth{0.7\textwidth}
  
  \def\svgwidth{1\textwidth}
\begingroup%
  \makeatletter%
  \providecommand\color[2][]{%
    \errmessage{(Inkscape) Color is used for the text in Inkscape, but the package 'color.sty' is not loaded}%
    \renewcommand\color[2][]{}%
  }%
  \providecommand\transparent[1]{%
    \errmessage{(Inkscape) Transparency is used (non-zero) for the text in Inkscape, but the package 'transparent.sty' is not loaded}%
    \renewcommand\transparent[1]{}%
  }%
  \providecommand\rotatebox[2]{#2}%
  \newcommand*\fsize{\dimexpr\f@size pt\relax}%
  \newcommand*\lineheight[1]{\fontsize{\fsize}{#1\fsize}\selectfont}%
  \ifx\svgwidth\undefined%
    \setlength{\unitlength}{418.63182729bp}%
    \ifx\svgscale\undefined%
      \relax%
    \else%
      \setlength{\unitlength}{\unitlength * \real{\svgscale}}%
    \fi%
  \else%
    \setlength{\unitlength}{\svgwidth}%
  \fi%
  \global\let\svgwidth\undefined%
  \global\let\svgscale\undefined%
  \makeatother%
  \begin{picture}(1,0.61858422)%
    \lineheight{1}%
    \setlength\tabcolsep{0pt}%
    \put(0,0){\includegraphics[width=\unitlength]{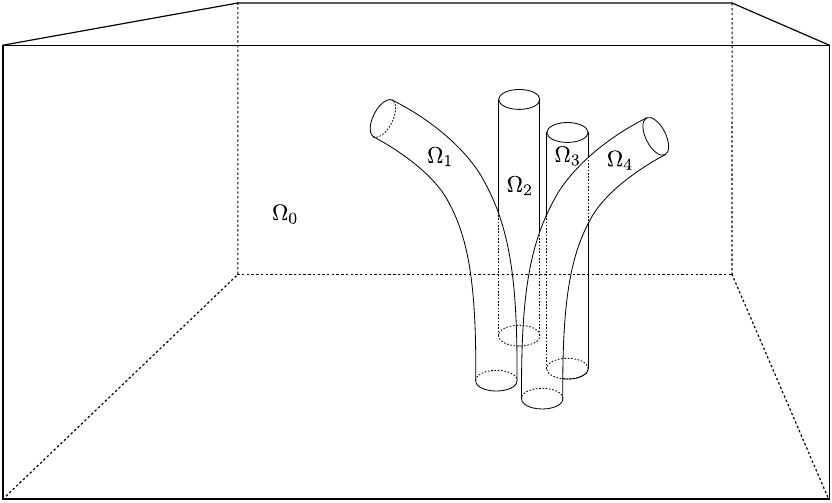}}%
  \end{picture}%
\endgroup%

  \caption{\label{fig:domain}Spatial domain with cables}
  \end{subfigure}
  \hfill
  \begin{subfigure}[b]{0.3\textwidth}
    \centering
  \centering
  \def\svgwidth{0.85\textwidth}
\begingroup%
  \makeatletter%
  \providecommand\color[2][]{%
    \errmessage{(Inkscape) Color is used for the text in Inkscape, but the package 'color.sty' is not loaded}%
    \renewcommand\color[2][]{}%
  }%
  \providecommand\transparent[1]{%
    \errmessage{(Inkscape) Transparency is used (non-zero) for the text in Inkscape, but the package 'transparent.sty' is not loaded}%
    \renewcommand\transparent[1]{}%
  }%
  \providecommand\rotatebox[2]{#2}%
  \newcommand*\fsize{\dimexpr\f@size pt\relax}%
  \newcommand*\lineheight[1]{\fontsize{\fsize}{#1\fsize}\selectfont}%
  \ifx\svgwidth\undefined%
    \setlength{\unitlength}{129.25983819bp}%
    \ifx\svgscale\undefined%
      \relax%
    \else%
      \setlength{\unitlength}{\unitlength * \real{\svgscale}}%
    \fi%
  \else%
    \setlength{\unitlength}{\svgwidth}%
  \fi%
  \global\let\svgwidth\undefined%
  \global\let\svgscale\undefined%
  \makeatother%
  \begin{picture}(1,1.61184216)%
    \lineheight{1}%
    \setlength\tabcolsep{0pt}%
    \put(0,0){\includegraphics[width=\unitlength,page=1]{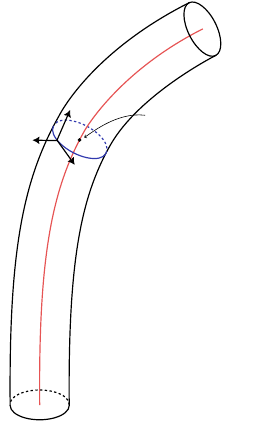}}%
    \put(0.40310242,1.00661566){\color[rgb]{0.19215686,0.22352941,0.68235294}\makebox(0,0)[lt]{\lineheight{1.25}\smash{\begin{tabular}[t]{l}$\alpha_{i}(\eta) + \beta_{i,\eta}$\end{tabular}}}}%
    \put(0.55182944,1.16867705){\color[rgb]{0,0,0}\makebox(0,0)[lt]{\lineheight{1.25}\smash{\begin{tabular}[t]{l}$\alpha_{i}(\eta)$\end{tabular}}}}%
    \put(0.32822577,0.74813787){\color[rgb]{0,0,0}\makebox(0,0)[lt]{\lineheight{1.25}\smash{\begin{tabular}[t]{l}$\Gamma_{i,\textup{lat}}$\end{tabular}}}}%
    \put(0.81257204,1.53978006){\color[rgb]{0,0,0}\makebox(0,0)[lt]{\lineheight{1.25}\smash{\begin{tabular}[t]{l}$\Gamma_{i,\textup{end}}$\end{tabular}}}}%
    \put(0.12366028,-0.03016618){\color[rgb]{0,0,0}\makebox(0,0)[lt]{\lineheight{1.25}\smash{\begin{tabular}[t]{l}$\Gamma_{i,\textup{end}}$\end{tabular}}}}%
    \put(0.2584777,1.207141){\color[rgb]{0,0,0}\makebox(0,0)[lt]{\lineheight{1.25}\smash{\begin{tabular}[t]{l}$\tau_{1}$\end{tabular}}}}%
    \put(0.27802836,0.95899527){\color[rgb]{0,0,0}\makebox(0,0)[lt]{\lineheight{1.25}\smash{\begin{tabular}[t]{l}$\tau_{2}$\end{tabular}}}}%
    \put(0.07698996,1.10208122){\color[rgb]{0,0,0}\makebox(0,0)[lt]{\lineheight{1.25}\smash{\begin{tabular}[t]{l}$\nu$\end{tabular}}}}%
    \put(0.17231858,0.55120236){\color[rgb]{0.90196078,0.32941176,0.32941176}\makebox(0,0)[lt]{\lineheight{1.25}\smash{\begin{tabular}[t]{l}$\alpha_{i}$\end{tabular}}}}%
  \end{picture}%
\endgroup%

  \smallskip
  \caption{\label{fig:cable}Cable parameterization}
  \end{subfigure}
  \caption{Cable geometry}
  \label{fig:geom}
\end{figure}

The cables are allowed to be bent, and we assume that each cable has a~circle-shaped cross-sectional area of constant radius. Denote the radius of this cross-sectional area of the $i$th cable by $r_i$, and let $l_{i}$ be its length. The center curve (i.e., the curve whose trace is consisting of the centers of the cross-sectional circles) is denoted by $\alpha_{i}\colon [0,1]\to\R^3$, see \Cref{fig:cable}. We assume the center curve to be twice continuously differentiable with constant infinitesimal arc length $l_{i}$,
and curvature is bounded by $\frac{1}{r_{i}}$, i.e.,
\begin{equation*}
  \norm{\alpha_{i}^{\prime}(\eta)} = l_{i}
  \quad\text{and}\quad
  \norm{\alpha''(\eta)} < \frac{l_{i}^{2}}{r_{i}} \quad\text{for all}\quad \eta \in [0,1].
\end{equation*}

\noindent It has been shown in \cite[Lem.~C.1]{ClemensGuReSkrepek25} that there exist $\kappa_{i1},\kappa_{i2}\in \conC^{1}([0,1], \R^{3})$, such that, for all $\eta\in[0,1]$,
$\big(\tfrac1{l_i}\alpha'(\eta),\kappa_{i1}(\eta),\kappa_{i2}(\eta)\big)$
is a~positively oriented orthonormal basis of $\R^{3}$ (that is, it forms a~rotation matrix for all $\eta\in[0,1]$). 

The shape of the $i$th cable shape is now expressed by
\[
\begin{split}
 & \Omega_{i} = \dset[\big]{\alpha_i(\eta) + \delta\beta_{i\eta}(\theta)}
  {(\delta,\eta,\theta)\in[0,1]^2 \times \lparen-\uppi,\uppi\rbrack},
    \\[0.5ex]
  \mathllap{\text{with}\quad}
  &\beta_{i\eta}(\theta)
    = r_i\big(\kappa_{i1}(\eta)\sin(\theta) + \kappa_{i2}(\eta)\cos(\theta)\big).
\end{split}
\]
and its boundary is consisting of the disjoint union
$\partial\Omega_{i} = \Gamma_{i,\textup{lat}}\cupdot\Gamma_{i,\textup{end}}$,
where $\Gamma_{i,\textup{end}}$ is the union of cross-sectional areas at the two ends of the $i$th cable and $\Gamma_{i,\textup{lat}}$ is the lateral surface of the $i$th cable. That is, for $\beta_{i\eta}(\theta)$ as above, 
\begin{align*}
  \Gamma_{i,\textup{end}}
  &= \dset[\big]{\alpha_i(\eta) + \delta\beta_{i\eta}(\theta)}
     {(\delta,\eta,\theta) \in \sset{0,1} \times[0,1) \times (-\uppi,\uppi]}, \\
  \Gamma_{i,\textup{lat}}
  &= \dset[\big]{\alpha_{i}(\eta) + \beta_{i\eta}(\theta)}{(\eta,\theta) \in (0,1) \times \lparen -\uppi,\uppi\rbrack}.
\end{align*}

\noindent
The lateral boundary $\Gamma_{i,\textup{lat}}$ of the $i$th cable is now parameterised by
\begin{equation}\label{eq:Gammalat}
\begin{split}
&\Phi_i\colon\mapping{[0,1]\times\lparen -\uppi,\uppi \rbrack}{\R^3}
    {(\eta,\theta)}{\alpha_i(\eta)+\beta_{i\eta}(\theta)}
    \\[1.5ex]
  \mathllap{\text{with}\quad}
  &\beta_{i\eta}(\theta)
    = r_i\big(\kappa_{i1}(\eta)\sin(\theta) + \kappa_{i2}(\eta)\cos(\theta)\big).
    \end{split}
\end{equation}
The requirement that the curvature of the cable's profile curve is strictly limited by $\frac{1}{r_{i}}$ (i.e., $\norm{\alpha''(\eta)} \leq \frac{l_{i}^{2}}{r_{i}}$) ensures that the parametrization \eqref{eq:Gammalat} of the lateral surface is essentially injective.



\subsection{The transmission lines}

\noindent The $k$ transmission lines are described by the \emph{telegrapher’s equations}, 
extended by external current inputs and electric field outputs. 
The internal variables of the model are the $\C^{k}$-valued functions
\smallskip
\begin{center}
 \begin{tabular}{rl}
 $\bm{\psi}$: & \text{magnetic flux}, \\
 $\bm{q}$: &  \text{electric charge}, 
 \end{tabular}
\end{center}
where each component corresponds to the flux (respectively, the charge) 
associated with one of the transmission lines. 
Both quantities depend on time~$t$ and on the spatial coordinate 
$\eta \in [0,1]$. 
The system is driven by an external current input~$\bm{I}_{\ext}:\R_{\ge0}\times[0,1]\to\C^k$, 
and the corresponding external electric field intensity~$\bm{E}_{\ext}:\R_{\ge0}\times[0,1]\to\C^k$ 
is taken as the output. 
These quantities, $\bm{I}_{\ext}$ and $\bm{E}_{\ext}$, 
will later serve as coupling variables between the transmission lines 
and the electromagnetic field.
The transmission line model is
\begin{equation}\label{eq:teleq}
  \begin{aligned}
    \ddt
    \begin{pmatrix}
      \bm{\psi}(t,\eta) \\[1mm] \bm{q}(t,\eta)
    \end{pmatrix}
    &=
    \begin{bmatrix}
      -\bm{R}(\eta) & -\tfrac{\partial}{\partial \eta}\\
      -\tfrac{\partial}{\partial \eta} & -\bm{G}(\eta)
    \end{bmatrix}
    \begin{pmatrix}
      \bm{L}(\eta)^{-1}\bm{\psi}(t,\eta) \\[1mm]
      \bm{C}(\eta)^{-1}\bm{q}(t,\eta)
    \end{pmatrix}+
    \begin{bmatrix}
      \;\;0\\[1mm]-\tfrac{\partial}{\partial \eta}
    \end{bmatrix} \bm{I}_{\ext}(t,\eta),\\
    \bm{E}_{\ext}(t,\eta)
    &=
    \mspace{3mu}
    \begin{bmatrix}
      \mathrlap{\mspace{33mu}0}\hphantom{\quad\quad\;\tfrac{\partial}{\partial \eta}} &{\phantom{-}\tfrac{\partial}{\partial \eta}\;\;}
    \end{bmatrix}
    \begin{pmatrix}
      \bm{L}(\eta)^{-1}\bm{\psi}(t,\eta) \\[1mm]
      \bm{C}(\eta)^{-1}\bm{q}(t,\eta)
    \end{pmatrix},
  \end{aligned}
\end{equation}
where the mappings 
$\bm{C}, \bm{G}, \bm{L}, \bm{R} \colon [0,1] \to \C^{k\times k}$ 
denote, respectively, the transverse capacitance and conductance matrices, 
and the longitudinal inductance and resistance matrices. For any $t>0$, the voltage and total current  
along the transmission lines are represented by the functions 
$\bm{V}, \bm{I}_{\textup{tot}} \colon \R_{\ge0}\times [0,1] \to \C^{k}$, defined as
\begin{equation}\label{eq:input1}
  \bm{V}(t,\eta) \coloneq \bm{C}(\eta)^{-1}\bm{q}(t,\eta), \qquad
  \bm{I}_{\textup{tot}}(t,\eta) \coloneq \bm{L}(\eta)^{-1}\bm{\psi}(t,\eta) + \bm{I}_{\ext}(t,\eta).
\end{equation}

\noindent
Our assumptions concerning the parameters align with those presented in \cite{JaZw12}.

\begin{assumption}[Transmission lines - parameters]\label{ass:tl}
  $k\in\N$, $\bm{C}$, $\bm{L}$, $\bm{R}$, $\bm{G}\in \Lp{\infty}([0,1];\C^{k\times k})$ with
  $\bm{C}^{-1},\bm{L}^{-1}\in \Lp{\infty}([0,1];\C^{k\times k})$ and
  \begin{equation*}  
    \bm{C}(\eta)>0,\;
    \bm{L}(\eta)>0,\;
    \bm{R}(\eta)+\bm{R}(\eta)\adjun \geq 0,\;
    \bm{G}(\eta)+\bm{G}(\eta)\adjun \geq 0
    \quad\text{for a.e.}\quad \eta\in[0,1].
  \end{equation*}
\end{assumption}
\noindent The system is provided with an initial condition $\bm{q}(0,\eta)=\bm{q}_0(\eta)$, $\bm{\psi}(0,\eta)=\bm{\psi}_0(\eta)$ for some given $\bm{q}_0,\bm{\psi}_0:[0,1]\to\C^k$.
We further impose boundary conditions for the voltage and current along the transmission line, as given in \eqref{eq:input1}. For some $m \leq 2k$, with $W_{B,{\rm inp}} \in \C^{m\times 4k}$ and $W_{B,0} \in \C^{(2k-m)\times 4k}$, these are given by
\begin{equation}
u(t) = W_{B,{\rm inp}}
  \begin{pmatrix}
    \phantom{-}\bm{V}(t,0)\\
    \phantom{-}\bm{I}(t,0)  \\
    \phantom{-}\bm{V}(t,1) \\
    -\bm{I}(t,1)
  \end{pmatrix},
\quad
  0 = W_{B,0}
  \begin{pmatrix}
    \phantom{-}\bm{V}(t,0)\\
    \phantom{-}\bm{I}(t,0)  \\
    \phantom{-}\bm{V}(t,1) \\
    -\bm{I}(t,1)
  \end{pmatrix},\label{eq:input2}
\end{equation}
where $u\colon \R_{\geq 0} \to \C^{m}$ represents the input of the system.
The negative sign in the current at $\eta = 1$ reflects that, unlike at $\eta = 0$, the directions of current and voltage are opposite.

\begin{assumption}[Transmission line - input configuration]\label{ass:bndcont}
  The matrix $W_B\coloneq [W_{B,{\rm inp}}^*,\,W_{B,0}^*]^*\in\C^{2k\times 4k}$ has full row rank, and
  \begin{equation}
    W_B
    \begin{bmatrix}
      0 & \id_{{2k}} \\
      \id_{{2k}} & 0
    \end{bmatrix}
    W_B^*\ge0.\label{eq:WBdef}
  \end{equation}
\end{assumption}
\noindent Physical interpretations of the above assumption are discussed in \cite{ClemensGuReSkrepek25}, for example in the context of parallel or serial interconnections with ohmic resistors.

\noindent Our system is moreover equipped with an $\C^{p}$-valued output $y$ of the form
\begin{equation}\label{eq:output}
  y(t) = W_{C,{\rm out}}
  \begin{pmatrix}
    \phantom{-}\bm{V}(t,0) \\
    \phantom{-}\bm{I}(t,0) \\
    \phantom{-}\bm{V}(t,1) \\
    -\bm{I}(t,1)
  \end{pmatrix},
\end{equation}
for some $W_{C,{\rm out}}\in\C^{p\times 4m}$. A special role is played by so-called \emph{co-located outputs}.
\begin{definition}
  Assume that $W_{B,{\rm inp}}\in\C^{m\times 4k}$, $W_{B,0}\in\C^{(2k-m)\times 4k}$, $m\leq 2k$, $W_B \coloneq [W_{B,{\rm inp}}^*,\,W_{B,0}^*]^*\in\C^{2m\times 4m}$ fulfill \Cref{ass:bndcont}.
  Then an output \eqref{eq:output} is called \em{co-located} to $u$ as in \eqref{eq:input1}, \eqref{eq:input2}, if $W_{C,{\rm out}}\in\C^{m\times 4k}$ has the form
  \begin{equation}\label{eq:col1}
    W_{C,{\rm out}}=[\id_m,\,0_{m\times (2k-m)}]\,W_C
  \end{equation}
  for some $W_C\in\C^{2k\times 4k}$ with the property that $[W_B^*,\,W_C^*]\in\C^{4k\times 4k}$ 
  with
  \begin{equation}
    \begin{bsmallmatrix}0&\id_{2k}\\\id_{2k}&0\end{bsmallmatrix}-\begin{bsmallmatrix}W_{B}\\W_C\end{bsmallmatrix}\adjun\begin{bsmallmatrix}0&\id_{2k}\\\id_{2k}&0\end{bsmallmatrix}\begin{bsmallmatrix}W_{B}\\W_C\end{bsmallmatrix}\leq0.\label{eq:col2}
  \end{equation}
\end{definition}
The existence of co-located outputs is established in \cite[Prop.~2.7]{ClemensGuReSkrepek25},
where a subsequent discussion also provides their physical interpretation. It is further shown there that such outputs give rise to a system satisfying an energy balance, where the inner product of the input and output corresponds to the power supplied to the system.
Our transmission line model can now be viewed from a systems-theoretic perspective as a system featuring two classes of inputs and outputs:
the external input together with the distributed current along the cable, and the external output accompanied by the corresponding distributed voltages.

\subsection{The electromagnetic field}\label{sec:max}

For the domain $\Omega \subset \R^3$ introduced in \Cref{sec:Omega},
the evolution of the electromagnetic field in the linear case is governed by Maxwell’s equations
\begin{equation}\label{eq:Maxwell}
   \frac{\partial}{\partial t}
   \begin{pmatrix} \bm{B}(t,\xi)\\ \bm{D}(t,\xi)\end{pmatrix}
   =
   \begin{bmatrix}0 & -\rot \\ \rot &-\bm{\sigma}(\xi)\end{bmatrix}
   \begin{pmatrix}\bm{\mu}(\xi)^{-1}\bm{B}(t,\xi) \\ \bm{\epsilon}(\xi)^{-1}\bm{D}(t,\xi)\end{pmatrix}
 \end{equation}
for the $\C^3$-valued physical quantities
\smallskip
\begin{center}
 \begin{tabular}{rl}
 $\bm{B}$: & magnetic flux density, \\
 $\bm{D}$: & electric flux density. \\
 \end{tabular}
\end{center}
Here, $\bm{\mu}\colon \Omega \to \C^{3\times 3}$ denotes the magnetic permeability, 
$\bm{\epsilon}\colon \Omega \to \C^{3\times 3}$ the electric permittivity, 
and $\bm{\sigma}\colon \Omega \to \C^{3\times 3}$ the electric conductivity.  
The quantities
\[
  \bm{H}(t,\xi) \coloneq \bm{\mu}(\xi)^{-1}\bm{B}(t,\xi),
  \quad
  \bm{E}(t,\xi) \coloneq \bm{\epsilon}(\xi)^{-1}\bm{D}(t,\xi)
\]
are referred to as the magnetic and electric field intensities, respectively.  
Typically, Maxwell’s equations are complemented by the constraints
\[
  \div \bm{B}(t,\xi) = 0,
  \qquad
  \div \bm{D}(t,\xi) = \bm{\rho}(t,\xi),
\]
where $\bm{\rho}(t,\argdot)\colon \Omega \to \C$ is a scalar field representing the charge density at time~$t$.  
Since $\div \rot = 0$, these divergence relations are preserved in time and can therefore be imposed through the initial conditions; consequently, they will be omitted.

Our assumptions on the corresponding parameters are as follows.

\begin{assumption}[Maxwell's equations - parameters]\label{ass:Maxwell}
  We assume that $\bm{\epsilon},\bm{\mu},\bm{\sigma}\in \Lp{\infty}(\Omega;\C^{3\times 3})$ with
  $\bm{\epsilon}^{-1},\bm{\mu}^{-1}\in \Lp{\infty}(\Omega;\C^{3\times 3})$ and
  \[
    \bm{\epsilon}(\xi)>0,\;\bm{\mu}(\xi)>0,\;\bm{\sigma}(\xi)+\bm{\sigma}(\xi)\adjun\geq 0
    \quad\text{for almost all}\quad \xi\in \Omega.
  \]
\end{assumption}

\noindent
The system is provided with an initial condition $\bm{B}(0)=\bm{B}_0$, $\bm{D}(0)=\bm{D}_0$ for some given $\bm{B}_0,\bm{D}_0\colon \Omega\to\C^3$.  To ensure a physically complete description, it is necessary to define appropriate boundary conditions.

The geometry of the domain $\Omega$ implies that its boundary $\partial\Omega$ 
is the disjoint union of the sub-boundaries 
$\partial\Omega_{0}, \partial\Omega_{1}, \ldots, \partial\Omega_{k}$.  
Each cable domain $\Omega_i$ has a boundary composed of its lateral surface 
$\Gamma_{i,\textup{lat}}$ and its two end surfaces 
$\Gamma_{i,\textup{end}}$, $i=1,\ldots,k$.  
Consequently,
\begin{align*}
  \partial\Omega
    = \partial\Omega_0 \cupdot \Gamma_{\textup{end}} \cupdot \Gamma_{\textup{lat}},
    \qquad
    \Gamma_{\textup{end}} \coloneq \bigcupdot_{i=1}^{k}\Gamma_{i,\textup{end}},
    \quad
    \Gamma_{\textup{lat}} \coloneq \bigcupdot_{i=1}^{k}\Gamma_{i,\textup{lat}}.
\end{align*}
\noindent
We next specify boundary conditions on $\partial\Omega_0$, 
$\Gamma_{\textup{end}}$, and $\Gamma_{\textup{lat}}$.  
To this end, let $\nu(\xi)\in\R^{3}$ denote the outward unit normal, 
which exists for almost every $\xi\in\partial\Omega$ 
(since $\Omega$ is a Lipschitz domain).  
Hence, $\nu \in \Lp{\infty}(\partial\Omega;\C^{3})$.  
For almost every $\xi\in\partial\Omega$, we define the orthogonal projection on the tangent space, which reads
\begin{align*}
  \pi_\tau(\xi)\colon \C^{3}\to\C^{3}, \qquad
  w \mapsto (\nu(\xi)\times w)\times \nu(\xi).
\end{align*}

\subsubsection{Boundary conditions at the computational domain}
In the following analysis, we impose the boundary condition
\begin{equation}
  \pi_\tau(\argdot) \big(\bm{\epsilon}(\argdot)^{-1}\bm{D}(t,\argdot)\big)\big\vert_{\partial\Omega_0} = 0, \qquad t \ge 0,
\label{eq:bndcondcomp}\end{equation}
which corresponds to perfect electrical insulation outside $\Omega_0$.  
Alternatively, superconductivity in the exterior region can be modeled by
\begin{equation*}
  \nu(\argdot) \times\big(\bm{\mu}(\argdot)^{-1}\bm{B}(t,\argdot)\big)\big\vert_{\partial\Omega_0} = 0, \qquad t \ge 0,
\end{equation*}
or by employing the \emph{Leontovich boundary conditions}~\cite{Leo44}, 
which relate the tangential components of the electric and magnetic fields.  
An analogous analysis could be carried out for these cases, 
but this is not pursued here.

\subsubsection{Boundary conditions at the cover surfaces of the cable}
We assume that the electric field is polarized normally to the cable end surfaces, which leads to
\begin{equation}
  \pi_\tau(\argdot)(\bm{\epsilon}(\argdot)^{-1}\bm{D}(t,\argdot))\big\vert_{\Gamma_{\textup{end}}} = 0,\qquad t\geq0.
\label{eq:bndcondend}\end{equation}

\subsection{Coupling - transmission line and lateral cable surfaces}\label{sec:coupling-idea}

The lateral cable surfaces serve as the coupling interface between the external electromagnetic field, governed by Maxwell’s equations~\eqref{eq:Maxwell}, and the transmission line, described by the telegrapher’s equations~\eqref{eq:teleq}. 

\noindent The coupling between the transmission-line current and the magnetic field intensity follows from Ampère’s law, relating the line integral of the magnetic field along a closed contour encircling the conductor to the enclosed current. For $i=1,\ldots,k$,
\[
  \forall\, \eta \in [0,1] \; : \;
  \oint_{\alpha_i(\eta)+\beta_{i\eta}}
  \underbrace{\bm{\mu}(\xi)^{-1}\bm{B}(t,\xi)}_{=\mathrlap{\bm{H}(t,\xi)}} \mathclose{}
  \cdot \dx[\bm{s}(\xi)]
  = -\bm{I}_{i,\ext}(t,\eta),
  \]
where $\bm{I}_{i,\ext}$ is the $i$th component of $\bm{I}_{\ext}(t,\eta)$. Now using that the above integral only incorporated the tangential component of the magnetic field, we see that Amp{\`e}re's law is equivalent to, for $i=1,\ldots,k$, 
\begin{equation}
  \forall \, \eta\in[0,1] \; : \; 
  \oint_{\alpha_{i}(\eta) + \beta_{i\eta}} \mspace{-8mu} \big(\nu(\xi)\times \big(\bm{\mu}(\xi)^{-1}\bm{B}(t,\xi)\big)\big)\times \nu(\xi) \cdot \dx[\bm{s}(\xi)]
  = -\bm{I}_{i,\ext}(t,\eta).
  \label{eq:magcouple}
\end{equation}

\noindent The coupling between the external voltage variable $\bm{E}_{\ext}$ of the transmission line
and the electric field on the cable surface relies on the polarization assumptions
that the electric field is tangentially aligned with the cable axis so that
its line integral along a longitudinal path equals the voltage drop along the line,
and that it vanishes in the perpendicular direction.  
According to the results in \cite[Sec.~4]{ClemensGuReSkrepek25}, 
this leads, for $i = 1,\ldots,k$, to
\begin{equation}
  \pi_\tau(\Phi_i(\eta,\theta))\,
  \underbrace{\bm{\epsilon}(\Phi_i(\eta,\theta))^{-1}\bm{D}(t,\Phi_i(\eta,\theta))}_{=\mathrlap{\bm{E}(t,\Phi_{i}(\eta,\theta))}}
  = \nabla\Phi_i(\eta,\theta)^{\dagger}
  \begin{pmatrix}
    \bm{E}_{i,\ext}(t,\eta)\\ 0
  \end{pmatrix}.
  \label{eq:elcouple}
\end{equation}

\section{Analysis of the coupling conditions}\label{sec:analysis-of-port-operator}

\noindent 
The coupling relations \eqref{eq:magcouple} and \eqref{eq:elcouple} between the tangential traces of the electromagnetic fields and the external quantities $\bm{I}_{\ext}$ and $\bm{E}_{\ext}$ can be expressed by the operators $\portOp_\textup{mag}$ and $\portOp_\textup{el}$ 
via
 \begin{equation}
 \bm{I}_{\ext}(t,\argdot) = \portOp_\textup{mag} \big(\nu(\argdot) \times \bm{\mu}(\argdot)^{-1} \bm{B}(t,\argdot)\big), \quad \bm{E}_{\ext}(t,\argdot) = \portOp_\textup{el} \big(\pi_\tau(\argdot) \bm{\epsilon}(\argdot)^{-1} \bm{D}(t,\argdot)\big).\label{eq:couplcondop}
 \end{equation}
In this section, we provide the functional-analytic framework required for a rigorous treatment of these coupling conditions.  
We first introduce the boundary trace spaces for the electric and magnetic fields on the cable surfaces and then analyze the operators $\portOp_\textup{mag}$ and $\portOp_\textup{el}$, clarifying their mutual relation.

\subsection{Tangential boundary trace spaces}

As mentioned in \Cref{sec:Omega}, the spatial domain where the electromagnetic field evolves is represented as $\Omega = \Omega_{0} \setminus \bigcup_{i=1}^{k} \cl{\Omega_{i}}$, and the boundary is divided into
$\partial \Omega =   
  \Gamma_{\textup{lat}} \cupdot \Gamma_{\textup{r}}$,
where $\Gamma_{\textup{r}} = \Gamma_{\textup{end}} \cupdot \Gamma_{\textup{ext}}$ with
\begin{align*}
  \Gamma_{\textup{lat}} = \bigcup_{i=1}^{k} \Gamma_{i,\textup{lat}},\quad
  \Gamma_{\textup{end}} = \bigcup_{i=1}^{k} \Gamma_{i,\textup{end}}, \quad
  \Gamma_{\textup{ext}} = \partial\Omega_{0},
\end{align*}
i.e., $\Gamma_{\textup{lat}}$ is the union of the lateral surfaces of the cables, $\Gamma_{\textup{end}}$ is the union of the end faces of the cables, $\Gamma_{\textup{ext}}$ is the exterior boundary of the computational domain and $\Gamma_{\textup{r}}$ is the rest of boundary from the perspective of $\Gamma_{\textup{lat}}$, see \Cref{fig:domain}.
Motivated by the zero tangential boundary conditions for the electric field on $\Gamma_{\textup{r}}$, we introduce the space of smooth functions with compact support not intersecting with $\Gamma_r$, i.e.,
\begin{align*}
  \Cc_{\Gamma_{\textup{r}}}(\Omega) \coloneq \dset*{f \in \conC^{\infty}(\cl{\Omega})}{\supp f \subseteq \cl{\Omega}\text{ compact with } \cl{\Gamma_{\textup{r}}} \cap \supp f =\emptyset}.
\end{align*}
Hence, $f \in \Cc_{\Gamma_{\textup{r}}}(\Omega)$ is always zero on $\Gamma_{\textup{r}}$, but can be non-zero on $\partial\Omega \setminus \Gamma_{\textup{r}}$, and let $\Cc(\Omega)\coloneq \Cc_{\partial\Omega}(\Omega)$. As for the other function spaces in this work, we append ``$; \C^m$'' to denote the space of $\C^m$-valued functions being component-wise within these spaces.
Also recall the Sobolev space for the distributional $\rot$-operator, namely
\begin{align*}
  \Hspace(\rot,\Omega) = \dset{\bm{E} \in \Lp{2}(\Omega;\C^3)}{\rot\bm{E} \in \Lp{2}(\Omega;\C^3)},
\end{align*}
which is endowed with the inner product
\begin{align*}
  \scprod{\bm{E}}{\bm{H}}_{\Hspace(\rot,\Omega)} \coloneq \scprod{\bm{E}}{\bm{H}}_{\Lp{2}(\Omega;\C^3)} + \scprod{\rot \bm{E}}{\rot \bm{H}}_{\Lp{2}(\Omega;\C^3)}.
\end{align*}
We define a subspace of $\Hspace(\rot,\Omega)$ that has homogeneous boundary conditions on $\Gamma_{\textup{r}}$ by the closure of $\Cc_{\Gamma_{\textup{r}}}(\Omega)$ with respect to the norm in $\Hspace(\rot,\Omega)$, i.e.,
\begin{align*}
  \cH_{\Gamma_{\textup{r}}}(\rot,\Omega) \coloneq \cl[\Hspace(\rot,\Omega)]{\Cc_{\Gamma_{\textup{r}}}(\Omega; \C^3)}
\end{align*}
Motivated by the boundary conditions discussed in \Cref{sec:max} for Maxwell's equations, we consider two specific types of boundary trace operators, namely
\begin{align*}  
  \tantr\colon\!\!
  \mapping{\conC^{\infty}(\cl{\Omega};\C^3)}{\Lp{2}(\partial\Omega;\C^3)}{\bm{E} }{\big(\nu \times \big(\bm{E}\big\vert_{\partial\Omega}\big)\big) \times \nu,}
  \;
  \tanxtr\colon\!\!
  \mapping{\conC^{\infty}(\cl{\Omega};\C^3)}{\Lp{2}(\partial\Omega;\C^3)}{\bm{H}}{\nu \times \big(\bm{H}\big\vert_{\partial\Omega}\big),}
\end{align*}
where $\nu\in \Lp{\infty}(\partial\Omega;\C^3)$ pointwisely refers to the outward normal vector on $\partial \Omega$.
To properly define the boundary spaces, one would need to introduce several technical concepts that could make the presentation less accessible.  
To keep the exposition concise, we therefore adopt a simplified approach.  
The downside is that certain structural aspects of the coupling are not immediately apparent in its operator-theoretic representation, since its domain cannot be characterized explicitly without introducing these additional spaces.
The integration by parts formula for the $\rot$ operator
\cite[Thm.~2.11]{GiraRavi86} gives
for all $\bm{E} \in \Cc_{\Gamma_{\textup{r}}}(\Omega;\C^3)$, $\bm{H} \in \conC^{\infty}(\cl{\Omega};\C^3)$,
\begin{align}\label{eq:int-by-parts-with-boundary-conditions}
  \scprod{\rot \bm{E}}{\bm{H}}_{\Lp{2}(\Omega;\C^3)} - \scprod{\bm{E}}{\rot \bm{H}}_{\Lp{2}(\Omega;\C^3)} &= \scprod{\tanxtr\bm{E}}{\tantr \bm{H}}_{\Lp{2}(\Gamma_{\textup{lat}};\C^3)}
\end{align}
Now it is possible to extend this integration by parts formula for $\bm{E}, \bm{H} \in \Hspace(\rot,\Omega)$ with suitable boundary spaces.
However, the left-hand-side of~\eqref{eq:int-by-parts-with-boundary-conditions} is well defined for $\bm{E} \in \cH_{\Gamma_{\textup{r}}}(\rot, \Omega)$ and $\bm{H} \in \Hspace(\rot,\Omega)$. Hence, we extend the right-hand-side $\scprod{\tanxtr\bm{E}}{\tantr \bm{H}}_{\Lp{2}(\Gamma_{\textup{lat}};\C^3)}$ just by the left-hand-side of~\eqref{eq:int-by-parts-with-boundary-conditions} and denote it as
\begin{align*}
  \dualprod{\tanxtr \bm{E}}{\tantr \bm{H}}_{\Vtaud,\Vtau} \coloneq
  \scprod{\rot \bm{E}}{\bm{H}}_{\Lp{2}(\Omega;\C^3)} - \scprod{\bm{E}}{\rot \bm{H}}_{\Lp{2}(\Omega;\C^3)}.
\end{align*}
We will regard the left-hand-side of the previous equation just as symbol, although it possible to define spaces $\Vtau$ and $\Vtaud$, and extensions of $\tantr$ and $\tanxtr$ to $\Hspace(\rot,\Omega)$ such that this is really a dual pairing, see e.g., \cite{BuCoSh02} or \cite{Sk21} for a more general approach.
We further define $\dualprod{\tantr \bm{H}}{\tanxtr \bm{E}}_{\Vtau,\Vtaud} = \conj{\dualprod{\tanxtr \bm{E}}{\tantr \bm{H}}_{\Vtaud,\Vtau}}$, which gives
\begin{equation*}
  \dualprod{\tantr \bm{H}}{\tanxtr \bm{E}}_{\Vtau,\Vtaud}
  = \scprod{\bm{H}}{\rot \bm{E}}_{\Lp{2}(\Omega;\C^3)} - \scprod{\rot \bm{H}}{\bm{E}}_{\Lp{2}(\Omega;\C^3)}.  
\end{equation*}

\begin{definition}
Suppose that the spatial domains are as in \Cref{sec:Omega}.
  Let $f \in \Lp{2}(\Gamma_{\textup{lat}};\C^3)$ and $\bm{E} \in \cH_{\Gamma_{\textup{r}}}(\rot,\Omega)$. We say $\tantr \bm{E} = f$ on $\Gamma_{\textup{lat}}$, if
  \begin{align*}
    \dualprod{\tantr \bm{E}}{\tanxtr \bm{H}}_{\Vtau,\Vtaud} = \scprod{f}{\tanxtr \bm{H}}_{\Lp{2}(\Gamma_{\textup{lat}})}
    \quad\text{for all}\quad
    \bm{H} \in \conC^{\infty}(\cl{\Omega};\C^{3}),
  \end{align*}
  i.e., $f$ is the weak $\Lp{2}$ representative of $\tantr \bm{E}$.
\end{definition}

\noindent A concise overview of these boundary spaces is provided in the appendix of \cite{SkWa24}.
Since their detailed structure is not required in the present work, it is not discussed here in detail.






\subsection{The port operators}\label{sec:portop}

Next we provide a rigorous functional-analytic treatment of the port operators $\portOp_\textup{mag}$ and $\portOp_\textup{el}$, which were previously introduced in \Cref{sec:coupling-idea} at a formal level. Namely, for the parameterization $\Phi_i$ of the
lateral boundary $\Gamma_{i,\textup{lat}}$ of the $i$th cable (see \eqref{eq:Gammalat}), we denote the inverse by $\Psi_i$. That is,
\begin{align*}
\Psi_{i}\colon
\mapping{\Gamma_{i,\textup{lat}}}{[0,1]\times \lparen -\uppi,\uppi \rbrack}{\Phi(\eta,\theta)}{(\eta,\theta),}
\qquad\Psi_{i1}\colon
\mapping{\Gamma_{i,\textup{lat}}}{[0,1]}{\Phi(\eta,\theta)}{\eta.}
\end{align*}
We
introduce the operators
\begin{align*}
  \portOp_{i,\textup{mag}} \colon&\;
  \mapping{\Lp{2}(\Gamma_{i,\textup{lat}};\C^{3})}{\Lp{2}((0,1);\C)}{g}{%
    \bigg(\eta \mapsto\displaystyle \vphantom{\oint}\smash{\oint\limits_{\mathclap{\alpha_{i}(\eta) + \beta_{i\eta}}}} \mspace{8mu} g \times \nu \cdot \dx[s]\bigg),}
    \vphantom{\mapping{\Lp{2}\C^{3}}{\Lp{2}}{}{\displaystyle\oint\limits_{\beta_{i\eta}}}}\\[3mm]
  \portOp_{i,\textup{el}} \colon&\;
  \mapping{\Lp{2}((0,1);\C)}{\Lp{2}(\Gamma_{i,\textup{lat}};\C^{3})}{f}{(\grad \Phi_{i}\circ\Psi_i)^{\dagger}
    \begin{pmatrix}
      f \circ \Psi_{i1}\\
      0
    \end{pmatrix}.
  }\end{align*}
The coupling conditions \eqref{eq:magcouple} and \eqref{eq:elcouple} can now be reformulated to \eqref{eq:couplcondop}
via
\begin{align}
  \portOp_{\textup{mag}} \colon&\;
  \mapping{\Lp{2}(\Gamma_{\textup{lat}};\C^{3})}{\Lp{2}((0,1);\C^k)}{g}{%
    \begin{pmatrix}
        \portOp_{1,\textup{mag}}\Big(\left.g\right|_{\Gamma_{1,\textup{lat}}}\Big)\\\vdots\\\portOp_{k,\textup{mag}}\Big(\left.g\right|_{\Gamma_{k,\textup{lat}}}\Big)
    \end{pmatrix},}\label{eq:magop}\\[4mm]
  \portOp_{\textup{el}} \colon&\;
  \mapping{\Lp{2}((0,1);\C^k)}{\Lp{2}(\Gamma_{\textup{lat}};\C^{3})}{\begin{pmatrix}f_1\\\vdots\\f_k\end{pmatrix}}{%
 g \quad \text{with} \quad g\big|_{\Gamma_{i,\textup{lat}}} = \portOp_{i,\textup{el}}f_i \quad\forall\, i=1,\ldots,k.}\label{eq:elop}
\end{align}
A key property for our subsequent analysis is that these two operators are adjoint to each other, a fact that was established in \cite{ClemensGuReSkrepek25} in the context of deriving an energy balance.
\begin{proposition}[{\cite[Prop.~4.2]{ClemensGuReSkrepek25}}]
The operators in \eqref{eq:magop} and \eqref{eq:elop} fulfill
\[\portOp_\textup{el}\adjun=\portOp_\textup{mag}.\]
\end{proposition}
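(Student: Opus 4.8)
The plan is to show that for all $f \in \Lp{2}((0,1);\C^k)$ and all $g \in \Lp{2}(\Gamma_{\textup{lat}};\C^3)$ one has the pairing identity
\begin{equation*}
  \scprod{\portOp_{\textup{el}} f}{g}_{\Lp{2}(\Gamma_{\textup{lat}};\C^3)}
  = \scprod{f}{\portOp_{\textup{mag}} g}_{\Lp{2}((0,1);\C^k)},
\end{equation*}
which—combined with the fact that both operators are everywhere-defined and bounded (each $\portOp_{i,\textup{mag}}$ is a bounded curve-integral/averaging operator and each $\portOp_{i,\textup{el}}$ involves the bounded pointwise pseudoinverse $(\grad\Phi_i\circ\Psi_i)^\dagger$ together with precomposition by the bi-Lipschitz chart $\Psi_i$)—immediately yields $\portOp_{\textup{el}}^{\adjunsymb} = \portOp_{\textup{mag}}$. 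Since both $\Gamma_{\textup{lat}}$ and the index set $\{1,\dots,k\}$ decompose as disjoint unions, and by the definitions \eqref{eq:magop}, \eqref{eq:elop} both operators act componentwise with $g|_{\Gamma_{i,\textup{lat}}}$ paired against the $i$th component, it suffices to prove the scalar identity $\scprod{\portOp_{i,\textup{el}} f_i}{g_i}_{\Lp{2}(\Gamma_{i,\textup{lat}};\C^3)} = \scprod{f_i}{\portOp_{i,\textup{mag}} g_i}_{\Lp{2}((0,1);\C)}$ for each fixed $i$, where $g_i = g|_{\Gamma_{i,\textup{lat}}}$.

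The key step is the change of variables from $\Gamma_{i,\textup{lat}}$ to the parameter rectangle $(0,1)\times(-\uppi,\uppi]$ via $\Phi_i$. Writing $J_i(\eta,\theta)$ for the (scalar) surface area element of $\Phi_i$ and using that $\nu\circ\Phi_i = \tfrac{1}{J_i}\,\partial_\eta\Phi_i \times \partial_\theta\Phi_i$ (up to orientation, normalized), the left-hand side becomes an integral over the rectangle of $(\grad\Phi_i)^\dagger \sbvek{f_i(\eta)}{0}$ against $g_i\circ\Phi_i$ weighted by $J_i$. The defining property of the Moore–Penrose inverse, $\scprod{(\grad\Phi_i)^\dagger a}{b} = \scprod{a}{(\grad\Phi_i^{\dagger})^{\adjunsymb} b}$ together with $((\grad\Phi_i)^\dagger)^{\adjunsymb} = ((\grad\Phi_i)^{\adjunsymb})^\dagger$, lets me move the pseudoinverse onto $g_i\circ\Phi_i$. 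The structure $\sbvek{f_i}{0}$ kills one of the two columns, so only the $\partial_\eta\Phi_i$-component survives: the inner $\theta$-integral collapses to $\int_{-\uppi}^{\uppi} f_i(\eta)\, \overline{\big[(\text{component of }(\grad\Phi_i)^{\dagger\adjunsymb} g_i\circ\Phi_i\text{ along }\partial_\eta\Phi_i)\big]}\, J_i \dd\theta$. The remaining factor, after unwinding the pseudoinverse and the area element, must be recognized precisely as the integrand of the contour integral $\oint_{\alpha_i(\eta)+\beta_{i\eta}} g_i \times \nu \cdot \dd s$, parameterized by $\theta$; this is where the geometric identification $\dd s = \partial_\theta\Phi_i\,\dd\theta$ on the circle $\beta_{i\eta}$ and the identity $(g\times\nu)\cdot \partial_\theta\Phi_i = $ (component extracted by $(\grad\Phi_i)^{\dagger\adjunsymb}$) enters.

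The main obstacle I expect is this last identification: carefully matching the Moore–Penrose pseudoinverse bookkeeping (which column of $\grad\Phi_i = [\partial_\eta\Phi_i \mid \partial_\theta\Phi_i]$ is selected, and with what normalization involving $r_i$, $l_i$, and the Gram determinant of $\grad\Phi_i$) against the intrinsic contour-integral integrand $g\times\nu\cdot\dd s$, and confirming that all Jacobian/area-element factors cancel exactly so that no spurious weight remains. One has to use essential injectivity of $\Phi_i$ (guaranteed by the curvature bound $\norm{\alpha_i''}<l_i^2/r_i$, as noted after \eqref{eq:Gammalat}) to ensure the change of variables is valid, and the $\conC^1$-regularity of $\kappa_{i1},\kappa_{i2}$ and of $\alpha_i$ to ensure $\grad\Phi_i$ and $J_i$ are well-defined a.e.\ with $J_i$ bounded away from $0$. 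Since this computation is exactly the one carried out in \cite[Sec.~4]{ClemensGuReSkrepek25} in the course of deriving the energy balance, I would either cite it directly or reproduce the few lines of the change-of-variables computation, the algebraic heart of which is the self-adjointness relation $\scprod{\portOp_{i,\textup{el}} f_i}{g_i} = \scprod{f_i}{\portOp_{i,\textup{mag}} g_i}$ obtained by transporting everything to the reference rectangle.
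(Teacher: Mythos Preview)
Your proposal is correct in approach, and in fact goes further than the paper itself: the paper does not prove this proposition at all but simply imports it from \cite[Prop.~4.2]{ClemensGuReSkrepek25}, as indicated in the statement header. Your sketch of the change-of-variables computation on each $\Gamma_{i,\textup{lat}}$, reducing the adjoint identity to matching the Moore--Penrose pseudoinverse term against the contour-integral integrand, is exactly the content of the cited result, and you even note at the end that citing it directly is an option---which is precisely what the paper does.
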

\noindent A schematic representation of the overall coupling structure is given in \Cref{fig:coupled-diagram_adj}.

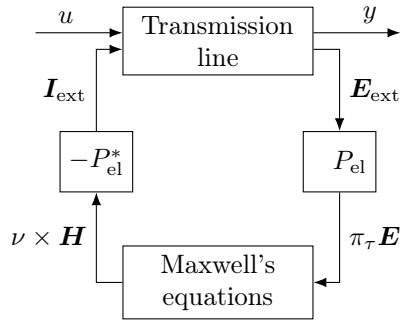
\begin{figure}[htbp]
  \centering
    \begin{tikzpicture}[scale=0.8]
      \tikzstyle{mynode1} = [rectangle, minimum width=2.5cm, minimum height=0.8cm, text centered, draw=black]
      \tikzstyle{mynode2} = [rectangle, minimum width=0.8cm, minimum height=0.8cm, text centered, draw=black]

      \coordinate (vh) at (0,-2); 
      \coordinate (vv) at (2,0); 

      \node[mynode1] (H3) at (0,0) {\parbox[c]{2cm}{\centering Transmission\\line}};
      \node[mynode1] (H1) at ($(H3) + 2*(vh)$)  {\parbox[c]{2cm}{\centering Maxwell's\\equations}};

      \node[mynode2] (A) at ($(H3) + (vh) + (vv)$) {$\phantom{-}\portOp_\textup{el}$};
      \node[mynode2] (B) at ($(H3) + (vh) - (vv)$) {$-\portOp_\textup{el}\adjun$};

      \draw[-Latex] (H1.180) -- (B |- H1.180) -- node[left] {$\nu\times\bm{H}$} (B);
      \draw[-Latex] (B) -- node[left] {$\bm{I}_{\ext}$}(B |- H3.185) -- (H3.185);  \draw[-Latex] (H3.-5) -- (A |- H3.-5) -- node[right] {$\bm{E}_{\ext}$} (A);
      \draw[-Latex] (A) -- node[right] {$\pi_\tau \bm{E}$} (A |- H1.0) -- (H1.0);
      \draw[-Latex] ($(H3|- H3.175) - (vv) + (-1,0)$) -- node[above] {$u$} (B|- H3.175) -- (H3.175);
      \draw[Latex-] ($(H3|- H3.5) + (vv) + (1,0)$) -- node[above] {$y$} (A|- H3.5) -- (H3.5);
    \end{tikzpicture}
    \caption{\label{fig:coupled-diagram_adj}Coupled ports}
\end{figure}

\subsection{Lifting of a \texorpdfstring{$\Lp{2}(0,1)$}{L2(0,1)} function to \texorpdfstring{$\Hspace(\rot,\Omega)$}{H(rot,Omega)}}

Note that the coupling operator $\portOp_{\textup{el}}$ maps into $\Lp{2}(\Gamma_{\textup{lat}})$, 
and we aim to impose the coupling condition $\portOp_{\textup{el}}\bm{E}_{\ext} = \tantr\bm{E}$.
However, the tangential trace operator $\tantr$ maps to a~space which is neither a~subset nor a~superset of $\Lp{2}(\Gamma_{\textup{lat}})$, see \cite{BuCoSh02}.
Hence, one must ensure that the coupling condition is well-defined, that is, 
for a given output $\bm{E}_{\ext}$ of the transmission line, there exists a field 
$\bm{E} \in \Hspace(\rot,\Omega)$ satisfying $\portOp_{\textup{el}}\bm{E}_{\ext} = \tantr\bm{E}$.

Note that the output $\bm{E}_{\ext} = \partial_{\eta} (\bm{C}^{-1}\bm{q}) = \partial_{\eta}\bm{V}$ employs the derivative of a~part of the state of the transmission lines.  
Furthermore, $\portOp_{\textup{el}}\bm{E}_{\ext}$ can be viewed as the result of applying the chain rule, 
representing the tangential derivative (or gradient) of $\bm{V} \circ \Psi$.  
This observation suggests that $\portOp_{\textup{el}}\bm{E}_{\ext}$ provides a natural candidate 
for extension from the boundary $\Gamma_{\textup{lat}}$ into the domain~$\Omega$.

Note that we can extend the paths $\alpha_{i}$ in both direction by an $\epsilon > 0$ to a path $\hat{\alpha}_{i} \in \conC^{2}((-\epsilon,1+\epsilon))$.
This allows us to extend $\Phi_{i}$ to a $\conC^{2}$ diffeomorphism (onto its image) by
\begin{equation}\label{eq:diffeomorphism-for-lifting}
  \hat{\Phi}_{i} \colon
  \mapping{(-\epsilon, 1 + \epsilon) \times (-\uppi,\uppi) \times (-\epsilon,\epsilon)}{\R^{3}}{(\eta,\theta,s)}{\hat{\alpha}_{i}(\eta) + (1+s) \beta_{i\eta}(\theta),}
\end{equation}
for $\epsilon > 0$ sufficiently small. We will denote the inverse of $\hat{\Phi}_{i}$ by
\begin{equation*}
  \hat{\Psi}_{i}\colon \ran \hat{\Phi}_{i} \to (-\epsilon,1 + \epsilon) \times (-\uppi,\uppi) \times (-\epsilon,\epsilon).
\end{equation*}
Clearly $\hat{\Psi}_{i} = \Psi_{i}$ on $\Gamma_{i,\textup{lat}}$. Hence,\footnote{In order to simplify notation we say $\bm{V}(\eta,\theta,s) \coloneq \bm{V}(\eta)$ to make compositions work.} (by the chain rule $(\grad \hat{\Phi}_{i})^{-1} \circ \hat{\Psi}_{i} = \grad \hat{\Psi}_{i}$)
\begin{align}\label{eq:lifting-by-gradient-field}
  \begin{split}
  \portOp_{i,\textup{el}} \bm{E}_{\ext}
  &= \portOp_{i,\textup{el}} \partial_{\eta}\bm{V}
  = (\grad \Phi_{i})^{\dagger}
  \begin{pmatrix} \partial_{\eta} \bm{V}  \\ 0 \end{pmatrix}
  \circ \Psi_{i}
  \\
  &= (\grad \hat{\Phi}_{i})^{-1}
  \begin{pmatrix} \partial_{\eta} \bm{V} \\ 0 \\ 0 \end{pmatrix} \circ \hat{\Psi}_{i}\big\vert_{\Gamma_{i,\textup{lat}}}
  = \grad (\bm{V} \circ \hat{\Psi}_{i})\big\vert_{\Gamma_{i,\textup{lat}}}.
  \end{split}
\end{align}
Note that $\grad (\bm{V} \circ \hat{\Psi}_{i})$ has vanishing $\rot$ as a gradient field. So in order to extend it from $\ran \hat{\Phi}_{i}$ we multiply it by a cut-off function $\chi_{i} \in \Cc(\ran \hat{\Phi}_{i})$ that is $1$ on $\Gamma_{i,\textup{lat}}$. Then we can extend $\chi_{i} \grad(\bm{V} \circ \hat{\Psi}_{i})$ by zero outside of $\ran \hat{\Phi}_{i}$ and obtain $\chi_{i} \grad(\bm{V} \circ \hat{\Psi}_{i}) \in \Hspace(\rot,\Omega)$ by the product rule for $\rot$.

\begin{lemma}\label{le:lifting-to-Hrot}
  The mapping
  \begin{equation*}
    \hat{\portOp}_{i,\textup{el}}\colon
    \mapping{\Hspace^{1}((0,1))}{\Hspace(\rot,\Omega)}{\bm{V}}{\chi_{i} \grad \hat{\Psi}_{i}
    \begin{pmatrix}\big(\partial_{\eta}\bm{V}\big) \circ \hat{\Psi}_{i} \\ 0 \\ 0\end{pmatrix}}
  \end{equation*}
  is bounded and satisfies $\hat{\portOp}_{i,\textup{el}} \bm{V} \big\vert_{\Gamma_{i,\textup{lat}}} = \portOp_{i,\textup{el}} \big(\partial_{\eta}\bm{V}\big)$. In particular, $\hat{\portOp}_{i,\textup{el}} \bm{V} \in \Hspace(\rot,\Omega)$ such that $\tantr \hat{\portOp}_{i,\textup{el}} \bm{V} = \portOp_{i,\textup{el}} \big(\partial_{\eta}\bm{V}\big)$.
\end{lemma}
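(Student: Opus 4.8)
The plan is to recognise $\hat{\portOp}_{i,\textup{el}}\bm{V}$ as the cut-off gradient field that already appears in~\eqref{eq:lifting-by-gradient-field}. For $\bm{V}\in\Hspace^{1}((0,1))$ — tacitly extended across the endpoints by a bounded $\Hspace^{1}$-extension matching $\hat{\alpha}_{i}$, a choice immaterial for the trace on $\Gamma_{\textup{lat}}$ — put $\bm{W}\coloneq\bm{V}\circ\hat{\Psi}_{i1}$, regarded as a function on $\ran\hat{\Phi}_{i}$ that is constant in the $\theta$- and $s$-coordinates. Since $\hat{\Phi}_{i}$ is a $\conC^{2}$-diffeomorphism, $\bm{W}\in\Hspace^{1}(\ran\hat{\Phi}_{i})$ with $\norm{\bm{W}}_{\Hspace^{1}(\ran\hat{\Phi}_{i})}\le c\,\norm{\bm{V}}_{\Hspace^{1}}$, and the footnote identity $\grad\hat{\Psi}_{i}=(\grad\hat{\Phi}_{i})^{-1}\circ\hat{\Psi}_{i}$ together with the chain rule gives
\[
  \hat{\portOp}_{i,\textup{el}}\bm{V}=\chi_{i}\,\grad\bm{W}\qquad\text{on }\ran\hat{\Phi}_{i},
\]
extended by $0$ to $\Omega\setminus\ran\hat{\Phi}_{i}$; here $\chi_{i}\in\Cc(\ran\hat{\Phi}_{i})$ is a fixed cut-off equal to $1$ on a neighbourhood of $\cl{\Gamma_{i,\textup{lat}}}$, and for $\epsilon$ small $\ran\hat{\Phi}_{i}$ is a thin tube straddling the lateral surface of the $i$th cable and extended slightly past its two ends, so that $\supp\chi_{i}$ automatically misses $\Gamma_{j,\textup{lat}}$ $(j\ne i)$ and all of $\cl{\Gamma_{\textup{r}}}$ apart from a collar of $\cl{\Gamma_{i,\textup{end}}}$. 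It then remains to prove boundedness into $\Hspace(\rot,\Omega)$ and the trace identity.

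Boundedness is the easy part, and this is where the ``gradient field'' structure is used. Being a gradient, $\grad\bm{W}\in\Lp{2}(\ran\hat{\Phi}_{i};\C^{3})$ has vanishing distributional $\rot$, so by the product rule $\rot(\chi_{i}\grad\bm{W})=\grad\chi_{i}\times\grad\bm{W}\in\Lp{2}(\ran\hat{\Phi}_{i};\C^{3})$. As $\chi_{i}\grad\bm{W}$ vanishes near $\partial(\ran\hat{\Phi}_{i})$, its extension by $0$ lies in $\Hspace(\rot,\Omega)$ with $\rot\hat{\portOp}_{i,\textup{el}}\bm{V}$ equal to the zero-extension of $\grad\chi_{i}\times\grad\bm{W}$; hence $\hat{\portOp}_{i,\textup{el}}\bm{V}\in\Hspace(\rot,\Omega)$ and $\norm{\hat{\portOp}_{i,\textup{el}}\bm{V}}_{\Hspace(\rot,\Omega)}\le\big(\norm{\chi_{i}}_{\infty}+\norm{\grad\chi_{i}}_{\infty}\big)\,\norm{\grad\bm{W}}_{\Lp{2}(\ran\hat{\Phi}_{i})}\le C\,\norm{\bm{V}}_{\Hspace^{1}((0,1))}$, which is the claimed bound.

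For the trace identity I would argue by density and continuity. First I would check that $\hat{\portOp}_{i,\textup{el}}\bm{V}$ has vanishing tangential trace on $\Gamma_{\textup{r}}$: near $\Gamma_{\textup{r}}\setminus\cl{\Gamma_{i,\textup{end}}}$ the field is identically zero by the choice of $\chi_{i}$, while on $\Gamma_{i,\textup{end}}$ the coordinate $\hat{\Psi}_{i1}$ is constant, so $\grad\bm{W}=\bm{V}'(\hat{\Psi}_{i1})\,\grad\hat{\Psi}_{i1}$ points along the cable axis, hence is normal to $\Gamma_{i,\textup{end}}$ and tangentially trivial there; consequently $\hat{\portOp}_{i,\textup{el}}\bm{V}\in\cH_{\Gamma_{\textup{r}}}(\rot,\Omega)$ and its weak $\Lp{2}$ trace on $\Gamma_{\textup{lat}}$ is defined. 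For $\bm{V}\in\conC^{\infty}([0,1])$ everything is classical: $\hat{\portOp}_{i,\textup{el}}\bm{V}$ is a smooth field and, using $\chi_{i}\equiv1$ near $\Gamma_{i,\textup{lat}}$ and $\supp\chi_{i}\cap\Gamma_{j,\textup{lat}}=\emptyset$ for $j\ne i$, the computation~\eqref{eq:lifting-by-gradient-field} identifies its tangential trace on $\Gamma_{\textup{lat}}$ with the $\Lp{2}(\Gamma_{\textup{lat}};\C^{3})$-function equal to $\portOp_{i,\textup{el}}(\partial_{\eta}\bm{V})$ on $\Gamma_{i,\textup{lat}}$ and $0$ elsewhere. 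Both sides of this equality depend continuously on $\bm{V}\in\Hspace^{1}((0,1))$ — the left via the bound above and continuity of the weak trace on $\cH_{\Gamma_{\textup{r}}}(\rot,\Omega)$, the right because $\portOp_{i,\textup{el}}$ (pull-back by $\Psi_{i1}$ followed by multiplication with the continuous matrix field $(\grad\Phi_{i}\circ\Psi_{i})^{\dagger}$) and $\partial_{\eta}\colon\Hspace^{1}((0,1))\to\Lp{2}((0,1))$ are bounded. By density of $\conC^{\infty}([0,1])$ in $\Hspace^{1}((0,1))$ the identity then extends to all $\bm{V}\in\Hspace^{1}((0,1))$, giving $\tantr\hat{\portOp}_{i,\textup{el}}\bm{V}=\portOp_{i,\textup{el}}(\partial_{\eta}\bm{V})$ on $\Gamma_{\textup{lat}}$; the displayed $\hat{\portOp}_{i,\textup{el}}\bm{V}\big\vert_{\Gamma_{i,\textup{lat}}}=\portOp_{i,\textup{el}}(\partial_{\eta}\bm{V})$ is its pointwise-a.e.\ reading on $\Gamma_{i,\textup{lat}}$.

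I expect the genuine difficulty to lie in this last step rather than in boundedness: since $\hat{\portOp}_{i,\textup{el}}\bm{V}$ is merely $\Lp{2}$ (not $\Hspace^{1}$) near $\Gamma_{\textup{lat}}$, its boundary value is not a literal restriction and one has to work through the weak $\Lp{2}$-trace framework; the delicate bookkeeping is all at the cable ends — arranging the cut-off (and the extension of $\bm{V}$) so that the smooth approximants and their limit genuinely lie in $\cH_{\Gamma_{\textup{r}}}(\rot,\Omega)$, which hinges on $\grad(\bm{V}\circ\hat{\Psi}_{i1})$ being axial, hence tangentially trivial, on $\Gamma_{i,\textup{end}}$ — and the passage to the limit in the classical identity~\eqref{eq:lifting-by-gradient-field}. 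Everything else reduces to the product and chain rules.
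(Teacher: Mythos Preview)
Your argument is correct and follows essentially the same route as the paper: recognise $\hat{\portOp}_{i,\textup{el}}\bm{V}=\chi_{i}\grad(\bm{V}\circ\hat{\Psi}_{i})$, use $\rot\grad=0$ and the product rule to get $\rot(\chi_{i}\grad\bm{W})=\grad\chi_{i}\times\grad\bm{W}$ for the $\Hspace(\rot,\Omega)$-bound, and invoke \eqref{eq:lifting-by-gradient-field} for the trace identity. The paper's proof is terser---it writes the norm estimate directly and cites \eqref{eq:lifting-by-gradient-field} without a density step---whereas you spell out the smooth case and pass to the limit, and you also verify membership in $\cH_{\Gamma_{\textup{r}}}(\rot,\Omega)$ (needed for the weak trace and used implicitly later, e.g.\ in \Cref{th:boundary-mappings-surjective}); this extra care is justified but not a different method.
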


\begin{proof}
  The equality $\hat{\portOp}_{i,\textup{el}} \bm{V} \big\vert_{\Gamma_{i,\textup{lat}}} = \portOp_{i,\textup{el}} \partial_{\eta} \bm{V}$ follows from \eqref{eq:lifting-by-gradient-field} and the boundedness from
  \begin{align*}
    \norm{\chi_{i} \grad (\bm{V} \circ \hat{\Psi}_{i})}_{\Hspace(\rot,\Omega)}^{2}
    &= \norm{\chi_{i} \grad (\bm{V} \circ \hat{\Psi}_{i})}_{\Lp{2}(\Omega)}^{2} + \norm{\grad \chi_{i} \times \grad(\bm{V} \circ \hat{\Psi}_{i})}_{\Lp{2}(\Omega)}^{2}
    \\
    &\leq (\norm{\chi_{i}}_{\infty}^{2} + \norm{\grad \chi_{i}}_{\infty}^{2}) \norm{\grad (\bm{V}\circ\hat{\Psi}_{i})}_{\Lp{2}(\ran \hat{\Phi}_{i})}^{2}
    \\
    &\leq C_{i}(\norm{\chi_{i}}_{\infty}^{2} + \norm{\grad \chi_{i}}_{\infty}^{2}) \norm{\grad \hat{\Psi}_{i}}_{\infty}^{2}\norm{\partial_{\eta} \bm{V}}_{\Lp{2}((0,1))}^{2}
    \\
    &\leq C_{i}(\norm{\chi_{i}}_{\infty}^{2} + \norm{\grad \chi_{i}}_{\infty}^{2}) \norm{\grad \hat{\Psi}_{i}}_{\infty}^{2}\norm{\bm{V}}_{\Hspace^{1}((0,1))}^{2},
  \end{align*}
  where $C_{i}$ is a suitable constant that corresponds to the cylindrical coordinates imposed by $\hat{\Phi}_{i}$
\end{proof}

\section{Analysis of the coupled system}\label{sec:coupling-operator-theory}

\subsection{Formulation as an infinite-dimensional system}
For our coupled system we regard the {\em state} $x(t)$ and {\em effort} $e(t)$, which are given by
\begin{subequations}
\label{eq:gensetup}
\begin{align}\label{eq:stateeffortH}
  e(t) =
  \begin{pmatrix}
    \bm{I}(t,\argdot) \\ \bm{H}(t,\argdot) \\ \bm{V}(t,\argdot) \\ \bm{E}(t,\argdot)
  \end{pmatrix}
  =
  \underbrace{
  \begin{bmatrix}
  \bm{L}(\argdot)^{-1} & 0 & 0 & 0 \\
  0 & \bm{\mu}(\argdot)^{-1} & 0 & 0 \\
  0 & 0 & \bm{C}(\argdot)^{-1} & 0 \\
  0 & 0 & 0 & \bm{\epsilon}(\argdot)^{-1}
  \end{bmatrix}
  }_{=\mathrlap{\hamiltonian}}
  \underbrace{
  \begin{pmatrix}
    \bm{\psi}(t,\argdot) \\ \bm{B}(t,\argdot) \\ \bm{q}(t,\argdot) \\ \bm{D}(t,\argdot)
  \end{pmatrix}
  }_{=\mathrlap{x(t)}},
\end{align}
and evolve in the
\emph{state space}
\begin{equation}
  \X =
  \begin{bmatrix}
    \Lp{2}((0,1);\C^{k}) \\
    \Lp{2}(\Omega;\C^{3}) \\
    \Lp{2}((0,1);\C^{k}) \\
    \Lp{2}(\Omega;\C^{3})
  \end{bmatrix}_{\times}\label{eq:Xspace}
\end{equation}
endowed with the canonical inner product. Then, by using \Cref{ass:tl} and \Cref{ass:Maxwell}, $\hamiltonian$ defines a~bounded and bijective operator on $\X$ via pointwise multiplication, and a bounded {\em damping operator}
\begin{equation}\label{eq:frakR}
\mathfrak{R}=  \begin{bmatrix}
    \bm{R}(\argdot) & \phantom{-}0\phantom{-} &\phantom{-}0\phantom{-} & \phantom{-}0\phantom{-}\\
    \phantom{-}0\phantom{-} & 0 & 0 & 0 \\
    0 & 0 & \bm{G}(\argdot) & 0 \\
    0 & 0 & 0 & \bm{\sigma}(\argdot)
  \end{bmatrix}.
\end{equation}
\end{subequations}

\noindent In order to give a vague understanding of our goals we are a little bit imprecise in the following few lines.
Our coupled system is defined by the differential operator (specifically, its operator closure)
\begin{align*}
  \mathfrak{J} &=
  \begin{bmatrix}
    0 & 0 & -\tfrac{\dd}{\dd \eta} & 0 \\
    0 & 0 & 0 & -\rot \\
    -\tfrac{\dd}{\dd \eta} & \tfrac{\dd}{\dd \eta}\portOp_{\textup{el}}\adjun \tanxtr & 0 & 0 \\
    0 & \rot & 0 & 0
  \end{bmatrix},
  \\
  \dom \mathfrak{J} &=
  \dset*{
  \begin{pmatrix}
    \bm{I} \\ \bm{H} \\ \bm{V} \\ \bm{E}
  \end{pmatrix}
           \in
  \begin{bmatrix}
    \Hspace^{1}((0,1);\C^{k}) \\
    \conC^{\infty}(\cl{\Omega};\C^3) \\
    \Hspace^{1}((0,1);\C^{k}) \\
    \cH_{\Gamma_{\textup{r}}}(\rot,\Omega)
  \end{bmatrix}_{\times}
  }{
  \tantr \bm{E} = \portOp_{\textup{el}} \tfrac{\dd}{\dd \eta}\bm{V} \ \text{on}\ \Gamma_{\textup{lat}}
                      }
\end{align*}
through the (misleadingly appearing as autonomous) differential equation
\begin{align*}
  \dot{x} = (\mathfrak{J}-\mathfrak{R})\hamiltonian x.
\end{align*}
Note that the boundary condition $\tantr \bm{E} \big\vert_{\Gamma_{\textup{r}}} = 0$ (which comprises \eqref{eq:bndcondcomp} and \eqref{eq:bndcondend})
is encoded in $\bm{E} \in \Cc_{\Gamma_{\textup{r}}}(\Omega;\C^3)$. The operator closure does not alter this tangential boundary condition, since the tangential trace is continuous w.r.t.\ $\norm{\argdot}_{\Hspace(\rot,\Omega)}$.

Let two efforts $e^i = \big(\bm{I}^i, \bm{H}^i, \bm{V}^i, \bm{E}^i\big)$, $i = 1,2$, be given 
(we use row-vector notation for layout reasons).  
Using \eqref{eq:input1}, the corresponding total currents are defined as 
$\bm{I}^i_{\textup{tot}} \coloneq \bm{I}^i - \portOp_{\textup{el}}^{*}\bm{H}^i$.  
Our next objective is to show that a boundary triple (see \Cref{def:boundary-triple}) can be associated with the operator~$\mathfrak{J}$.  
In particular, for $e^1, e^2 \in \dom \mathfrak{J}$, partitioned and denoted as in \eqref{eq:stateeffortH},  
we aim to establish the abstract Green identity
\begin{align*}
  \MoveEqLeft[3]
  \scprod{\mathfrak{J}e^{1}}{e^{2}}_\X + \scprod{e^{1}}{\mathfrak{J}e^{2}}_\X \\
  &= -\scprod{\bm{V}^{1}(1)}{\bm{I}^{2}_{\textup{tot}}(1)}_{\C^k} + \scprod{\bm{V}^{1}(0)}{\bm{I}^{2}_{\textup{tot}}(0)}_{\C^k}
  \\ &\qquad\quad
  - \scprod{\bm{I}^{1}_{\textup{tot}}(1)}{\bm{V}^{2}(1)}_{\C^k} + \scprod{\bm{I}^{1}_{\textup{tot}}(0)}{\bm{V}^{2}(0)}_{\C^k} \\
  &= \scprod*{\begin{pmatrix}\bm{V}^{1}(0) \\ -\bm{V}^{1}(1)\end{pmatrix}}{\begin{pmatrix}\bm{I}^{2}_{\textup{tot}}(0) \\ \bm{I}^{2}_{\textup{tot}}(1)\end{pmatrix}}_{\C^{2k}}
  + \scprod*{\begin{pmatrix}\bm{I}^{1}_{\textup{tot}}(0) \\ \bm{I}^{1}_{\textup{tot}}(1)\end{pmatrix}}{\begin{pmatrix}\bm{V}^{2}(0) \\ -\bm{V}^{2}(1)\end{pmatrix}}_{\C^{2k}}.
\end{align*}


\subsection{Boundary triples and the semigroup property}
In order to properly define boundary conditions that admit existence and uniqueness of solutions we analyze the blocks of the operator $\mathfrak{J}$. We define
\begin{subequations}
    \label{eq:Jdef}\begin{equation}
\begin{aligned}
  \fA_{1}
  &=
  \begin{bmatrix}
    -\frac{\dd}{\dd \eta} & \frac{\dd}{\dd \eta} \portOp_{\textup{el}}\adjun \tanxtr \\
    0 & \rot
  \end{bmatrix},\;
  \fA_{2}
  =
  \begin{bmatrix}
    -\frac{\dd}{\dd \eta} & 0 \\
    0 & -\rot
  \end{bmatrix},\;
  \X_1 =
  \begin{bmatrix}
    \Lp{2}((0,1);\C^{k}) \\
    \Lp{2}(\Omega;\C^{3})
  \end{bmatrix}_{\times},
  \\
  \dom \fA_{1}
  &=
    \sset*{
    \begin{pmatrix}
     \bm{I} \\ \bm{H}
    \end{pmatrix}
    \in
    \begin{bmatrix}
      \Hspace^{1}((0,1);\C^k) \\
      \conC^{\infty}(\cl{\Omega};\C^3)
    \end{bmatrix}_{\times}
    } \subseteq \X_1
  \\
  \dom \fA_{2}
  &=
  \dset*{
  \begin{pmatrix}
    \bm{V} \\ \bm{E}
  \end{pmatrix}
  \in
  \begin{bmatrix}
    \Hspace^{1}((0,1);\C^k) \\
    \cH_{\Gamma_{\textup{r}}}(\rot,\Omega)
  \end{bmatrix}_{\times}
  }{\tantr \bm{E} = \portOp_{\textup{el}} \tfrac{\dd}{\dd \eta}\bm{V} \ \text{on}\ \Gamma_{\textup{lat}}} \subseteq \X_1,
\end{aligned}\label{eq:U1U2}
\end{equation}
By invoking that $\X = \X_1 \times \X_1$, the differential operator $\mathfrak{J}$ can now be written as
\begin{align}
    \mathfrak{J} =
    \begin{bmatrix}
        0 & \fA_{2} \\
        \fA_{1} & 0
    \end{bmatrix}.
\end{align}
\end{subequations}
We obviously have that $\dom\mathfrak{J} = \dom \fA_{1} \times \dom \fA_{2}$.
Moreover, we introduce the following operators that will form our minimal operator
\begin{subequations}\label{eq:Jring}
\begin{align}
  \mathring{\fA}_{1}
  =
  \begin{bmatrix}
    -\frac{\dd}{\dd \eta} & \frac{\dd}{\dd \eta} \portOp_{\textup{el}}\adjun \tanxtr \\
    0 & \rot
  \end{bmatrix}
  \quad\text{and}\quad
  \mathring{\fA}_{2}
  =
  \begin{bmatrix}
    -\frac{\dd}{\dd \eta} & 0 \\
    0 & -\rot
  \end{bmatrix}
\end{align}
with domains
\begin{equation}
\begin{aligned}
  \dom \mathring{\fA}_{1}
  &=
  \dset*{%
    \begin{pmatrix}
     \bm{I} \\ \bm{H}
    \end{pmatrix}
    \in
    \begin{bmatrix}
      \Hspace^{1}((0,1);\C^k) \\
      \conC^{\infty}(\cl{\Omega};\C^{3})
    \end{bmatrix}_{\times}
    }{\bm{I} - \portOp_{\textup{el}}\adjun\tanxtr \bm{H} \in \cH^{1}((0,1);\C^{k})},
  \\
  \dom \mathring{\fA}_{2}
  &=
  \dset*{
  \begin{pmatrix}
    \bm{V} \\ \bm{E}
  \end{pmatrix}
  \in
  \begin{bmatrix}
    \cH^{1}((0,1);\C^k) \\
    \cH_{\Gamma_{\textup{r}}}(\rot,\Omega)
  \end{bmatrix}_{\times}
  }{\tantr \bm{E} = \portOp_{\textup{el}} \tfrac{\dd}{\dd \eta}\bm{V}}.
\end{aligned}\end{equation}
\end{subequations}
Note that by definition we have
\begin{equation}\label{eq:minimal-subset-maximal-operator}
  -\mathring{\fA}_{1} \subseteq \fA_{1}\quad\text{and}\quad \mathopen{}-\mathring{\fA}_{2} \subseteq \fA_{2}.
\end{equation}

\begin{lemma}
Suppose that the spatial domains are as in \Cref{sec:Omega}. Then the operators $\mathring{\fA}_{1}$, $\mathring{\fA}_{2}$, $\fA_{1}$ and $\fA_{2}$ as in \eqref{eq:U1U2} and \eqref{eq:Jring}
 are densely defined.
\end{lemma}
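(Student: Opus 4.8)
The density of each of the four operators reduces, by the Cartesian product structure of their domains, to density of the individual component spaces inside the relevant $\Lp2$-spaces. I would treat the four operators simultaneously, observing that all of them are built from the same ingredients: a first component living in some variant of $\Hspace^1((0,1);\C^k)$ (either the full Sobolev space or $\cH^1((0,1);\C^k)$, with or without a coupling constraint), and a second component living in $\conC^\infty(\cl\Omega;\C^3)$ or $\cH_{\Gamma_{\textup r}}(\rot,\Omega)$. Since $\conC^\infty(\cl\Omega;\C^3)$ is dense in $\Lp2(\Omega;\C^3)$ and $\cH_{\Gamma_{\textup r}}(\rot,\Omega)$ contains $\Cc_{\Gamma_{\textup r}}(\Omega;\C^3)$ which in turn contains $\Cc(\Omega;\C^3)$, both second-component spaces are dense in $\Lp2(\Omega;\C^3)$; likewise $\Hspace^1((0,1);\C^k)$ and $\cH^1((0,1);\C^k)$ are dense in $\Lp2((0,1);\C^k)$. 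So the only issue is the coupling constraint.

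\textbf{Handling the coupling constraint.} For $\fA_2$ and $\mathring\fA_2$ the constraint is $\tantr\bm E = \portOp_{\textup{el}}\tfrac{\dd}{\dd\eta}\bm V$ on $\Gamma_{\textup{lat}}$. Given an arbitrary $(\bm V,\bm E)\in\Lp2((0,1);\C^k)\times\Lp2(\Omega;\C^3)$ and $\eps>0$, first pick $\bm V_n\to\bm V$ in $\Lp2((0,1);\C^k)$ with each $\bm V_n$ smooth (or, for $\mathring\fA_2$, in $\Cc^\infty((0,1);\C^k)$). By \Cref{le:lifting-to-Hrot} the lift $\hat\portOp_{i,\textup{el}}\bm V_n\in\Hspace(\rot,\Omega)$ is a legitimate field whose tangential trace on $\Gamma_{i,\textup{lat}}$ equals $\portOp_{i,\textup{el}}(\partial_\eta\bm V_n)$; set $\bm G_n\coloneq\sum_i\hat\portOp_{i,\textup{el}}\bm V_n$, which satisfies the coupling condition with $\bm V_n$. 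Since the lifted fields are supported near $\Gamma_{\textup{lat}}$ (inside $\ran\hat\Phi_i$), their $\Lp2(\Omega;\C^3)$-norm can be made small by shrinking the cut-off supports — or more simply, one observes $\|\bm G_n\|_{\Lp2}\le C\|\bm V_n\|_{\Hspace^1}$ is irrelevant and instead chooses the approximation order cleverly: approximate $\bm E$ by some $\bm E_m\in\cH_{\Gamma_{\textup r}}(\rot,\Omega)$ with $\tantr\bm E_m = 0$ on $\Gamma_{\textup{lat}}$ (possible since $\Cc(\Omega;\C^3)$ is dense in $\Lp2$), and then $\bm E_m + \bm G_n$ satisfies the coupling condition with $\bm V_n$ and converges to $\bm E$ provided $\|\bm G_n\|_{\Lp2}\to0$. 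To get the latter, one shrinks the tubular-neighbourhood radius $\eps$ in \eqref{eq:diffeomorphism-for-lifting} together with the cut-off support so that $|\ran\hat\Phi_i|\to0$; since $\|\bm G_n\|_{\Lp2(\Omega)}^2 = \|\chi_i\grad(\bm V_n\circ\hat\Psi_i)\|_{\Lp2(\ran\hat\Phi_i)}^2$, a diagonal argument (choosing $n=n(m)$ and $\eps=\eps(m)\to0$) makes both terms vanish.

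\textbf{The operators $\fA_1$ and $\mathring\fA_1$.} Here the second component ranges over $\conC^\infty(\cl\Omega;\C^3)$, which is already dense in $\Lp2(\Omega;\C^3)$, and there is no constraint for $\fA_1$ at all, so its domain is literally $\Hspace^1((0,1);\C^k)\times\conC^\infty(\cl\Omega;\C^3)$, plainly dense. For $\mathring\fA_1$ the constraint is $\bm I - \portOp_{\textup{el}}\adjun\tanxtr\bm H\in\cH^1((0,1);\C^k)$; given any $(\bm I,\bm H)$ in $\Lp2\times\Lp2$, approximate $\bm H$ by $\bm H_n\in\conC^\infty(\cl\Omega;\C^3)$, then $\portOp_{\textup{el}}\adjun\tanxtr\bm H_n = \portOp_{\textup{mag}}(\nu\times\bm H_n)\in\Lp2((0,1);\C^k)$ by \eqref{eq:magop}, and finally choose $\bm J_n\in\Cc^\infty((0,1);\C^k)$ with $\bm J_n\to\bm I - \portOp_{\textup{el}}\adjun\tanxtr\bm H$ in $\Lp2$ (using that $\portOp_{\textup{el}}\adjun\tanxtr\bm H_n\to$ something, or just noting $\bm H_n$ can be chosen with $\|\portOp_{\textup{mag}}(\nu\times\bm H_n)\|_{\Lp2}$ controlled); then $\bm I_n\coloneq\bm J_n + \portOp_{\textup{el}}\adjun\tanxtr\bm H_n$ lies in the required set and $(\bm I_n,\bm H_n)\to(\bm I,\bm H)$.

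\textbf{Main obstacle.} The delicate point is the simultaneous approximation in the $\fA_2$/$\mathring\fA_2$ case: one needs the coupling-compatible lift of $\bm V_n$ to have $\Lp2(\Omega)$-norm going to zero while $\bm V_n\to\bm V$ in $\Lp2((0,1))$, even though $\|\partial_\eta\bm V_n\|_{\Lp2}$ may blow up. The resolution is that the lift is localized in a tubular neighbourhood of $\Gamma_{\textup{lat}}$ whose volume we are free to send to $0$; quantifying this (the constant $C_i$ in \Cref{le:lifting-to-Hrot} depends on $\eps$, and the cut-off gradient $\|\grad\chi_i\|_\infty$ grows like $1/\eps$ — but this term only affects the $\rot$ part, not the $\Lp2$ part we care about here) and arranging the diagonal extraction correctly is the real content. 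Everything else is routine density of smooth/compactly-supported functions in $\Lp2$.
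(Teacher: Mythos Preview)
Your treatment of the hard case, the coupling-constrained domains $\dom\fA_2$ and $\dom\mathring\fA_2$, is essentially the paper's argument: lift $\bm V_n$ to a field via \Cref{le:lifting-to-Hrot}, add a compactly supported approximant of $\bm E$, and force the lift's $\Lp{2}$-norm to vanish by shrinking the support of the cut-off. The paper does this slightly more cleanly than your diagonal extraction: since $\bm V_n\in\Cc^\infty((0,1))$, one has $\|\partial_\eta\bm V_n\|_\infty<\infty$, and the lift satisfies a pointwise bound $|\hat\portOp_{\textup{el}}\bm V_n|\lesssim\|\partial_\eta\bm V_n\|_\infty$; choosing the cut-off $\chi_n$ with $\|\chi_n\|_{\Lp{2}}\le\frac{1}{n\|\partial_\eta\bm V_n\|_\infty}$ gives $\|\bm E_n^1\|_{\Lp{2}}\le\frac1n$ directly, with no need to track how constants depend on the tubular radius.

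Your handling of $\mathring\fA_1$, however, is both over-engineered and not quite closed. You want $\bm I_n=\bm J_n+\portOp_{\textup{el}}\adjun\tanxtr\bm H_n\to\bm I$ in $\Lp{2}$, but $\bm H_n\to\bm H$ only in $\Lp{2}$ gives no control on $\tanxtr\bm H_n$, and ``$\bm J_n\to\bm I-\portOp_{\textup{el}}\adjun\tanxtr\bm H$'' is meaningless for $\bm H\in\Lp{2}$. You also need $\bm I_n\in\Hspace^1((0,1);\C^k)$, which would force $\portOp_{\textup{el}}\adjun\tanxtr\bm H_n\in\Hspace^1$; that is true for smooth $\bm H_n$ but requires a separate regularity argument you do not give. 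The paper sidesteps all of this with a one-line observation you missed: if $\bm H\in\Cc(\Omega;\C^3)$ then $\tanxtr\bm H=0$, so the constraint $\bm I-\portOp_{\textup{el}}\adjun\tanxtr\bm H\in\cH^1$ becomes simply $\bm I\in\cH^1$, and therefore $\Cc((0,1);\C^k)\times\Cc(\Omega;\C^3)\subset\dom\mathring\fA_1$, which is already dense. This also makes your case-by-case treatment of $\fA_1,\fA_2$ redundant, since $\dom\mathring\fA_i\subset\dom\fA_i$.
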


\begin{proof}
  It is enough to show that $\mathring{\fA}_{1}$ and $\mathring{\fA}_{2}$ are densely defined as their domains are subsets of $\dom(\fA_{1})$ and $\dom(\fA_{2})$, respectively.
  Moreover, $\dom \mathring{\fA}_{1}$ contains $\Cc((0,1);\C^k) \times \Cc(\Omega;\C^3)$ and is therefore dense in $\Lp{2}((0,1);\C^k) \times \Lp{2}(\Omega;\C^3)$. Hence, it is left to show that $\dom \mathring{\fA}_{2}$ is dense in $\Lp{2}((0,1);\C^k) \times \Lp{2}(\Omega;\C^3)$.

  Let $\begin{psmallmatrix} \bm{V} \\ \bm{E} \end{psmallmatrix} \in \Lp{2}((0,1);\C^{k}) \times \Lp{2}(\Omega;\C^{3})$. Then there exists a sequence $(\bm{V}_{n})_{n\in\N}$ in $\Cc((0,1))$ that converges to $\bm{V}$ w.r.t.\ $\norm{\argdot}_{\Lp{2}((0,1))}$. We use the mapping $\hat{\portOp}_{\textup{el}}$ from \Cref{le:lifting-to-Hrot} to lift $ \bm{V}_n$ on $\Hspace(\rot,\Omega)$. Multiplying this by a sequence $(\chi_{n})_{n\in\N}$ of $\Cc$ cut-off functions that are $1$ on $\Gamma_{\textup{lat}}$ and satisfy $\norm{\chi_{n}}_{\Lp{2}} \leq \frac{1}{n}\frac{1}{\norm{\partial_{\eta}\bm{V}_{n}}_{\infty}}$, we define
  \begin{equation*}
    \bm{E}_{n}^{1} \coloneq \chi_{n} \hat{\portOp}_{\textup{el}} \bm{V}_{n},
  \end{equation*}
  which satisfies the boundary condition $\tantr \bm{E}_{n}^{1} = \portOp_{\textup{el}} \bm{V}_{n}$ and
  \begin{equation*}
    \norm{\bm{E}^{1}_{n}}_{\Lp{2}(\Omega)} \leq \norm{\chi_{n}}_{\Lp{2}(\Omega)} \norm{\partial_{\eta} \bm{V}_{n}}_{\infty} \to 0.
  \end{equation*}
  For the given $\bm{E}$ there exists a sequence $(\bm{E}^{2}_{n})_{n\in\N}$ in $\Cc(\Omega)$ that converges to $\bm{E}$ w.r.t.\ $\norm{\argdot}_{\Lp{2}(\Omega)}$. Hence, we define
  \begin{equation*}
    \bm{E}_n \coloneq \bm{E}^{1}_{n} + \bm{E}^{2}_{n}.
  \end{equation*}
  This makes sure that
  $
  \begin{psmallmatrix}
    \bm{V}_{n} \\ \bm{E}_{n}
  \end{psmallmatrix}
  \in \dom \mathring{\fA}_{1}
  $ and
  $
  \begin{psmallmatrix}
    \bm{V}_{n} \\ \bm{E}_{n}
  \end{psmallmatrix}
  \to
  \begin{psmallmatrix}
    \bm{V} \\ \bm{E}
  \end{psmallmatrix}
  $.
\end{proof}

\begin{lemma}\label{le:A1-as-adjoint-of-A2}
Suppose that the spatial domains are as in \Cref{sec:Omega}. Then the operators as in \eqref{eq:U1U2} and \eqref{eq:Jring}
  fulfill
  \begin{align*}
    \mathring{\fA}_{1}\adjun = -\fA_{2}
    \quad\text{and}\quad
    \fA_{1}\adjun = -\mathring{\fA}_{2}.
  \end{align*}
\end{lemma}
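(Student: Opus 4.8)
The plan is to establish the two adjoint identities by the standard ``integration-by-parts-gives-the-adjoint'' argument, exploiting the block structure of $\fA_1,\fA_2$ and the fact (\Cref{le:lifting-to-Hrot}) that the coupling condition $\tantr\bm E=\portOp_{\textup{el}}\tfrac{\dd}{\dd\eta}\bm V$ can be lifted. Since $\adjun$ reverses inclusions and $-\mathring\fA_j\subseteq\fA_j$ (see \eqref{eq:minimal-subset-maximal-operator}), the two claims are equivalent to showing that $\fA_1$ and $-\mathring\fA_2$ are mutually adjoint; so it suffices to prove $\fA_1\subseteq(-\mathring\fA_2)\adjun$ together with $(-\mathring\fA_2)\adjun\subseteq\fA_1$ (equivalently, $\fA_1\adjun\subseteq-\mathring\fA_2$), and likewise for the pair $\mathring\fA_1$, $-\fA_2$. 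I would treat $\fA_1\adjun=-\mathring\fA_2$ in detail and indicate that $\mathring\fA_1\adjun=-\fA_2$ follows by the symmetric computation (or by taking adjoints once more and using that $\fA_2,\mathring\fA_2$ are closable with the right closures).

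\emph{Step 1: the Green identity / inclusion $-\mathring\fA_2\subseteq\fA_1\adjun$.} Take $\spvek{\bm I}{\bm H}\in\dom\fA_1$ and $\spvek{\bm V}{\bm E}\in\dom\mathring\fA_2$. Writing out $\scprod{\fA_1\spvek{\bm I}{\bm H}}{\spvek{\bm V}{\bm E}}_{\X_1}$, the transmission-line part contributes $\scprod{-\tfrac{\dd}{\dd\eta}\bm I+\tfrac{\dd}{\dd\eta}\portOp_{\textup{el}}\adjun\tanxtr\bm H}{\bm V}$ and, using $\bm I_{\textup{tot}}=\bm I-\portOp_{\textup{el}}\adjun\tanxtr\bm H$ as suggested in the text, this is $\scprod{-\tfrac{\dd}{\dd\eta}\bm I_{\textup{tot}}}{\bm V}_{\Lp2((0,1);\C^k)}$; since $\bm V\in\cH^1$ has vanishing endpoint values, integration by parts on $(0,1)$ produces no boundary terms and yields $\scprod{\bm I_{\textup{tot}}}{\tfrac{\dd}{\dd\eta}\bm V}_{\Lp2}$. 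The Maxwell part contributes $\scprod{\rot\bm H}{\bm E}_{\Lp2(\Omega;\C^3)}$; by the extended integration-by-parts formula for $\rot$ (the definition of $\dualprod{\cdot}{\cdot}_{\Vtaud,\Vtau}$) applied with $\bm E\in\cH_{\Gamma_{\textup{r}}}(\rot,\Omega)$ and $\bm H\in\conC^\infty(\cl\Omega;\C^3)$, this equals $\scprod{\bm H}{\rot\bm E}_{\Lp2}+\dualprod{\tanxtr\bm H}{\tantr\bm E}_{\Vtaud,\Vtau}$, and on $\dom\mathring\fA_2$ we have $\tantr\bm E=\portOp_{\textup{el}}\tfrac{\dd}{\dd\eta}\bm V\in\Lp2(\Gamma_{\textup{lat}};\C^3)$, so the pairing becomes the genuine $\Lp2$-inner product $\scprod{\tanxtr\bm H}{\portOp_{\textup{el}}\tfrac{\dd}{\dd\eta}\bm V}_{\Lp2(\Gamma_{\textup{lat}};\C^3)}=\scprod{\portOp_{\textup{el}}\adjun\tanxtr\bm H}{\tfrac{\dd}{\dd\eta}\bm V}_{\Lp2((0,1);\C^k)}$, using $\portOp_{\textup{el}}\adjun=\portOp_{\textup{mag}}$ (the Proposition). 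Adding the two lines, the terms $\scprod{\bm I_{\textup{tot}}}{\tfrac{\dd}{\dd\eta}\bm V}$ and $\scprod{\portOp_{\textup{el}}\adjun\tanxtr\bm H}{\tfrac{\dd}{\dd\eta}\bm V}$ recombine into $\scprod{\bm I}{\tfrac{\dd}{\dd\eta}\bm V}$, so altogether
\[
  \scprod*{\fA_1\begin{psmallmatrix}\bm I\\\bm H\end{psmallmatrix}}{\begin{psmallmatrix}\bm V\\\bm E\end{psmallmatrix}}_{\X_1}
  = \scprod*{\begin{psmallmatrix}\bm I\\\bm H\end{psmallmatrix}}{\begin{psmallmatrix}\tfrac{\dd}{\dd\eta}\bm V\\\rot\bm E\end{psmallmatrix}}_{\X_1}
  = \scprod*{\begin{psmallmatrix}\bm I\\\bm H\end{psmallmatrix}}{-\mathring\fA_2\begin{psmallmatrix}\bm V\\\bm E\end{psmallmatrix}}_{\X_1},
\]
which shows $\spvek{\bm V}{\bm E}\in\dom\fA_1\adjun$ with $\fA_1\adjun\spvek{\bm V}{\bm E}=-\mathring\fA_2\spvek{\bm V}{\bm E}$, i.e.\ $-\mathring\fA_2\subseteq\fA_1\adjun$.

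\emph{Step 2: the reverse inclusion $\fA_1\adjun\subseteq-\mathring\fA_2$.} Let $\spvek{\bm V}{\bm E}\in\dom\fA_1\adjun$ with $\fA_1\adjun\spvek{\bm V}{\bm E}=\spvek{\bm f}{\bm g}$. Testing first against $\spvek{\bm I}{0}$ with $\bm I\in\Cc^\infty((0,1);\C^k)$ shows, in the distributional sense, $\tfrac{\dd}{\dd\eta}\bm V=\bm f\in\Lp2$, so $\bm V\in\Hspace^1((0,1);\C^k)$; testing against $\spvek0{\bm H}$ with $\bm H\in\Cc(\Omega;\C^3)$ shows $\rot\bm E=\bm g\in\Lp2$, so $\bm E\in\Hspace(\rot,\Omega)$, and testing against $\spvek0{\bm H}$ with $\bm H\in\Cc_{\Gamma_{\textup{r}}}(\Omega;\C^3)$ identifies (via the $\Vtaud,\Vtau$-pairing) the weak tangential trace of $\bm E$ on $\Gamma_{\textup{lat}}$ as an $\Lp2(\Gamma_{\textup{lat}})$ function; combined with density of such $\bm H$ in $\cH_{\Gamma_{\textup{r}}}(\rot,\Omega)$ this forces $\bm E\in\cH_{\Gamma_{\textup{r}}}(\rot,\Omega)$. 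Now redo the Green computation of Step 1 for general $\spvek{\bm I}{\bm H}\in\dom\fA_1$; the boundary contributions that must vanish are, at each endpoint $\eta\in\{0,1\}$, the scalar product of $\bm I_{\textup{tot}}(\eta)$ resp.\ $\bm V(\eta)$ against $\bm V(\eta)$ resp.\ (the trace data of) $\bm I_{\textup{tot}}$, plus the defect $\dualprod{\tanxtr\bm H}{\tantr\bm E}-\scprod{\tanxtr\bm H}{\portOp_{\textup{el}}\tfrac{\dd}{\dd\eta}\bm V}$. Choosing $\bm H\in\Cc_{\Gamma_{\textup{r}}}(\Omega;\C^3)$ (so $\bm I_{\textup{tot}}=\bm I$, whose endpoint values range over all of $\C^{2k}$) while letting $\bm H$ have arbitrary lateral tangential trace, and using surjectivity of the trace maps, forces both $\bm V(0)=\bm V(1)=0$ and $\tantr\bm E=\portOp_{\textup{el}}\tfrac{\dd}{\dd\eta}\bm V$ on $\Gamma_{\textup{lat}}$; that is, $\bm V\in\cH^1((0,1);\C^k)$ and $\spvek{\bm V}{\bm E}\in\dom\mathring\fA_2$. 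Hence $\fA_1\adjun\subseteq-\mathring\fA_2$, and together with Step 1, $\fA_1\adjun=-\mathring\fA_2$. Finally, $\mathring\fA_1\adjun=-\fA_2$ follows either by the same computation with the roles of the two operators exchanged (now the boundary terms vanish because the \emph{domain-$\mathring\fA_1$} elements, i.e.\ the $\bm I-\portOp_{\textup{el}}\adjun\tanxtr\bm H$, lie in $\cH^1$, while $\bm V,\bm E$ range freely), or by taking adjoints in $\fA_1\adjun=-\mathring\fA_2$ after checking $\mathring\fA_1$ and $\fA_2$ are the relevant closed operators.

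\emph{Main obstacle.} The delicate point is Step 2: separating, in the boundary defect, the finite-dimensional endpoint data on $(0,1)$ from the genuinely infinite-dimensional tangential-trace term on $\Gamma_{\textup{lat}}$, and arguing that each must vanish independently. This requires knowing that $\conC^\infty(\cl\Omega;\C^3)$ produces, via $\tanxtr$, a rich enough family of lateral tangential traces (a surjectivity/density statement for $\tanxtr$ onto the relevant trace space) — precisely the technical trace-space machinery the authors deliberately suppress — and that one can simultaneously prescribe the endpoint values of $\bm I$ independently of that trace. Handling this cleanly, rather than by an explicit characterization of $\Vtau$, is where the real work lies; the $\Lp2$-boundedness and mutual adjointness of $\portOp_{\textup{mag}}$, $\portOp_{\textup{el}}$ (already available) and the lifting \Cref{le:lifting-to-Hrot} are exactly the tools that let one bypass that characterization.
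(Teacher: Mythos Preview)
Your overall strategy---compute the adjoint by testing with well-chosen elements and read off both the action and the domain conditions---is exactly what the paper does. Step~1 (the easy inclusion $-\mathring{\fA}_{2}\subseteq\fA_{1}\adjun$) is fine and matches the paper's ``straightforward'' direction.

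Step~2, however, has a genuine slip and an unnecessary complication. First, to show $\bm E\in\cH_{\Gamma_{\textup{r}}}(\rot,\Omega)$ you test with $\bm H\in\Cc_{\Gamma_{\textup{r}}}(\Omega;\C^{3})$; but such $\bm H$ vanish near $\Gamma_{\textup{r}}$ and therefore cannot detect the trace of $\bm E$ on $\Gamma_{\textup{r}}$. You need the opposite class, $\bm H\in\Cc_{\Gamma_{\textup{lat}}}(\Omega;\C^{3})$ (vanishing near $\Gamma_{\textup{lat}}$, free on $\Gamma_{\textup{r}}$), so that the coupling term $\portOp_{\textup{el}}\adjun\tanxtr\bm H$ drops out and the remaining boundary identity forces $\tantr\bm E|_{\Gamma_{\textup{r}}}=0$. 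This is precisely what the paper uses.

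Second, the ``main obstacle'' you flag---separating the endpoint data at $\eta\in\{0,1\}$ from the lateral trace term---is self-imposed. The paper avoids it entirely by testing \emph{component-wise with enlarged test spaces}: test with $\spvek{\bm I}{0}$ for $\bm I\in\Hspace^{1}((0,1);\C^{k})$ (not just $\Cc$) to force $\bm V(0)=\bm V(1)=0$ directly; then test with $\spvek{0}{\bm H}$ for $\bm H\in\Cc_{\Gamma_{\textup{lat}}}(\Omega;\C^{3})$ to get the $\Gamma_{\textup{r}}$-condition; and finally with general $\bm H\in\conC^{\infty}(\cl\Omega;\C^{3})$ to extract $\tantr\bm E=\portOp_{\textup{el}}\tfrac{\dd}{\dd\eta}\bm V$. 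No simultaneous separation argument, and no appeal to surjectivity of $\tanxtr$ onto an unspecified trace space, is needed. Your proposed route via ``taking adjoints once more'' to get $\mathring{\fA}_{1}\adjun=-\fA_{2}$ from $\fA_{1}\adjun=-\mathring{\fA}_{2}$ also does not work without already knowing the closures, so the paper simply repeats the component-wise test for that identity as well.
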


\begin{proof}
  We first show $\mathring{\fA}_{1}\adjun = -\fA_{2}$.
  For
  $\begin{psmallmatrix} \bm{V} \\ \bm{E} \end{psmallmatrix} \in \dom \mathring{\fA}_{1}\adjun$
  we have (by definition of the adjoint)
  \begin{align}\label{eq:adjoint-formula}
    \scprod*{\mathring{\fA}_{1}\adjun\begin{psmallmatrix} \bm{V} \\ \bm{E} \end{psmallmatrix}}{\begin{psmallmatrix}\bm{I} \\ \bm{H} \end{psmallmatrix}}_{\X_1}
    = \scprod*{\begin{psmallmatrix} \bm{V} \\ \bm{E} \end{psmallmatrix}}{\mathring{\fA}_{1}\begin{psmallmatrix}\bm{I} \\ \bm{H} \end{psmallmatrix}}_{\X_1}
    \quad\text{for all}\quad
    \begin{psmallmatrix}
     \bm{I} \\ \bm{H}
    \end{psmallmatrix}
    \in \dom \mathring{\fA}_{1}.
  \end{align}
  Hence, we can calculate the action of $\mathring{\fA}_{1}\adjun$ by testing component-wise.
  \begin{itemize}[itemsep=4pt]
    \item For every $\bm{I} \in \Cc((0,1);\C^{k})$ we have
          $\begin{psmallmatrix}\bm{I} \\ \bm{0} \end{psmallmatrix} \in \dom \mathring{\fA}_{1}$ and
          \begin{align}\label{eq:test-first-component}
            \scprod*{\pi_{1} \mathring{\fA}_{1}\adjun\begin{psmallmatrix}\bm{V} \\ \bm{E}\end{psmallmatrix}}{\bm{I}}_{\X_1}
            = \scprod*{\mathring{\fA}_{1}\adjun\begin{psmallmatrix}\bm{V} \\ \bm{E}\end{psmallmatrix}}{\begin{psmallmatrix}\bm{I} \\ \bm{0}\end{psmallmatrix}}_{\X_1}
            \stackrel{\mathclap{\eqref{eq:adjoint-formula}}}{=}
            \scprod*{\bm{V}}{-\tfrac{\dd}{\dd \eta}\bm{I}}_{\Lp{2}((0,1);\C^{k})},
          \end{align}
          where $\pi_{1}$ is the projection on the first component (the $\Lp{2}((0,1);\C^k)$ component). Hence, $\bm{I} \in \Hspace^{1}((0,1);\C^{k})$ and
          $\pi_{1}\mathring{\fA}_{1}\adjun\begin{psmallmatrix}\bm{V} \\\bm{E}\end{psmallmatrix} = \frac{\dd}{\dd \eta} \bm{V}$.

    \item For every $\bm{H} \in \Cc(\Omega;\C^{3})$ we have
          $\begin{psmallmatrix} \bm{0} \\ \bm{H} \end{psmallmatrix} \in \dom \mathring{\fA}_{1}$ and
          \begin{align}\label{eq:test-second-component}
            \scprod*{\pi_{2} \mathring{\fA}_{1}\adjun \begin{psmallmatrix}\bm{V} \\ \bm{E}\end{psmallmatrix}}{\bm{H}}_{\X_1}
            = \scprod*{\mathring{\fA}_{1}\adjun\begin{psmallmatrix}\bm{V} \\ \bm{E}\end{psmallmatrix}}{\begin{psmallmatrix} \bm{0} \\ \bm{H}\end{psmallmatrix}}_{\X_1}
            \stackrel{\eqref{eq:adjoint-formula}}{=}
            \scprod*{\bm{E}}{\rot\bm{H}}_{\Lp{2}(\Omega;\C^{3})},
          \end{align}
          where $\pi_{2}$ is the projection on the second component (the $\Lp{2}(\Omega;\C^3)$ component). Hence, $\bm{E} \in \Hspace(\rot,\Omega)$ and
          $\pi_{2}\mathring{\fA}_{2}\adjun \begin{psmallmatrix}\bm{V} \\ \bm{E}\end{psmallmatrix} = \rot\bm{E}$.
  \end{itemize}
  We conclude
  \begin{align*}
    \mathring{\fA}_{1}\adjun \begin{pmatrix}\bm{V} \\ \bm{E}\end{pmatrix} = \begin{pmatrix} \frac{\dd}{\dd \eta} \bm{V} \\ \rot\bm{E}\end{pmatrix}
    =
    \begin{bmatrix}\frac{\dd}{\dd \eta} & 0 \\ 0 & \rot \end{bmatrix}\begin{pmatrix}\bm{V} \\ \bm{E}\end{pmatrix}
    \quad\text{for}\quad
    \begin{pmatrix}\bm{V} \\ \bm{E}\end{pmatrix} \in \dom \mathring{\fA}_{1}\adjun.
  \end{align*}

  Now use the notation from \eqref{eq:int-by-parts-with-boundary-conditions} for the trace operators. By the boundary conditions of $\mathring{\fA}_{1}$ we have for
  $
    \begin{psmallmatrix}\bm{I} \\ \bm{H}\end{psmallmatrix}
    \in \dom \mathring{\fA}_{1}
  $
  and
  $\begin{psmallmatrix}\bm{V} \\ \bm{E}\end{psmallmatrix} \in \dom \mathring{\fA}_{1}\adjun$
  \begin{align*}
  \MoveEqLeft[2]
  \scprod*{\begin{psmallmatrix}\bm{V} \\ \bm{E}\end{psmallmatrix}}{\mathring{\fA}_{1}\begin{psmallmatrix}\bm{I} \\ \bm{H}\end{psmallmatrix}}_{\X_1}
    = \scprod*{\begin{psmallmatrix}\bm{V} \\ \bm{E}\end{psmallmatrix}}{\begin{psmallmatrix}-\frac{\dd}{\dd \eta}\bm{I} + \frac{\dd}{\dd \eta}\portOp_{\textup{el}}\adjun \tanxtr\bm{H} \\ \rot\bm{H}\end{psmallmatrix}}_{\X_1}
    \\[1ex]
    &= \scprod[\big]{\tfrac{\dd}{\dd \eta} \bm{V}}{\bm{I}}_{\Lp{2}((0,1);\C^{k})}
      - \scprod{\tfrac{\dd}{\dd \eta}\bm{V}}{\portOp_{\textup{el}}\adjun \tanxtr\bm{H}}_{\Lp{2}((0,1);\C^{k})}
    \\
    & \hspace{4.5cm}
    + \scprod{\rot\bm{E}}{\bm{H}}_{\Lp{2}(\Omega;\C^{3})} + \dualprod{\tantr\bm{E}}{\tanxtr\bm{H}}_{\Vtau,\Vtaud} \\[1ex]
    &= \scprod*{\mathring{\fA}_{1}\adjun\begin{psmallmatrix}\bm{V} \\ \bm{E}\end{psmallmatrix}}{\begin{psmallmatrix}\bm{I} \\ \bm{H}\end{psmallmatrix}}_{\X_1}
      - \Big[\scprod{\portOp_{\textup{el}} \tfrac{\dd}{\dd \eta}\bm{V}}{\tanxtr\bm{H}}_{\Lp{2}(\Gamma_{\textup{lat}})} - \dualprod{\tantr\bm{E}}{\tanxtr\bm{H}}_{\Vtau,\Vtaud}\Big],
  \end{align*}
  which implies $\dualprod{\tantr\bm{E}}{\tanxtr\bm{H}}_{\Vtau,\Vtaud} = \scprod{\portOp_{\textup{el}}\tfrac{\dd}{\dd \eta} \bm{V}}{\tanxtr\bm{H}}_{\Lp{2}(\Gamma_{\textup{lat}})}$
  for all
  $\bm{H} \in \conC^{\infty}(\cl{\Omega};\C^3)$.
  Therefore, $\tantr\bm{E} = \portOp_{\textup{el}} \tfrac{\dd}{\dd \eta}\bm{V}$ and
  $\begin{psmallmatrix}\bm{V} \\ \bm{E}\end{psmallmatrix} \in \dom \fA_{2}$ and in turn $\mathring{\fA}_{1}\adjun \subseteq -\fA_{2}$. On the other hand it is straightforward to show $-\fA_{2} \subseteq \mathring{\fA}_{1}\adjun$, which proves $\mathring{\fA}_{1}\adjun = -\fA_{2}$.

  In order to show $\fA_{1}\adjun = -\mathring{\fA}_{2}$ we regard $\begin{psmallmatrix} \bm{V} \\ \bm{E} \end{psmallmatrix} \in \dom \fA_{1}\adjun$ and the equation
  \begin{align*}
    \scprod[\Big]{\fA_{1}\adjun\begin{psmallmatrix} \bm{V} \\ \bm{E} \end{psmallmatrix}}{\begin{psmallmatrix}\bm{I} \\ \bm{H} \end{psmallmatrix}}_{\X_1}
    = \scprod[\Big]{\begin{psmallmatrix} \bm{V} \\ \bm{E} \end{psmallmatrix}}{\fA_{1}\begin{psmallmatrix}\bm{I} \\ \bm{H} \end{psmallmatrix}}_{\X_1}
    \quad\text{for all}\quad
    \begin{psmallmatrix}
     \bm{I} \\ \bm{H}
    \end{psmallmatrix}
    \in \dom \fA_{1}.
  \end{align*}
  We can repeat the arguments \eqref{eq:test-first-component} and \eqref{eq:test-second-component} of the first part of the proof to conclude
  $\bm{V} \in \Hspace^{1}((0,1);\C^k)$ and $\bm{E} \in \Hspace(\rot,\Omega)$.
  Moreover, since we can choose $\bm{I} \in \Hspace^{1}((0,1);\C^k)$ in~\eqref{eq:test-first-component} we even obtain $\bm{V} \in \cH^{1}((0,1);\C^k)$. Similarly, choosing $\bm{H} \in \Cc_{\Gamma_{\textup{lat}}}(\Omega; \C^{3})$ implies $\bm{E} \in \cH_{\Gamma_{\textup{r}}}(\rot,\Omega)$. Again repeating the remaining steps of the first part of the proof yields $\fA_{1}\adjun = -\mathring{\fA}_{2}$.
\end{proof}

\begin{corollary}
Suppose that the spatial domains are as in \Cref{sec:Omega}. Then the operators as in \eqref{eq:U1U2} and \eqref{eq:Jring} fulfill
\begin{enumerate}
    \item $\mathring{\fA}_{2}$ and $\fA_{2}$ are closed,
    \item $\mathring{\fA}_{1}$ and $\fA_{1}$ are closable,
    \item the closures of $\mathring{\fA}_{1}$ and $\fA_{1}$ fulfill
  \begin{equation*}
    \cl{\mathring{\fA}_{1}} = -\fA_{2}\adjun \quad\text{and}\quad \cl{\fA_{1}} = -\mathring{\fA}_{2}\adjun.
  \end{equation*}
\end{enumerate}
\end{corollary}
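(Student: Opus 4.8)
The plan is to reduce everything to the two adjoint identities $\mathring{\fA}_{1}\adjun = -\fA_{2}$ and $\fA_{1}\adjun = -\mathring{\fA}_{2}$ from \Cref{le:A1-as-adjoint-of-A2}, combined with the density of all four domains established in the two preceding lemmas and three standard facts about densely defined operators on Hilbert spaces: (a) the adjoint of a densely defined operator is closed; (b) a densely defined operator $A$ is closable precisely when $A\adjun$ is densely defined; and (c) in that case $\cl{A} = (A\adjun)\adjun$. I would also use that scalar multiplication by $-1$ commutes with taking adjoints, i.e. $(-A)\adjun = -(A\adjun)$.

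For item (i): $\mathring{\fA}_{1}$ and $\fA_{1}$ are densely defined, so by (a) their adjoints $\mathring{\fA}_{1}\adjun = -\fA_{2}$ and $\fA_{1}\adjun = -\mathring{\fA}_{2}$ are closed; hence $\fA_{2}$ and $\mathring{\fA}_{2}$ are closed. For item (ii): by the density lemma above, $\fA_{2}$ and $\mathring{\fA}_{2}$ are densely defined, equivalently $\mathring{\fA}_{1}\adjun$ and $\fA_{1}\adjun$ are densely defined, so (b) yields that $\mathring{\fA}_{1}$ and $\fA_{1}$ are closable. For item (iii): combining (ii) with (c) and $(-A)\adjun=-A\adjun$ gives $\cl{\mathring{\fA}_{1}} = (\mathring{\fA}_{1}\adjun)\adjun = (-\fA_{2})\adjun = -\fA_{2}\adjun$ and $\cl{\fA_{1}} = (\fA_{1}\adjun)\adjun = (-\mathring{\fA}_{2})\adjun = -\mathring{\fA}_{2}\adjun$.

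There is no genuine obstacle here: the corollary is a purely formal consequence of the two preceding lemmas. The only points that require a modicum of care are to cite the correct density statement in step (ii) — namely that $\fA_{2}$ and $\mathring{\fA}_{2}$, i.e. the adjoints $\mathring{\fA}_{1}\adjun$ and $\fA_{1}\adjun$, are densely defined — and to keep the sign bookkeeping straight when passing from $\mathring{\fA}_{1}\adjun=-\fA_{2}$ to $\cl{\mathring{\fA}_{1}}=-\fA_{2}\adjun$.
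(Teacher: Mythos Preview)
Your argument is correct and essentially identical to the paper's own proof: both derive (i)--(iii) from the adjoint identities of \Cref{le:A1-as-adjoint-of-A2}, the density lemma, and the standard facts that adjoints of densely defined operators are closed and that $\cl{A}=A\adjun[2]$ when $A\adjun$ is densely defined. The only cosmetic difference is that the paper compresses (ii) and (iii) into a single double-adjoint computation rather than invoking the closability criterion separately.
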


\begin{proof}
  By \Cref{le:A1-as-adjoint-of-A2} $\fA_{2} = -\mathring{\fA}_{1}\adjun$ and $\mathring{\fA}_{2} = -\fA_{1}\adjun$, which immediately implies the closedness of these operators. Since $\mathring{\fA}_{2}$ and $\fA_{2}$ are densely defined, we conclude that $\mathring{\fA}_{2}\adjun$ and $\fA_{2}\adjun$ are well-defined and closed operators. Hence,
  \begin{align*}
    \cl{\mathring{\fA}_{1}} = \mathring{\fA}_{1}\adjun[2] = -\fA_{2}\adjun
    \quad\text{and}\quad
    \cl{\fA_{1}} = \fA_{1}\adjun[2] = -\mathring{\fA}_{2}\adjun
  \end{align*}
  implies that $\mathring{\fA}_{1}$ and $\fA_{1}$ are closable.
\end{proof}

\begin{lemma}\label{le:abstract-int-by-parts-on-dense-domain}
  Suppose that the spatial domains are as in \Cref{sec:Omega}, and let $\fA_1$, $\fA_2$ be as in \eqref{eq:U1U2}. Then, for all
  $\begin{psmallmatrix}\bm{V} \\ \bm{E} \end{psmallmatrix} \in \dom \fA_{2}$ and
  $\begin{psmallmatrix} \bm{I} \\ \bm{H} \end{psmallmatrix} \in \dom \fA_{1}$, it holds that
  \begingroup
  \thinmuskip=2mu%
  \medmuskip=2mu%
  \thickmuskip=3mu plus 2mu%
  \begin{equation*}
   \scprod*{\fA_{2}\begin{pmatrix}\bm{V} \\ \bm{E} \end{pmatrix}}{\begin{pmatrix} \bm{I} \\ \bm{H} \end{pmatrix}}_{\mkern-8mu\X_1}
    + \scprod*{\begin{pmatrix}\bm{V} \\ \bm{E} \end{pmatrix}}{\fA_{1}\begin{pmatrix} \bm{I} \\ \bm{H} \end{pmatrix}}_{\mkern-8mu\X_1}
    = \scprod*{\begin{pmatrix}\bm{V}(0) \\ -\bm{V}(1)\end{pmatrix}}{\begin{pmatrix} \bm{I}_{\textup{tot}}(0) \\ \bm{I}_{\textup{tot}}(1)\end{pmatrix}}_{\mkern-6mu\C^{2k}},
  \end{equation*}
  \endgroup
  where $\bm{I}_{\textup{tot}} = \bm{I} - \portOp_{\textup{el}}\adjun \tanxtr \bm{H}$.
\end{lemma}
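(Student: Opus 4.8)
The plan is to expand the left-hand side componentwise, perform two elementary integrations by parts on $(0,1)$ and one Green identity for $\rot$, and then use the boundary condition defining $\dom\fA_2$ together with the adjoint relation $\portOp_{\textup{el}}\adjun=\portOp_{\textup{mag}}$ to make all interior contributions cancel, so that only the boundary term at $\eta\in\{0,1\}$ survives.

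First I would record the actions of the two operators. For $\begin{psmallmatrix}\bm{I}\\\bm{H}\end{psmallmatrix}\in\dom\fA_1$ and $\begin{psmallmatrix}\bm{V}\\\bm{E}\end{psmallmatrix}\in\dom\fA_2$ one has, with $\bm{I}_{\textup{tot}}=\bm{I}-\portOp_{\textup{el}}\adjun\tanxtr\bm{H}$,
\[
  \fA_1\begin{pmatrix}\bm{I}\\\bm{H}\end{pmatrix}=\begin{pmatrix}-\tfrac{\dd}{\dd\eta}\bm{I}_{\textup{tot}}\\ \rot\bm{H}\end{pmatrix},
  \qquad
  \fA_2\begin{pmatrix}\bm{V}\\\bm{E}\end{pmatrix}=\begin{pmatrix}-\tfrac{\dd}{\dd\eta}\bm{V}\\ -\rot\bm{E}\end{pmatrix},
\]
where $\bm{I}_{\textup{tot}}\in\Hspace^{1}((0,1);\C^{k})$ because $\bm{I}\in\Hspace^{1}((0,1);\C^{k})$ and, for the smooth field $\bm{H}\in\conC^{\infty}(\cl{\Omega};\C^{3})$, the function $\portOp_{\textup{el}}\adjun\tanxtr\bm{H}=\portOp_{\textup{mag}}\big(\tanxtr\bm{H}|_{\Gamma_{\textup{lat}}}\big)$ (a circulation integral over the contours $\alpha_i(\eta)+\beta_{i\eta}$) depends $\conC^{1}$-smoothly on $\eta$ by the $\conC^{2}/\conC^{1}$-regularity of the tube parametrisation from \Cref{sec:Omega}. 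Substituting $\bm{I}=\bm{I}_{\textup{tot}}+\portOp_{\textup{el}}\adjun\tanxtr\bm{H}$ in the first inner product, the left-hand side of the asserted identity splits into three groups: \textbf{(A)} the transmission-line part $\scprod{-\tfrac{\dd}{\dd\eta}\bm{V}}{\bm{I}_{\textup{tot}}}_{\Lp{2}((0,1))}+\scprod{\bm{V}}{-\tfrac{\dd}{\dd\eta}\bm{I}_{\textup{tot}}}_{\Lp{2}((0,1))}$; \textbf{(B)} the cross term $\scprod{-\tfrac{\dd}{\dd\eta}\bm{V}}{\portOp_{\textup{el}}\adjun\tanxtr\bm{H}}_{\Lp{2}((0,1))}$; and \textbf{(C)} the field part $\scprod{-\rot\bm{E}}{\bm{H}}_{\Lp{2}(\Omega)}+\scprod{\bm{E}}{\rot\bm{H}}_{\Lp{2}(\Omega)}$.

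Group \textbf{(A)} is the product rule for $\Hspace^{1}((0,1);\C^{k})$-functions and equals $\scprod{\bm{V}(0)}{\bm{I}_{\textup{tot}}(0)}_{\C^{k}}-\scprod{\bm{V}(1)}{\bm{I}_{\textup{tot}}(1)}_{\C^{k}}$, which is exactly the claimed right-hand side rewritten in $\C^{2k}$ block form. For group \textbf{(C)} I would invoke the $\rot$ integration by parts already used in the proof of \Cref{le:A1-as-adjoint-of-A2}, namely $\scprod{\bm{E}}{\rot\bm{H}}_{\Lp{2}(\Omega)}-\scprod{\rot\bm{E}}{\bm{H}}_{\Lp{2}(\Omega)}=\dualprod{\tantr\bm{E}}{\tanxtr\bm{H}}_{\Vtau,\Vtaud}$, valid for $\bm{E}\in\cH_{\Gamma_{\textup{r}}}(\rot,\Omega)$ and $\bm{H}\in\conC^{\infty}(\cl{\Omega};\C^{3})$. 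Since $\begin{psmallmatrix}\bm{V}\\\bm{E}\end{psmallmatrix}\in\dom\fA_2$ forces $\tantr\bm{E}=\portOp_{\textup{el}}\tfrac{\dd}{\dd\eta}\bm{V}$ on $\Gamma_{\textup{lat}}$ with $\portOp_{\textup{el}}\tfrac{\dd}{\dd\eta}\bm{V}\in\Lp{2}(\Gamma_{\textup{lat}};\C^{3})$ (and $\tantr\bm{E}=0$ on $\Gamma_{\textup{r}}$ by membership in $\cH_{\Gamma_{\textup{r}}}(\rot,\Omega)$), the definition of the weak tangential trace collapses this dual pairing to the genuine inner product $\scprod{\portOp_{\textup{el}}\tfrac{\dd}{\dd\eta}\bm{V}}{\tanxtr\bm{H}}_{\Lp{2}(\Gamma_{\textup{lat}})}$, and the adjoint relation $\portOp_{\textup{el}}\adjun=\portOp_{\textup{mag}}$ turns it into $\scprod{\tfrac{\dd}{\dd\eta}\bm{V}}{\portOp_{\textup{el}}\adjun\tanxtr\bm{H}}_{\Lp{2}((0,1))}$. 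Thus groups \textbf{(B)} and \textbf{(C)} cancel and only the boundary term from \textbf{(A)} remains, which is the assertion.

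The main obstacle I anticipate is the bookkeeping around the weak tangential trace in group \textbf{(C)}: one must check carefully that $\portOp_{\textup{el}}\tfrac{\dd}{\dd\eta}\bm{V}$ genuinely lies in $\Lp{2}(\Gamma_{\textup{lat}};\C^{3})$, so that the abstract pairing $\dualprod{\cdot}{\cdot}_{\Vtau,\Vtaud}$ really reduces to an $\Lp{2}$-inner product via the definition of $\tantr\bm{E}=f$, and that the regularity $\portOp_{\textup{el}}\adjun\tanxtr\bm{H}\in\Hspace^{1}((0,1);\C^{k})$ — needed both to make $\fA_1$ meaningful and to legitimise the endpoint evaluations of $\bm{I}_{\textup{tot}}$ — indeed follows from the $\conC^{\infty}$-regularity of $\bm{H}$ and the smooth dependence of the cross-sectional contours on $\eta$. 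Everything else is the elementary one-dimensional product rule and the $\rot$ Green identity already established earlier.
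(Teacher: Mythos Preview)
Your proposal is correct and follows essentially the same approach as the paper: expand componentwise, apply the $\rot$ integration-by-parts to obtain the dual pairing $\dualprod{\tantr\bm{E}}{\tanxtr\bm{H}}$, use the domain condition $\tantr\bm{E}=\portOp_{\textup{el}}\tfrac{\dd}{\dd\eta}\bm{V}$ together with $\portOp_{\textup{el}}\adjun$ to convert this into an $\Lp{2}((0,1))$-term that cancels the cross term, and finish with the one-dimensional product rule on $\bm{V}$ and $\bm{I}_{\textup{tot}}$. The only cosmetic difference is that you substitute $\bm{I}=\bm{I}_{\textup{tot}}+\portOp_{\textup{el}}\adjun\tanxtr\bm{H}$ at the outset and organise the computation into groups (A)--(C), whereas the paper keeps $\bm{I}$ intact until after converting the $\rot$ boundary term and then combines; the algebra is identical.
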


\begin{proof}
  Let $\begin{psmallmatrix}\bm{V} \\\bm{E} \end{psmallmatrix} \in \dom \fA_{2}$ and
  $\begin{psmallmatrix} \bm{I} \\\bm{H} \end{psmallmatrix} \in \dom \fA_{1}$. Then
  \begin{align*}
  \MoveEqLeft
    \scprod*{
    \begin{bmatrix}
      -\tfrac{\dd}{\dd \eta} & 0 \\
      0 & -\rot
    \end{bmatrix}
    \begin{pmatrix}
     \bm{V} \\ \bm{E}
    \end{pmatrix}
    }
    {
    \begin{pmatrix}
      \bm{I} \\ \bm{H}
    \end{pmatrix}
    }_{\mkern-8mu\X_1}
    +
    \scprod*{
    \begin{pmatrix}
     \bm{V} \\ \bm{E}
    \end{pmatrix}
    }
    {
    \begin{bmatrix}
      -\tfrac{\dd}{\dd \eta} & \tfrac{\dd}{\dd \eta} \portOp_{\textup{el}}\adjun \tanxtr \\
      0 & \rot
    \end{bmatrix}
    \begin{pmatrix}
      \bm{I} \\ \bm{H}
    \end{pmatrix}
    }_{\mkern-8mu\X_1}
    \\[1ex]
    &=
      \scprod{-\tfrac{\dd}{\dd\eta}\bm{V}}{\bm{I}}_{\Lp{2}((0,1);\C^{k})} + \scprod{\bm{V}}{-\tfrac{\dd}{\dd \eta} (\bm{I}-\portOp_{\textup{el}}\adjun \tanxtr\bm{H})}_{\Lp{2}((0,1);\C^{k})}
      \\[0.5ex]
      &\quad + \underbrace{\scprod{-\rot\bm{E}}{\bm{H}}_{\Lp{2}(\Omega;\C^{3})} +\scprod{\bm{E}}{\rot\bm{H}}_{\Lp{2}(\Omega;\C^{3})}}_{=\mathrlap{\dualprod{\tantr \bm{E}}{\tanxtr \bm{H}}_{\Vtau,\Vtaud}}}
    \intertext{Note that $\dualprod{\tantr \bm{E}}{\tanxtr \bm{H}}_{\Vtau,\Vtaud} = \scprod{\tfrac{\dd}{\dd t} \bm{V}}{\portOp_{\textup{el}}\adjun \tanxtr \bm{H}}_{\Lp{2}((0,1);\C^{k})}$ by the boundary condition of $\fA_{2}$. Hence, we further have}
    &=
    \scprod[\big]{-\tfrac{\dd}{\dd\eta}\bm{V}}{\bm{I} - \portOp_{\textup{el}}\adjun \tanxtr \bm{H}}_{\Lp{2}((0,1);\C^{k})} + \scprod[\big]{\bm{V}}{-\tfrac{\dd}{\dd \eta} (\bm{I}-\portOp_{\textup{el}}\adjun \tanxtr\bm{H})}_{\Lp{2}((0,1);\C^{k})}
    \intertext{integration by parts formula for $\tfrac{\dd}{\dd \eta}$ finally yields}
    &= -\big(\scprod{\bm{V}(1)}{\bm{I}_{\textup{tot}}(1)}_{\C^k} - \scprod{\bm{V}(0)}{\bm{I}_{\textup{tot}}(0)}_{\C^k}\big)
    = \scprod*{\begin{psmallmatrix}\bm{V}(0) \\ -\bm{V}(1)\end{psmallmatrix}}{\begin{psmallmatrix}\bm{I}_{\textup{tot}}(0) \\ \bm{I}_{\textup{tot}}(1)\end{psmallmatrix}}_{\mkern-3mu\C^{2k}},
  \end{align*}
  which finishes the proof.
\end{proof}

\begin{definition}\label{def:bndmap}
Suppose that the spatial domains are as in \Cref{sec:Omega}, and let $\fA_1$, $\fA_2$ be defined as in \eqref{eq:U1U2}. The {\em boundary mappings} $\hat{\fB}_{1}\colon \dom \fA_{1} \to \C^{2k}$ and $\hat{\fB}_{2} \colon \dom \fA_{2} \to \C^{2k}$ are specified by
  \begin{align*}
    \hat{\fB}_{1}\begin{pmatrix}\bm{I} \\\bm{H}\end{pmatrix} = \begin{pmatrix} \bm{I}_{\textup{tot}}(0) \\ \bm{I}_{\textup{tot}}(1)\end{pmatrix}
    \quad\text{and}\quad
    \hat{\fB}_{2}\begin{pmatrix}\bm{V} \\ \bm{E}\end{pmatrix} = \begin{pmatrix} \bm{V}(0) \\ -\bm{V}(1)\end{pmatrix},
  \end{align*}
  where $\bm{I}_{\textup{tot}} \coloneq \bm{I} - \portOp_{\textup{el}}\adjun \tanxtr \bm{H}$.
\end{definition}

By \Cref{le:abstract-int-by-parts-on-dense-domain} we have the abstract integration by parts formula
\begin{align*}
  \scprod{\fA_{2}x_{2}}{x_{1}}_{\X} + \scprod{x_{2}}{\fA_{1}x_{1}}_{\X} = \scprod{\hat{\fB}_{2} x_{2}}{\hat{\fB}_{1} x_{1}}_{\C^{2k}},
\end{align*}
where $x_{1} = \begin{psmallmatrix}\bm{V} \\ \bm{E}\end{psmallmatrix} \in \dom \fA_{1}$ and $x_{2} = \begin{psmallmatrix} \bm{I} \\\bm{H}\end{psmallmatrix} \in \dom \fA_{2}$.

\begin{lemma}\label{th:boundary-mappings-surjective}
Under the preconditions in \Cref{def:bndmap},  $\hat{\fB}_{1}$ and $\hat{\fB}_{2}$ are surjective.
\end{lemma}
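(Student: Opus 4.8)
The plan is to treat $\hat{\fB}_1$ and $\hat{\fB}_2$ separately, since they live on the unrelated domains $\dom\fA_1$ and $\dom\fA_2$. For $\hat{\fB}_1$ the argument is immediate: given $\begin{psmallmatrix}a\\b\end{psmallmatrix}\in\C^{2k}$, take $\bm{H}=0$ and let $\bm{I}\in\Hspace^{1}((0,1);\C^{k})$ be the affine interpolant with $\bm{I}(0)=a$ and $\bm{I}(1)=b$. Since $\dom\fA_1$ imposes no constraint linking $\bm{I}$ to $\bm{H}$ and admits $\bm{H}\in\conC^{\infty}(\cl{\Omega};\C^3)$, we have $\begin{psmallmatrix}\bm{I}\\0\end{psmallmatrix}\in\dom\fA_1$, and because $\bm{H}=0$ gives $\bm{I}_{\textup{tot}}=\bm{I}-\portOp_{\textup{el}}\adjun\tanxtr\bm{H}=\bm{I}$, we obtain $\hat{\fB}_1\begin{psmallmatrix}\bm{I}\\0\end{psmallmatrix}=\begin{psmallmatrix}a\\b\end{psmallmatrix}$.

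For $\hat{\fB}_2$, fix $\begin{psmallmatrix}a\\b\end{psmallmatrix}\in\C^{2k}$ and choose a smooth $\bm{V}\colon[0,1]\to\C^{k}$ with $\bm{V}(0)=a$, $\bm{V}(1)=-b$ that is moreover constant in a neighbourhood of each endpoint, so that $\supp\partial_{\eta}\bm{V}\Subset(0,1)$. Writing $\bm{V}=(\bm{V}_1,\dots,\bm{V}_k)$ componentwise, set $\bm{E}\coloneq\sum_{i=1}^{k}\hat{\portOp}_{i,\textup{el}}\bm{V}_i$ with the bounded lifting operators from \Cref{le:lifting-to-Hrot}. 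Because the cables have pairwise disjoint closures, for $\epsilon$ small the tubular neighbourhoods $\ran\hat{\Phi}_i$ are pairwise disjoint; and since $\partial_{\eta}\bm{V}$ vanishes near the cable ends, each summand $\hat{\portOp}_{i,\textup{el}}\bm{V}_i=\chi_i\grad(\bm{V}_i\circ\hat{\Psi}_i)$ is supported in a compact subset of $\Omega$ disjoint from $\cl{\Gamma_{\textup{r}}}=\cl{\Gamma_{\textup{end}}}\cup\cl{\Gamma_{\textup{ext}}}$. Consequently $\bm{E}\in\Hspace(\rot,\Omega)$ is a limit in the $\Hspace(\rot,\Omega)$-norm of fields from $\Cc_{\Gamma_{\textup{r}}}(\Omega;\C^3)$ (obtained by mollification), whence $\bm{E}\in\cH_{\Gamma_{\textup{r}}}(\rot,\Omega)$; and by \Cref{le:lifting-to-Hrot} together with the disjointness of supports, $\tantr\bm{E}=\portOp_{\textup{el}}\tfrac{\dd}{\dd\eta}\bm{V}$ on $\Gamma_{\textup{lat}}$. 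Hence $\begin{psmallmatrix}\bm{V}\\\bm{E}\end{psmallmatrix}\in\dom\fA_2$ with $\hat{\fB}_2\begin{psmallmatrix}\bm{V}\\\bm{E}\end{psmallmatrix}=\begin{psmallmatrix}\bm{V}(0)\\-\bm{V}(1)\end{psmallmatrix}=\begin{psmallmatrix}a\\b\end{psmallmatrix}$.

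The one step that needs care is confirming that the lifted field genuinely belongs to the subspace $\cH_{\Gamma_{\textup{r}}}(\rot,\Omega)$ of $\Hspace(\rot,\Omega)$, and not merely to $\Hspace(\rot,\Omega)$; this is precisely why a $\bm{V}$ that is locally constant near $0$ and $1$ is convenient, since it forces the lift to be supported away from the end caps $\Gamma_{\textup{end}}$. Alternatively one may keep a general $\bm{V}$ and instead use that $\grad(\bm{V}_i\circ\hat{\Psi}_i)=\bm{V}_i'\,\grad\hat{\Psi}_{i1}$ is, on $\Gamma_{i,\textup{end}}$, parallel to the cable axis, which is exactly the normal direction there, so that its tangential trace vanishes on $\Gamma_{i,\textup{end}}$; this is the same mechanism underlying the lifting construction used earlier. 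Everything else in the argument is elementary.
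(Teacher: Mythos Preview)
Your proof is correct and follows the same strategy as the paper: for $\hat{\fB}_1$ take $\bm{H}=0$ with an affine $\bm{I}$, and for $\hat{\fB}_2$ prescribe $\bm{V}$ with the desired endpoint values and lift via $\hat{\portOp}_{i,\textup{el}}$ from \Cref{le:lifting-to-Hrot}. The paper simply writes $\bm{E}\coloneq\hat{\portOp}_{\textup{el}}\bm{V}$ for a linear $\bm{V}$ and asserts $\begin{psmallmatrix}\bm{V}\\\bm{E}\end{psmallmatrix}\in\dom\fA_2$ without further comment; your choice of a smooth $\bm{V}$ that is constant near the endpoints (forcing the lift to be supported away from $\Gamma_{\textup{end}}$) makes the membership $\bm{E}\in\cH_{\Gamma_{\textup{r}}}(\rot,\Omega)$ explicit, which is a detail the paper leaves implicit.
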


\begin{proof}
  For given $\begin{psmallmatrix}a \\ b\end{psmallmatrix} \in \C^{2k}$ we find a linear function $\bm{I} \colon [0,1] \to \C^{k}$ such that $\bm{I}(0) = a$ and $\bm{I}(1) = b$. Since $\bm{I}$ is a linear interpolation, we automatically have $\bm{I} \in \Hspace^{1}((0,1);\C^{k})$. We set $\bm{H} \coloneq \bm{0}$, which gives $\begin{psmallmatrix} \bm{I} \\ \bm{H} \end{psmallmatrix} \in \dom \fA_{1}$ and
  \begin{equation*}
    \hat{\fB}_{1} \begin{pmatrix} \bm{I} \\ \bm{H} \end{pmatrix}
    = \begin{pmatrix} \bm{I}(0) - \portOp_{\textup{el}}\adjun \tanxtr \bm{0} \\ \bm{I}(1) - \portOp_{\textup{el}}\adjun \tanxtr \bm{0} \end{pmatrix}
    = \begin{pmatrix}a \\ b\end{pmatrix}.
  \end{equation*}
Again by linear interpolation we can find a $\bm{V} \colon [0,1] \to \C^{k}$ such that $\bm{V}(0) = c$ and $-\bm{V}(1) = d$ for given $\begin{psmallmatrix}c \\ d\end{psmallmatrix} \in \C^{2k}$.
  Then we set $\bm{E} \coloneq \hat{\portOp}_{\textup{el}}\bm{V}$ so that $\begin{psmallmatrix}\bm{V} \\ \bm{E} \end{psmallmatrix} \in \dom \fA_{2}$ and
  \begin{equation*}
    \hat{\fB}_{2} \begin{pmatrix} \bm{V} \\ \bm{E} \end{pmatrix}
    = \begin{pmatrix} \bm{V}(0) \\ -\bm{V}(1) \end{pmatrix}
    = \begin{pmatrix}c \\ d\end{pmatrix},
  \end{equation*}
  which proves the assertion.
\end{proof}

\begin{theorem}
Under the preconditions in \Cref{def:bndmap},  $\norm*{\argdot}_{\ran \hat{\fB}_{1}}$ with
  \begin{align*}
    \norm*{\begin{pmatrix} a \\ b\end{pmatrix}}_{\ran \hat{\fB}_{1}} \coloneq
    \inf \dset*{\norm*{\begin{pmatrix}\bm{I} \\ \bm{H}\end{pmatrix}}_{\dom(\fA_{1})}}{\begin{pmatrix} a \\ b\end{pmatrix} = \begin{pmatrix}\bm{I}_{\textup{tot}}(0) \\ \bm{I}_{\textup{tot}}(1)\end{pmatrix}, \begin{pmatrix}\bm{I} \\ \bm{H}\end{pmatrix} \in \dom \fA_{1}}
  \end{align*}
  is a norm on $\C^{2k}$. In particular $\hat{\fB}_{1}\colon \dom \fA_{1} \to \C^{2k}$ is continuous, where $\dom \fA_{1}$ is equipped with the graph norm of $\fA_{1}$.\footnote{$\C^{2k}$ can be equipped with any norm as all of them are equivalent.}
\end{theorem}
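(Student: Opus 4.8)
The plan is to verify directly that $\norm{\argdot}_{\ran \hat{\fB}_{1}}$ satisfies the three norm axioms on $\C^{2k}$; the only subtle point is definiteness, and continuity of $\hat{\fB}_{1}$ then comes for free. By \Cref{th:boundary-mappings-surjective} the infimum is taken over a nonempty set for every element of $\C^{2k}$, so the expression is well defined and nonnegative, and it vanishes at $0$ because $\begin{psmallmatrix}\bm{0}\\\bm{0}\end{psmallmatrix}\in\dom\fA_{1}$ has zero graph norm and $\hat{\fB}_{1}\begin{psmallmatrix}\bm{0}\\\bm{0}\end{psmallmatrix}=0$. Absolute homogeneity is immediate from linearity of $\hat{\fB}_{1}$: for $\lambda\neq 0$, multiplication by $\lambda$ is a bijection between the admissible preimages of $v\in\C^{2k}$ and those of $\lambda v$, and it scales the graph norm by $\abs{\lambda}$; the case $\lambda=0$ is covered above. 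For subadditivity, given $\eps>0$ I would pick near-optimal preimages $\begin{psmallmatrix}\bm{I}_{j}\\\bm{H}_{j}\end{psmallmatrix}\in\dom\fA_{1}$ of $v_{j}$ with $\norm{\begin{psmallmatrix}\bm{I}_{j}\\\bm{H}_{j}\end{psmallmatrix}}_{\dom\fA_{1}}\leq\norm{v_{j}}_{\ran\hat{\fB}_{1}}+\eps$ for $j=1,2$; since $\dom\fA_{1}$ is a linear subspace, their sum is an admissible preimage of $v_{1}+v_{2}$, and the triangle inequality for the graph norm together with $\eps\to0$ yields the claim.

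The main obstacle is definiteness. Suppose $\norm{v}_{\ran\hat{\fB}_{1}}=0$ for some $v\in\C^{2k}$. Then there are $\begin{psmallmatrix}\bm{I}_{n}\\\bm{H}_{n}\end{psmallmatrix}\in\dom\fA_{1}$ with $\hat{\fB}_{1}\begin{psmallmatrix}\bm{I}_{n}\\\bm{H}_{n}\end{psmallmatrix}=v$ for all $n$ and $\norm{\begin{psmallmatrix}\bm{I}_{n}\\\bm{H}_{n}\end{psmallmatrix}}_{\dom\fA_{1}}\to0$, so in particular $\begin{psmallmatrix}\bm{I}_{n}\\\bm{H}_{n}\end{psmallmatrix}\to0$ and $\fA_{1}\begin{psmallmatrix}\bm{I}_{n}\\\bm{H}_{n}\end{psmallmatrix}\to0$ in $\X_{1}$. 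I would then fix an arbitrary $\begin{psmallmatrix}\bm{V}\\\bm{E}\end{psmallmatrix}\in\dom\fA_{2}$ and apply the abstract integration-by-parts identity of \Cref{le:abstract-int-by-parts-on-dense-domain},
\[
  \scprod*{\fA_{2}\begin{psmallmatrix}\bm{V}\\\bm{E}\end{psmallmatrix}}{\begin{psmallmatrix}\bm{I}_{n}\\\bm{H}_{n}\end{psmallmatrix}}_{\X_{1}} + \scprod*{\begin{psmallmatrix}\bm{V}\\\bm{E}\end{psmallmatrix}}{\fA_{1}\begin{psmallmatrix}\bm{I}_{n}\\\bm{H}_{n}\end{psmallmatrix}}_{\X_{1}} = \scprod*{\hat{\fB}_{2}\begin{psmallmatrix}\bm{V}\\\bm{E}\end{psmallmatrix}}{v}_{\C^{2k}}.
\]
By Cauchy--Schwarz both summands on the left tend to $0$, hence $\scprod*{\hat{\fB}_{2}\begin{psmallmatrix}\bm{V}\\\bm{E}\end{psmallmatrix}}{v}_{\C^{2k}}=0$ for every $\begin{psmallmatrix}\bm{V}\\\bm{E}\end{psmallmatrix}\in\dom\fA_{2}$. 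Since $\hat{\fB}_{2}$ is surjective onto $\C^{2k}$ by \Cref{th:boundary-mappings-surjective}, this forces $v=0$, which establishes definiteness.

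Finally, for the ``in particular'' claim, the very definition of the infimum yields $\norm{\hat{\fB}_{1}\begin{psmallmatrix}\bm{I}\\\bm{H}\end{psmallmatrix}}_{\ran\hat{\fB}_{1}}\leq\norm{\begin{psmallmatrix}\bm{I}\\\bm{H}\end{psmallmatrix}}_{\dom\fA_{1}}$ for all $\begin{psmallmatrix}\bm{I}\\\bm{H}\end{psmallmatrix}\in\dom\fA_{1}$; since $\ran\hat{\fB}_{1}=\C^{2k}$ is finite-dimensional and all norms on it are equivalent, $\hat{\fB}_{1}$ is bounded as a map from $\dom\fA_{1}$, equipped with the graph norm, to $\C^{2k}$. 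The genuinely non-routine point in all of this is that definiteness cannot be obtained from a direct estimate on $\hat{\fB}_{1}$ — that is exactly the statement being proven — but must be extracted from the Green pairing with $\fA_{2}$ together with the surjectivity of $\hat{\fB}_{2}$; everything else is the standard verification that a quotient-type construction gives a norm.
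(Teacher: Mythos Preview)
Your proof is correct and follows essentially the same route as the paper's: both dismiss homogeneity and subadditivity as routine quotient-norm verifications, and both establish definiteness by taking a sequence in $\dom\fA_{1}$ with fixed boundary value $v$ and vanishing graph norm, pairing it against an element of $\dom\fA_{2}$ via the Green identity of \Cref{le:abstract-int-by-parts-on-dense-domain}, and invoking surjectivity of $\hat{\fB}_{2}$ (\Cref{th:boundary-mappings-surjective}). The only cosmetic difference is that the paper picks a single $\begin{psmallmatrix}\bm{V}\\\bm{E}\end{psmallmatrix}$ with $\hat{\fB}_{2}\begin{psmallmatrix}\bm{V}\\\bm{E}\end{psmallmatrix}=v$ to obtain $\norm{v}_{\C^{2k}}^{2}\to 0$ directly, whereas you test against all of $\dom\fA_{2}$ and conclude $v\perp\ran\hat{\fB}_{2}=\C^{2k}$; the continuity argument is identical.
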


Note that also $\fB_{2}\colon \dom \fA_{2} \to \C^{2k}$ is continuous, but we do not use this fact as $\fA_{2}$ is already closed.

\begin{proof}
  The property $\norm{\lambda \begin{psmallmatrix}a \\ b\end{psmallmatrix}}_{\ran \hat{\fB}_{1}} = \abs{\lambda} \norm{\begin{psmallmatrix}a \\ b\end{psmallmatrix}}_{\ran \hat{\fB}_{1}}$ is obvious and also the triangle inequality can be shown by applying the infimum on both sides of the triangle inequality for $\norm{\argdot}_{\fA_{1}}$. These are well-known techniques, e.g., for factor spaces. The only tricky step is showing that $\norm{\begin{psmallmatrix}a \\ b\end{psmallmatrix}}_{\ran \hat{\fB}_{1}} = 0$ implies $\begin{psmallmatrix}a \\ b\end{psmallmatrix} = 0$.

  Hence, let $\begin{psmallmatrix}a \\ b\end{psmallmatrix} \in \C^{2k}$ be such that $\norm{\begin{psmallmatrix}a \\ b\end{psmallmatrix}}_{\ran \hat{\fB}_{1}} = 0$. Then there exists a sequence
  $\left(\begin{psmallmatrix}\bm{I}_{n} \\ \bm{H}_{n}\end{psmallmatrix}\right)_{n\in\N}$
  in $\dom \fA_{1}$ such that
  $\begin{psmallmatrix}\bm{I}_{n} \\ \bm{H}_{n}\end{psmallmatrix} \to 0$ w.r.t.\ $\norm{\argdot}_{\fA_{1}}$ and $\begin{psmallmatrix}\bm{I}_{\textup{tot},n}(0) \\ \bm{I}_{\textup{tot},n}(1)\end{psmallmatrix} = \begin{psmallmatrix}a \\ b\end{psmallmatrix}$ for all $n\in\N$.
  Since also $\hat{\fB}_{2}$ is surjective, there exists $\begin{psmallmatrix}\bm{V} \\ \bm{E}\end{psmallmatrix} \in \dom \fA_{2}$ such that $\begin{psmallmatrix}a \\ b\end{psmallmatrix} = \begin{psmallmatrix}\bm{V}(0) \\ -\bm{V}(1)\end{psmallmatrix}$. Therefore,
  \begin{align*}
    \norm*{\begin{pmatrix}a \\ b\end{pmatrix}}_{\C^{2m}}
    &= \scprod*{\begin{pmatrix}a \\ b\end{pmatrix}}{\begin{pmatrix}a \\ b\end{pmatrix}}_{\C^{2k}}
    = \scprod*{\begin{pmatrix}\bm{V}(0) \\ -\bm{V}(1)\end{pmatrix}}{\begin{pmatrix}\bm{I}_{\textup{tot},n}(0) \\ \bm{I}_{\textup{tot},n}(1)\end{pmatrix}}_{\C^{2k}} \\
    &= \scprod*{\fA_{2}\begin{pmatrix}\bm{V} \\ \bm{E}_{n}\end{pmatrix}}{\begin{pmatrix}\bm{I}_{n} \\ \bm{H}_{n}\end{pmatrix}}_{\X_1}
    + \scprod*{\begin{pmatrix}\bm{V} \\ \bm{E} \end{pmatrix}}{\fA_{1} \begin{pmatrix}\bm{I}_{n} \\ \bm{H}_{n}\end{pmatrix}}_{\X_1}
    \to 0,
  \end{align*}
  which implies $\begin{psmallmatrix}a \\ b\end{psmallmatrix} = 0$.

  Since we have shown that $\norm{\argdot}_{\ran \hat{\fB}_{1}}$ is a norm on $\C^{2k}$ we know that it is equivalent to the standard norm on $\C^{2k}$. By construction $\hat{\fB}_{1}$ is continuous w.r.t.\ $\norm{\argdot}_{\ran \hat{\fB}_{2}}$ and therefore also w.r.t.\ the standard norm on $\C^{2k}$.
\end{proof}

Since $\hat{\fB}_{1}\colon \dom \fA_{1} \to \C^{2k}$ is continuous we can continuously extend $\hat{\fB}_{1}$ to $\dom \cl{\fA_{1}}$, where $\cl{\fA_{1}}$ is the operator closure of $\fA_{1}$. We will denote the extension of $\hat{\fB}_{1}$ still by $\hat{\fB}_{1}$.
We immediately get the following corollary from \Cref{le:abstract-int-by-parts-on-dense-domain}.

\begin{corollary}\label{th:int-by-parts-extended}
  Suppose that the spatial domains are as in \Cref{sec:Omega}.
  Further, let $\fA_1$, $\fA_2$ be defined as in \eqref{eq:U1U2}, and let $\hat{\fB}_{1}$, $\hat{\fB}_{2}$ be as in \Cref{def:bndmap}.
  Then, for
  $\begin{psmallmatrix} \bm{I} \\ \bm{H} \end{psmallmatrix} \in \dom \cl{\fA_{1}}$ and
  $\begin{psmallmatrix}\bm{V} \\ \bm{E} \end{psmallmatrix} \in \dom \fA_{2}$
   we have
  \begin{align*}
    \scprod*{\fA_{2}\begin{psmallmatrix}\bm{V} \\ \bm{E} \end{psmallmatrix}}{\begin{psmallmatrix} \bm{I} \\ \bm{H} \end{psmallmatrix}}_{\X_1}
    + \scprod*{\begin{psmallmatrix}\bm{V} \\ \bm{E} \end{psmallmatrix}}{\cl{\fA_{1}}\begin{psmallmatrix} \bm{I} \\ \bm{H} \end{psmallmatrix}}_{\X_1}
    &= \scprod*{\hat{\fB}_{2}\begin{psmallmatrix}\bm{V} \\\bm{E}\end{psmallmatrix}}{\hat{\fB}_{1}\begin{psmallmatrix} \bm{I} \\\bm{H}\end{psmallmatrix}}_{\C^{2k}}
    \\*
    &= \scprod*{\begin{psmallmatrix}\bm{V}(0) \\ -\bm{V}(1)\end{psmallmatrix}}{\begin{psmallmatrix} \bm{I}_{\textup{tot}}(0) \\ \bm{I}_{\textup{tot}}(1)\end{psmallmatrix}}_{\C^{2k}}.
  \end{align*}
\end{corollary}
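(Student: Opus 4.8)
The plan is to deduce the corollary from \Cref{le:abstract-int-by-parts-on-dense-domain} by a density argument, as the sentence immediately preceding the statement already anticipates. The identity in question is known to hold for every $\begin{psmallmatrix}\bm{I}\\\bm{H}\end{psmallmatrix}\in\dom\fA_{1}$, and each of its three terms depends continuously on $\begin{psmallmatrix}\bm{I}\\\bm{H}\end{psmallmatrix}$ with respect to the graph norm of $\fA_{1}$: the two $\X_{1}$-inner products because they are linear in $\begin{psmallmatrix}\bm{I}\\\bm{H}\end{psmallmatrix}$ and in $\fA_{1}\begin{psmallmatrix}\bm{I}\\\bm{H}\end{psmallmatrix}$, and the boundary pairing because $\hat{\fB}_{1}$ has just been shown to be graph-norm continuous and was extended to $\dom\cl{\fA_{1}}$ accordingly. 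Since $\cl{\fA_{1}}$ is by definition the closure of $\fA_{1}$ in the graph norm (and that graph norm agrees with the graph norm of $\cl{\fA_{1}}$ on $\dom\fA_{1}$), the identity must survive the passage to $\dom\cl{\fA_{1}}$.

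To carry this out I would fix $\begin{psmallmatrix}\bm{V}\\\bm{E}\end{psmallmatrix}\in\dom\fA_{2}$ and $\begin{psmallmatrix}\bm{I}\\\bm{H}\end{psmallmatrix}\in\dom\cl{\fA_{1}}$, choose a sequence $\left(\begin{psmallmatrix}\bm{I}_{n}\\\bm{H}_{n}\end{psmallmatrix}\right)_{n\in\N}$ in $\dom\fA_{1}$ with $\begin{psmallmatrix}\bm{I}_{n}\\\bm{H}_{n}\end{psmallmatrix}\to\begin{psmallmatrix}\bm{I}\\\bm{H}\end{psmallmatrix}$ in $\X_{1}$ and $\fA_{1}\begin{psmallmatrix}\bm{I}_{n}\\\bm{H}_{n}\end{psmallmatrix}=\cl{\fA_{1}}\begin{psmallmatrix}\bm{I}_{n}\\\bm{H}_{n}\end{psmallmatrix}\to\cl{\fA_{1}}\begin{psmallmatrix}\bm{I}\\\bm{H}\end{psmallmatrix}$ in $\X_{1}$, write down the identity of \Cref{le:abstract-int-by-parts-on-dense-domain} for each index $n$, and let $n\to\infty$. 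The first left-hand summand then tends to $\scprod*{\fA_{2}\begin{psmallmatrix}\bm{V}\\\bm{E}\end{psmallmatrix}}{\begin{psmallmatrix}\bm{I}\\\bm{H}\end{psmallmatrix}}_{\X_{1}}$ by $\X_{1}$-convergence of the sequence, the second to $\scprod*{\begin{psmallmatrix}\bm{V}\\\bm{E}\end{psmallmatrix}}{\cl{\fA_{1}}\begin{psmallmatrix}\bm{I}\\\bm{H}\end{psmallmatrix}}_{\X_{1}}$ by $\X_{1}$-convergence of $\fA_{1}\begin{psmallmatrix}\bm{I}_{n}\\\bm{H}_{n}\end{psmallmatrix}$, and the right-hand pairing to $\scprod*{\hat{\fB}_{2}\begin{psmallmatrix}\bm{V}\\\bm{E}\end{psmallmatrix}}{\hat{\fB}_{1}\begin{psmallmatrix}\bm{I}\\\bm{H}\end{psmallmatrix}}_{\C^{2k}}$, using that graph-norm convergence of $\begin{psmallmatrix}\bm{I}_{n}\\\bm{H}_{n}\end{psmallmatrix}$ forces $\hat{\fB}_{1}\begin{psmallmatrix}\bm{I}_{n}\\\bm{H}_{n}\end{psmallmatrix}\to\hat{\fB}_{1}\begin{psmallmatrix}\bm{I}\\\bm{H}\end{psmallmatrix}$ in $\C^{2k}$. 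This establishes the first displayed equality. The second is then immediate from \Cref{def:bndmap}: $\hat{\fB}_{2}\begin{psmallmatrix}\bm{V}\\\bm{E}\end{psmallmatrix}=\begin{psmallmatrix}\bm{V}(0)\\-\bm{V}(1)\end{psmallmatrix}$ (these point evaluations being genuine since $\bm{V}\in\Hspace^{1}((0,1);\C^{k})$), while $\begin{psmallmatrix}\bm{I}_{\textup{tot}}(0)\\\bm{I}_{\textup{tot}}(1)\end{psmallmatrix}$ is nothing but our notation for the extended $\hat{\fB}_{1}\begin{psmallmatrix}\bm{I}\\\bm{H}\end{psmallmatrix}=\lim_{n\to\infty}\begin{psmallmatrix}\bm{I}_{\textup{tot},n}(0)\\\bm{I}_{\textup{tot},n}(1)\end{psmallmatrix}$.

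I do not anticipate a genuine obstacle here: all of the substance has already been provided, namely the dense-domain identity of \Cref{le:abstract-int-by-parts-on-dense-domain} and --- crucially --- the graph-norm continuity of $\hat{\fB}_{1}$, which is exactly what licenses the extension to $\dom\cl{\fA_{1}}$ used in the limit. The single point worth stating explicitly is the compatibility of that extension with the approximating sequence, i.e.\ that $\hat{\fB}_{1}\begin{psmallmatrix}\bm{I}_{n}\\\bm{H}_{n}\end{psmallmatrix}\to\hat{\fB}_{1}\begin{psmallmatrix}\bm{I}\\\bm{H}\end{psmallmatrix}$; but this is built into the construction of the continuous extension, so nothing further needs to be verified.
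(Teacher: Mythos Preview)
Your proposal is correct and is exactly the argument the paper has in mind: the corollary is stated as an immediate consequence of \Cref{le:abstract-int-by-parts-on-dense-domain} once the graph-norm continuity of $\hat{\fB}_{1}$ (and its extension to $\dom\cl{\fA_{1}}$) has been established, and your density argument spells out precisely that passage to the limit.
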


Note that this abstract integration by parts formula gives rise to an abstract Green identity for the corresponding block operator
$\cl{\mathfrak{J}} =
\begin{bsmallmatrix}
  0 & \fA_{2} \\
  \cl{\fA_{1}} & 0
\end{bsmallmatrix}$.
In particular, we obtain a boundary triple for $\cl{\mathfrak{J}}$.

\begin{theorem}\label{th:abstract-green-identity}
 Suppose that the spatial domains are as in \Cref{sec:Omega}.
Further, let $\X$ and $\mathfrak{J}$ be defined as in \eqref{eq:Xspace} and \eqref{eq:Jdef}. Further, let $\hat{\fB}_1$, $\hat{\fB}_2$ be as in \Cref{def:bndmap}.

  For
  $e^{i} = (\bm{I}^{i},\bm{H}^{i},\bm{V}^{i},\bm{E}^{i})\in \dom \cl{\mathfrak{J}}$, $i=1,2$,
  we have
  \begin{align*}
    \scprod{\cl{\mathfrak{J}}e^{1}}{e^{2}}_{\mathcal{X}} + \scprod{e^{1}}{\cl{\mathfrak{J}}e^{2}}_{\mathcal{X}}
    &= \scprod*{\hat{\fB}_{1}\begin{psmallmatrix} \bm{I}^{1} \\\bm{H}^{1} \end{psmallmatrix}}{\hat{\fB}_{2}\begin{psmallmatrix}\bm{V}^{2} \\ \bm{E}^{2}\end{psmallmatrix}}_{\C^{2k}}
    +
    \scprod*{\hat{\fB}_{2}\begin{psmallmatrix}\bm{V}^{1} \\\bm{E}^{1}\end{psmallmatrix}}{\hat{\fB}_{1}\begin{psmallmatrix} \bm{I}^{2} \\ \bm{H}^{2}\end{psmallmatrix}}_{\C^{2k}} \\
    &= \scprod*{\begin{psmallmatrix} \bm{I}_{\textup{tot}}^{1}(0) \\ \bm{I}_{\textup{tot}}^{1}(1)\end{psmallmatrix}}{\begin{psmallmatrix}\bm{V}^{2}(0) \\ -\bm{V}^{2}(1)\end{psmallmatrix}}_{\C^{2k}}
    +
    \scprod*{\begin{psmallmatrix}\bm{V}^{1}(0) \\ -\bm{V}^{1}(1)\end{psmallmatrix}}{\begin{psmallmatrix} \bm{I}_{\textup{tot}}^{2}(0) \\ \bm{I}_{\textup{tot}}^{2}(1)\end{psmallmatrix}}_{\C^{2k}}.
  \end{align*}
\end{theorem}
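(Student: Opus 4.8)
The plan is to derive both displayed equalities directly from the abstract integration-by-parts formula of \Cref{th:int-by-parts-extended}, exploiting the $2\times2$ block structure $\cl{\mathfrak{J}} = \begin{bsmallmatrix} 0 & \fA_2 \\ \cl{\fA_1} & 0 \end{bsmallmatrix}$ on $\mathcal{X} = \X_1\times\X_1$. First I would abbreviate $x_1^i = (\bm{I}^i,\bm{H}^i)$ and $x_2^i = (\bm{V}^i,\bm{E}^i)$. Since $e^i\in\dom\cl{\mathfrak{J}} = \dom\cl{\fA_1}\times\dom\fA_2$, we have $x_1^i\in\dom\cl{\fA_1}$ and $x_2^i\in\dom\fA_2$, while the two $\X_1$-blocks of $\cl{\mathfrak{J}}e^i$ are $\fA_2 x_2^i$ and $\cl{\fA_1}x_1^i$. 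Expanding the $\mathcal{X}$-inner products blockwise then gives
\begin{align*}
  \MoveEqLeft
  \scprod{\cl{\mathfrak{J}}e^1}{e^2}_{\mathcal{X}} + \scprod{e^1}{\cl{\mathfrak{J}}e^2}_{\mathcal{X}} \\
  &= \big[\scprod{\fA_2 x_2^1}{x_1^2}_{\X_1} + \scprod{x_2^1}{\cl{\fA_1}x_1^2}_{\X_1}\big] \\
  &\quad + \big[\scprod{\cl{\fA_1}x_1^1}{x_2^2}_{\X_1} + \scprod{x_1^1}{\fA_2 x_2^2}_{\X_1}\big].
\end{align*}

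Next I would treat the two bracketed expressions separately. The first is literally the left-hand side of \Cref{th:int-by-parts-extended} applied with $(\bm{I},\bm{H}) = x_1^2$ and $(\bm{V},\bm{E}) = x_2^1$, so it equals $\scprod{\hat{\fB}_2 x_2^1}{\hat{\fB}_1 x_1^2}_{\C^{2k}}$. For the second, I would apply \Cref{th:int-by-parts-extended} with $(\bm{I},\bm{H}) = x_1^1$ and $(\bm{V},\bm{E}) = x_2^2$ and then take complex conjugates: since $\conj{\scprod{a}{b}} = \scprod{b}{a}$ in both spaces involved, the conjugated identity reads $\scprod{x_1^1}{\fA_2 x_2^2}_{\X_1} + \scprod{\cl{\fA_1}x_1^1}{x_2^2}_{\X_1} = \scprod{\hat{\fB}_1 x_1^1}{\hat{\fB}_2 x_2^2}_{\C^{2k}}$, whose left-hand side is exactly the second bracket. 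Adding the two contributions yields the first asserted equality.

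Finally, the second asserted equality is obtained by simply inserting the definitions $\hat{\fB}_1(\bm{I},\bm{H}) = (\bm{I}_{\textup{tot}}(0),\bm{I}_{\textup{tot}}(1))$ and $\hat{\fB}_2(\bm{V},\bm{E}) = (\bm{V}(0),-\bm{V}(1))$ from \Cref{def:bndmap} into the two $\C^{2k}$-pairings, where the values $\hat{\fB}_1 x_1^i$ are understood via the continuous extension of $\hat{\fB}_1$ to $\dom\cl{\fA_1}$. I do not expect a genuine obstacle; the only points that require care are keeping track of which copy of $\X_1$ each of $x_1^i$, $x_2^i$ inhabits and placing the conjugation correctly when \Cref{th:int-by-parts-extended} is reused for the "swapped" pairing in the second bracket. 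Everything else is an immediate consequence of the already-established formula.
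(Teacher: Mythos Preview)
Your proposal is correct and follows essentially the same approach as the paper: expand the $\mathcal{X}$-inner products using the block structure $\cl{\mathfrak{J}} = \begin{bsmallmatrix} 0 & \fA_2 \\ \cl{\fA_1} & 0 \end{bsmallmatrix}$, apply \Cref{th:int-by-parts-extended} twice (once for each bracketed group of terms), and then insert the definitions of $\hat{\fB}_1$, $\hat{\fB}_2$. Your explicit handling of the conjugation in the second bracket is a slight elaboration on what the paper leaves implicit, but the underlying argument is the same.
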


\begin{proof}
\newcommand{\tmpIntByPartsI}{%
\scprod*{\cl{\fA_{1}} \begin{psmallmatrix} \bm{I}^{1} \\\bm{E}^{1}\end{psmallmatrix}}{\begin{psmallmatrix}\bm{V}^{2} \\\bm{H}^{2}\end{psmallmatrix}}_{\X_1}
+ \scprod*{\begin{psmallmatrix} \bm{I}^{1} \\\bm{E}^{1}\end{psmallmatrix}}{\fA_{2} \begin{psmallmatrix}\bm{V}^{2} \\\bm{H}^{2}\end{psmallmatrix}}_{\X_1}
}
\newcommand{\tmpIntByPartsII}{
\scprod*{\fA_{2} \begin{psmallmatrix}\bm{V}^{1} \\\bm{H}^{1}\end{psmallmatrix}}{\begin{psmallmatrix}\bm{I}^{2} \\\bm{E}^{2}\end{psmallmatrix}}_{\X_1}
+ \scprod*{\begin{psmallmatrix}\bm{V}^{1} \\\bm{H}^{1}\end{psmallmatrix}}{\cl{\fA_{1}}\begin{psmallmatrix}\bm{I}^{2} \\\bm{E}^{2}\end{psmallmatrix}}_{\X_1}
}
\newcommand{\tmpAdjustedWidthI}[1]{\makebox[\widthof{$\displaystyle\tmpIntByPartsI$}][c]{$#1$}}
\newcommand{\tmpAdjustedWidthII}[1]{\makebox[\widthof{$\displaystyle\tmpIntByPartsII$}][c]{$#1$}}
By applying \Cref{th:int-by-parts-extended} twice we obtain
\begingroup\addtolength{\jot}{0.2em}
  \begin{align*}
    \scprod{\cl{\mathfrak{J}}e^{1}}{e^{2}}_\X + \scprod{e^{1}}{\cl{\mathfrak{J}}e^{2}}_\X
    &=
      \scprod*{\begin{psmallmatrix} 0 & \fA_{2} \\ \cl{\fA_{1}} & 0\end{psmallmatrix}e^{1}}{e^{2}}_\X
      +
      \scprod*{e^{1}}{\begin{psmallmatrix} 0 & \fA_{2} \\ \cl{\fA_{1}} & 0\end{psmallmatrix}e^{2}}_\X \\
    &=
      \tmpIntByPartsI \\
      &\hspace{1.3cm}+ \tmpIntByPartsII
    \\
    &=
      {\scprod*{\hat{\fB}_{1}\begin{psmallmatrix}\bm{I}^{1} \\\bm{H}^{1}\end{psmallmatrix}}{\hat{\fB}_{2}\begin{psmallmatrix}\bm{V}^{2} \\ \bm{E}^{2}\end{psmallmatrix}}_{\C^{2k}}}
      +
      {\scprod*{\hat{\fB}_{2}\begin{psmallmatrix}\bm{V}^{1} \\\bm{E}^{2}\end{psmallmatrix}}{\hat{\fB}_{1} \begin{psmallmatrix} \bm{I}^{2} \\ \bm{H}^{2}\end{psmallmatrix}}_{\C^{2k}},}
  \end{align*}
\endgroup
  which proves the assertion.
\end{proof}

Corresponding to $\hat{\fB}_{1}$ and $\hat{\fB}_{2}$ we define the operators on $\dom \cl{\mathfrak{J}}$ by projections. In particular for
$e = (\bm{I},\bm{H},\bm{V},\bm{E})\in \dom \cl{\mathfrak{J}}$ we define
\begin{equation}\label{eq:Bdef}
\begin{aligned}
  \fB_{1} e &=
  \fB_{1} \begin{psmallmatrix} \bm{I} \\ \bm{H} \\ \bm{V} \\ \bm{E} \end{psmallmatrix}
  \coloneq \hat{\fB}_{1} \begin{psmallmatrix} \bm{I} \\ \bm{H} \end{psmallmatrix}
  = \begin{psmallmatrix} \bm{I}_{\textup{tot}}(0) \\ \bm{I}_{\textup{tot}}(1) \end{psmallmatrix},
  \\
  \fB_{2} e &=
  \fB_{2} \begin{psmallmatrix} \bm{I} \\ \bm{H} \\ \bm{V} \\ \bm{E} \end{psmallmatrix}
  \coloneq \hat{\fB}_{2} \begin{psmallmatrix}\bm{V} \\ \bm{E} \end{psmallmatrix}
  = \begin{psmallmatrix}\bm{V}(0) \\ -\bm{V}(1) \end{psmallmatrix}.
\end{aligned}
\end{equation}
This allows us to write the abstract Green identity from \Cref{th:abstract-green-identity} just as
\begin{align}\label{eq:abstract-green-identity}
  \scprod{\cl{\mathfrak{J}}e_{1}}{e_{2}}_{\X} + \scprod{e_{1}}{\cl{\mathfrak{J}}e_{2}}_{\X}
  = \scprod{\fB_{1} e_{1}}{\fB_{2}e_{2}}_{\C^{2k}} + \scprod{\fB_{2}e_{1}}{\fB_{1}e_{2}}_{\C^{2k}}
\end{align}

\begin{corollary}\label{cor:btriple}
 Suppose that the spatial domains are as in \Cref{sec:Omega}.
Further, let $\X$ and $\mathfrak{J}$ be defined as in \eqref{eq:Xspace} and \eqref{eq:Jdef}, and let $\fB_1$, $\fB_2$ be as in \eqref{eq:Bdef}.
Then
  $(\C^{2k},\fB_{1},\fB_{2})$ is a boundary triple (see \Cref{def:boundary-triple}) for $\cl{\mathfrak{J}}$, i.e., the following properties are fulfilled.
  \begin{enumerate}
      \item\label{cor:btriplea} $-\cl{\mathfrak{J}}\adjun \subseteq \cl{\mathfrak{J}}$,
      \item\label{cor:btripleb} The abstract Green identity \eqref{eq:abstract-green-identity} is satisfied for all $e_{1}, e_{2} \in \dom \cl{\mathfrak{J}}$ and
      \item\label{cor:btriplec} $\begin{bsmallmatrix}\fB_{1} \\ \fB_{2}\end{bsmallmatrix} \colon \dom \cl{\mathfrak{J}} \to \begin{bsmallmatrix}\C^{2k} \\ \C^{2k}\end{bsmallmatrix}_{\times}$ is surjective and bounded.
  \end{enumerate}
\end{corollary}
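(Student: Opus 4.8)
The plan is to check, one at a time, the three defining properties of a boundary triple listed in the statement, since each of them is essentially a repackaging of the machinery developed above. Property~\ref{cor:btripleb} is not really a separate assertion: by the definition \eqref{eq:Bdef} of $\fB_{1}$ and $\fB_{2}$, the identity to be verified coincides verbatim with \eqref{eq:abstract-green-identity}, which is \Cref{th:abstract-green-identity}. So it suffices to address \ref{cor:btriplea} and \ref{cor:btriplec}.

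For \ref{cor:btriplec}, I would first record that, since $\cl{\mathfrak{J}} = \begin{bsmallmatrix}0 & \fA_{2}\\\cl{\fA_{1}}&0\end{bsmallmatrix}$ is block-off-diagonal, one has $\dom\cl{\mathfrak{J}} = \dom\cl{\fA_{1}}\times\dom\fA_{2}$ and the graph norm of $\cl{\mathfrak{J}}$ is exactly the sum of the graph norm of $\cl{\fA_{1}}$ on the $(\bm{I},\bm{H})$-block and that of $\fA_{2}$ on the $(\bm{V},\bm{E})$-block. Boundedness of $\begin{bsmallmatrix}\fB_{1}\\\fB_{2}\end{bsmallmatrix}$ then follows because $\fB_{1}$ is the composition of the norm-one projection onto the $(\bm{I},\bm{H})$-block with $\hat{\fB}_{1}$, whose continuity on $\dom\fA_{1}$ was established above and then extended by density to $\dom\cl{\fA_{1}}$, and likewise $\fB_{2}$ is the projection onto the $(\bm{V},\bm{E})$-block followed by the continuous map $\hat{\fB}_{2}$ on $\dom\fA_{2}$. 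Surjectivity is where the block structure helps: $\fB_{1}$ depends only on $(\bm{I},\bm{H})$ and $\fB_{2}$ only on $(\bm{V},\bm{E})$, so given $\begin{psmallmatrix}a\\b\end{psmallmatrix},\begin{psmallmatrix}c\\d\end{psmallmatrix}\in\C^{2k}$ I would invoke \Cref{th:boundary-mappings-surjective} twice to pick $\begin{psmallmatrix}\bm{I}\\\bm{H}\end{psmallmatrix}\in\dom\fA_{1}\subseteq\dom\cl{\fA_{1}}$ with $\hat{\fB}_{1}\begin{psmallmatrix}\bm{I}\\\bm{H}\end{psmallmatrix}=\begin{psmallmatrix}a\\b\end{psmallmatrix}$ and $\begin{psmallmatrix}\bm{V}\\\bm{E}\end{psmallmatrix}\in\dom\fA_{2}$ with $\hat{\fB}_{2}\begin{psmallmatrix}\bm{V}\\\bm{E}\end{psmallmatrix}=\begin{psmallmatrix}c\\d\end{psmallmatrix}$; then $e=(\bm{I},\bm{H},\bm{V},\bm{E})\in\dom\cl{\mathfrak{J}}$ satisfies $\fB_{1}e=\begin{psmallmatrix}a\\b\end{psmallmatrix}$ and $\fB_{2}e=\begin{psmallmatrix}c\\d\end{psmallmatrix}$.

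The real content is \ref{cor:btriplea}, the inclusion $-\cl{\mathfrak{J}}\adjun\subseteq\cl{\mathfrak{J}}$. I would compute the adjoint blockwise from $\cl{\mathfrak{J}}=\begin{bsmallmatrix}0 & \fA_{2}\\\cl{\fA_{1}}&0\end{bsmallmatrix}$, which gives $\cl{\mathfrak{J}}\adjun=\begin{bsmallmatrix}0 & (\cl{\fA_{1}})\adjun\\\fA_{2}\adjun&0\end{bsmallmatrix}$. Now $(\cl{\fA_{1}})\adjun=\fA_{1}\adjun$ (passing to the closure does not change the adjoint), and $\fA_{1}\adjun=-\mathring{\fA}_{2}$ by \Cref{le:A1-as-adjoint-of-A2}; moreover $\fA_{2}\adjun=-\cl{\mathring{\fA}_{1}}$ by the corollary following \Cref{le:A1-as-adjoint-of-A2} (equivalently, adjoin the identity $\mathring{\fA}_{1}\adjun=-\fA_{2}$ and use that $\mathring{\fA}_{1}$ is densely defined and closable). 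Hence $-\cl{\mathfrak{J}}\adjun=\begin{bsmallmatrix}0 & \mathring{\fA}_{2}\\\cl{\mathring{\fA}_{1}}&0\end{bsmallmatrix}$, and the claimed inclusion reduces to the two elementary inclusions $\mathring{\fA}_{2}\subseteq\fA_{2}$ and $\cl{\mathring{\fA}_{1}}\subseteq\cl{\fA_{1}}$. The first is immediate from comparing the domains in \eqref{eq:U1U2} and \eqref{eq:Jring}: $\mathring{\fA}_{2}$ and $\fA_{2}$ carry the same differential expression, and $\dom\mathring{\fA}_{2}$ merely adds the requirement $\bm{V}\in\cH^{1}((0,1);\C^{k})\subseteq\Hspace^{1}((0,1);\C^{k})$. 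The second follows by taking closures in $\mathring{\fA}_{1}\subseteq\fA_{1}$, which in turn is read off the domains in the same way.

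Overall I expect no serious obstacle: the proof is essentially bookkeeping. The one place to be careful is the chain of adjoint and closure identities in \ref{cor:btriplea} --- in particular that what is needed on the $(\bm{I},\bm{H})$-block is $\cl{\mathring{\fA}_{1}}\subseteq\cl{\fA_{1}}$ (the \emph{closures}), not $\mathring{\fA}_{1}\subseteq\cl{\fA_{1}}$, and that this is exactly what taking the closure of $\mathring{\fA}_{1}\subseteq\fA_{1}$ delivers. Everything else is assembly of \Cref{th:boundary-mappings-surjective}, \Cref{le:A1-as-adjoint-of-A2} and the corollary after it, and \Cref{th:abstract-green-identity}.
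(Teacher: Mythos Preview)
Your proposal is correct and matches the paper's proof essentially step for step: part~\ref{cor:btripleb} is \Cref{th:abstract-green-identity}, part~\ref{cor:btriplea} is the same blockwise adjoint computation using \Cref{le:A1-as-adjoint-of-A2} and the inclusions \eqref{eq:minimal-subset-maximal-operator}, and surjectivity in \ref{cor:btriplec} comes from \Cref{th:boundary-mappings-surjective}. The only deviation is that for boundedness in \ref{cor:btriplec} the paper simply cites an external lemma (\cite[Lem.~2.4.7]{Sk-Phd}), whereas your direct argument via the block structure and continuity of $\hat{\fB}_{1}$, $\hat{\fB}_{2}$ is equally valid and more self-contained.
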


\begin{proof}
  We prove these three statements in the given order.
 \begin{enumerate}
   \item Note that
   $\mathfrak{J} = \begin{bsmallmatrix} 0 & \fA_{2} \\ \fA_{1} & 0\end{bsmallmatrix}$ and therefore
   $\cl{\mathfrak{J}} = \begin{bsmallmatrix} 0 & \fA_{2} \\ \cl{\fA_{1}} & 0\end{bsmallmatrix}$. Hence, by \eqref{eq:minimal-subset-maximal-operator} and \Cref{le:A1-as-adjoint-of-A2} follows
   \begin{align*}
     \cl{\mathfrak{J}}\adjun = \mathfrak{J}\adjun
     = \begin{bsmallmatrix} 0 & \fA_{2} \\ \fA_{1} & 0\end{bsmallmatrix}\adjun
     = \begin{bsmallmatrix} 0 & \fA_{1}\adjun \\ \fA_{2}\adjun & 0\end{bsmallmatrix}
     = \begin{bsmallmatrix} 0 & -\mathring{\fA}_{2} \\ -\cl{\mathring{\fA}_{1}} & 0\end{bsmallmatrix}
     \subseteq -\begin{bsmallmatrix} 0 & \fA_{2} \\
     \cl{\fA_{1}} & 0\end{bsmallmatrix} = -\cl{\mathfrak{J}}.
   \end{align*}

   \item The abstract Green identity follows from the definition of $\fB_{1}$ and $\fB_{2}$, and \Cref{th:abstract-green-identity}

   \item Surjectivity follows from \Cref{th:boundary-mappings-surjective}. Boundedness is a~consequence of \cite[Lem.~2.4.7]{Sk-Phd}.\qedhere

 \end{enumerate}
\end{proof}

In the following theorem we will parameterize boundary conditions for our coupled systems that are more or less the boundary conditions for the telegraphers equations in port-Hamiltonian modeling (in the sense of \cite{JaZw12}) alone. This means that the allegedly more difficult Maxwell system does not have a big impact for the parametrization of the boundary conditions, which is surprising.

\begin{theorem}\label{th:restrictions-of-J-max-dissipative}
Suppose that \Cref{ass:bndcont} is satisfied, and the spatial domains are as in \Cref{sec:Omega}.
Further, let $\X$ and $\mathfrak{J}$ be defined as in \eqref{eq:Xspace} and \eqref{eq:Jdef}, and let $\fB_1$, $\fB_2$ be as in \eqref{eq:Bdef}. Further, assume that $W_B\in\C^{2k\times 4k}$ has the property as in \Cref{ass:bndcont}.
%
Then the operator defined by
\begin{subequations}\label{eq:fAdef}
      \begin{align}
    \fA e = \cl{\mathfrak{J}}e =
    \begin{bmatrix}
      0 & \cl{\fA_{2}} \\
      \fA_{1} & 0
    \end{bmatrix}
    e
  \end{align}
  with domain
  \begin{align}
    \dom \fA = \dset*{e \in \dom \cl{\mathfrak{J}}}
    {
    W_B
    \begin{bmatrix}
      \fB_{1} e \\ \fB_{2} e
    \end{bmatrix}
    = 0
    }
  \end{align}
\end{subequations}
  is maximally dissipative.
\end{theorem}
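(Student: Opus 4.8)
The plan is to combine the abstract Green identity \eqref{eq:abstract-green-identity} with an elementary Kre\u{\i}n--space argument for dissipativity, and then to identify $\fA\adjun$ by means of the boundary triple from \Cref{cor:btriple} to obtain maximality. Throughout I write $J\coloneq\begin{bsmallmatrix}0 & \id_{2k}\\ \id_{2k} & 0\end{bsmallmatrix}\in\C^{4k\times 4k}$ and $\fB\coloneq\begin{bsmallmatrix}\fB_1\\ \fB_2\end{bsmallmatrix}\colon\dom\cl{\mathfrak J}\to\C^{4k}$; since $J=J^*$ and $J^2=\id$, the form $J$ has inertia $(2k,2k,0)$, and \eqref{eq:abstract-green-identity} specialized to $e_1=e_2=e$ reads $2\Re\scprod{\cl{\mathfrak J}e}{e}_{\X}=(\fB e)^* J\,(\fB e)$. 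In this notation $\dom\fA=\dset{e\in\dom\cl{\mathfrak J}}{\fB e\in\ker W_B}$, and \Cref{ass:bndcont} states that $W_B$ has full row rank $2k$ and $W_B J W_B^*\ge 0$.

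\emph{Dissipativity.} It suffices to show that $\ker W_B$ is $J$-nonpositive, since then $2\Re\scprod{\fA e}{e}_{\X}=(\fB e)^* J(\fB e)\le 0$ for every $e\in\dom\fA$. Now $\ran W_B^*$ is $J$-nonnegative (for $z=W_B^* w$ one has $z^* J z=w^*(W_B J W_B^*)w\ge 0$) and, by the rank hypothesis, has dimension $2k$, the maximal dimension of a $J$-nonnegative subspace; hence $\ran W_B^*$ is maximal $J$-nonnegative. Writing $\C^{4k}=V_+\oplus V_-$ with $V_\pm\coloneq\ker(J\mp\id)$, so that $z^* J z=\norm{z_+}^2-\norm{z_-}^2$ for $z=z_++z_-$, such a subspace is the graph $\dset{x+Kx}{x\in V_+}$ of a linear contraction $K\colon V_+\to V_-$; consequently $\ker W_B=(\ran W_B^*)^\perp=\dset{-K^* y+y}{y\in V_-}$, on which $z^* J z=\norm{K^* y}^2-\norm{y}^2\le 0$. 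This gives dissipativity of $\fA$.

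\emph{Maximality.} By \Cref{cor:btriple}, $(\C^{2k},\fB_1,\fB_2)$ is a boundary triple for $\cl{\mathfrak J}$ with $-\cl{\mathfrak J}\adjun\subseteq\cl{\mathfrak J}$. Set $A_{\min}\coloneq-\cl{\mathfrak J}\adjun=\begin{bsmallmatrix}0 & \mathring{\fA}_2\\ \cl{\mathring{\fA}_1} & 0\end{bsmallmatrix}$ (using \Cref{le:A1-as-adjoint-of-A2} and the corollary following it); then $A_{\min}$ is densely defined, $A_{\min}\adjun=-\cl{\mathfrak J}$, $A_{\min}\subseteq\fA$, and $\dom A_{\min}=\ker\fB$ (the last from properties (ii)--(iii) of the triple and invertibility of $J$). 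A standard computation using the Green identity and surjectivity of $\fB$ then yields
\[
  \fA\adjun=-\cl{\mathfrak J}\big|_{\dom\fA\adjun},\qquad \dom\fA\adjun=\dset{e\in\dom\cl{\mathfrak J}}{\fB e\in J\,\ran W_B^*}.
\]
Hence, for $e\in\dom\fA\adjun$, writing $\fB e=J W_B^* w$, we get $2\Re\scprod{\fA\adjun e}{e}_{\X}=-(\fB e)^* J(\fB e)=-w^*(W_B J W_B^*)w\le 0$, so $\fA\adjun$ is dissipative as well. Since $\fA$ is densely defined ($\dom\fA\supseteq\dom A_{\min}$) and closed (it is the restriction of the closed operator $\cl{\mathfrak J}$ by the continuous condition $W_B\fB e=0$, continuity of $\fB$ on $\dom\cl{\mathfrak J}$ being part of \Cref{cor:btriple}), it follows that $\id-\fA$ is injective with closed range and $\ran(\id-\fA)^\perp=\ker(\id-\fA\adjun)=\sset{0}$ by dissipativity of $\fA\adjun$; therefore $\ran(\id-\fA)=\X$, and the Lumer--Phillips theorem shows that $\fA$ is maximally dissipative.

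I expect the precise form of $\dom\fA\adjun$ — obtaining the correct ``companion'' boundary condition for the adjoint in this skew, non-self-adjoint setting — to be the main technical point; it rests on the surjectivity of $\fB$ in \Cref{cor:btriple} and on $A_{\min}\adjun=-\cl{\mathfrak J}$. If one prefers to avoid reproving this, maximality follows at once from the abstract characterization of maximally dissipative restrictions defined through a boundary triple (in the spirit of \cite{Sk-Phd}): $\cl{\mathfrak J}|_{\ker(W_B\fB)}$ is maximally dissipative exactly when $W_B$ has full row rank $2k$ and $W_B J W_B^*\ge 0$, which is \Cref{ass:bndcont}.
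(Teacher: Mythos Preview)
Your argument is correct, but it takes a different route from the paper. The paper's proof is only a few lines: it partitions $W_B=[W_1\ W_2]$, invokes \Cref{th:W1-W2-condition-implies-max-dissipativ} to conclude that the linear relation $\ker W_B$ on $\C^{2k}$ is maximally dissipative, and then applies the abstract \Cref{th:boundary-triple-maximal-dissipative} (together with the boundary triple from \Cref{cor:btriple}) to transfer this to the operator $\fA$. In other words, the paper does precisely what you sketch in your final paragraph.

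What you do instead is unfold that machinery by hand. Your dissipativity step is an explicit Kre\u{\i}n-space argument (maximal $J$-nonnegativity of $\ran W_B^*$ via the angular operator) rather than a citation of \Cref{th:W1-W2-condition-implies-max-dissipativ}, and your maximality step computes $\dom\fA\adjun$ directly and shows $\fA\adjun$ is dissipative, then closes via Lumer--Phillips. This is more self-contained and makes transparent exactly which finite-dimensional computation drives the result, at the cost of length; the paper's approach is terser and highlights that the conclusion is a formal consequence of the boundary-triple framework once \Cref{cor:btriple} is in place. Both are valid; since the paper already has \Cref{th:boundary-triple-maximal-dissipative} and \Cref{th:W1-W2-condition-implies-max-dissipativ} in its appendix, its route is the natural one here.
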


\begin{proof}
  Partitioning $W_B = \begin{bmatrix} W_1 & W_2 \end{bmatrix}$ with $W_1,W_2\in\C^{2k\times 2k}$, we can equivalently write the boundary condition in $\dom \fA$ as
  \begin{equation*}
    \begin{bmatrix}
      \fB_{1} x \\ \fB_{2} x
    \end{bmatrix}
    \in
    \ker \begin{bmatrix} W_{1} & W_{2}\end{bmatrix}
  \end{equation*}
As $W_{1},W_{2}$ satisfy the conditions of \Cref{th:W1-W2-condition-implies-max-dissipativ} we have that $\ker  \begin{bmatrix} W_{1} & W_{2}\end{bmatrix}=\ker W_B$ is a~maximally dissipative linear relation.
Now using that, by \Cref{cor:btriple}, $(\C^{2k},\fB_{1},\fB_{2})$ is a boundary triple for $\cl{\mathfrak{J}}$, the result follows from \Cref{th:boundary-triple-maximal-dissipative}.
\end{proof}

The requirement for boundedness and positivity on $\bm{C}$, $\bm{L}$, $\bm{\epsilon}$ and $\bm{\mu}$, as stated in \Cref{ass:tl} and~\ref{ass:Maxwell}, implies that the operator $\hamiltonian\colon \mathcal{X}\to\mathcal{X}$, as defined in \eqref{eq:stateeffortH}, is strictly positive and bounded.
%
Consequently, we define
\begin{equation}\label{eq:Hnorm}
  \scprod*{\begin{psmallmatrix}\bm{\psi}_{1}\\ \bm{B}_{1}\\ \bm{q}_{1}\\ \bm{D}_{1}\end{psmallmatrix}}{\begin{psmallmatrix}\bm{\psi}_{2} \\ \bm{B}_{2} \\ \bm{q}_{2} \\ \bm{D}_{2} \end{psmallmatrix}}_{\hamiltonian}
  \coloneq
  \scprod*{\begin{psmallmatrix}\bm{\psi}_{1} \\ \bm{B}_{1} \\ \bm{q}_{1} \\ \bm{D}_{1} \end{psmallmatrix}}{\hamiltonian\begin{psmallmatrix}\bm{\psi}_{2}\\ \bm{B}_{2} \\ \bm{q}_{2} \\ \bm{D}_{2} \end{psmallmatrix}}_\X
\end{equation}
which is an equivalent inner product on $\mathcal{X}$. We denote $\mathcal{X}$ endowed with $\scprod{\argdot}{\argdot}_{\hamiltonian}$ as $\XH$, i.e., the only difference between $\mathcal{X}$ and $\XH$ is the choice of the inner product, but the topology is the same. The corresponding norm $\norm{\argdot}_{\hamiltonian} = \sqrt{\scprod{\argdot}{\argdot}_{\hamiltonian}}$ is also called the energy norm.



\begin{theorem}\label{thm:semigroup}
  Suppose that \Cref{ass:tl},~\ref{ass:bndcont} and~\ref{ass:Maxwell} are satisfied, and the spatial domains are as in \Cref{sec:Omega}.
  Further, let the operators and spaces $\mathfrak{J}$, $\mathfrak{R}$, $\hamiltonian$ and $\X$ be defined as in \eqref{eq:gensetup} and \eqref{eq:Jdef}, and let $\fB_1$, $\fB_2$ be as in \eqref{eq:Bdef}.
  Then the operator $F$ defined by the restriction of $\cl{\mathfrak{J}}-\mathfrak{R}$ to
  \begin{equation*}
    \dom F = \dset*{e \in \dom \cl{\mathfrak{J}}}
    {%
      W_B
      \begin{pmatrix}
        \fB_{1} e \\ \fB_{2} e
      \end{pmatrix}
      = 0
    }
  \end{equation*}
  is maximally dissipative.
  Further, the following holds:
  \begin{enumerate}[label=\textup{(\alph{*})}]
    \item\label{thm:semigroupa} $F\hamiltonian$ generates a~strongly continuous semigroup on $\mathcal{X}$.
          This semigroup is contractive with respect to the norm $\norm{\argdot}_{\hamiltonian}$ as in \eqref{eq:Hnorm}.

    \item\label{thm:semigroupb} If $\mathfrak{R}=0$ and
          \begin{equation}\label{eq:WB0}
            W_{B}
            \begin{bsmallmatrix}
              0 & \id_{{2k}} \\
              \id_{{2k}} & 0
            \end{bsmallmatrix}
            W_{B}\adjun = 0,
          \end{equation}
          then $F$ is skew-adjoint. That is, $F\adjun=-F$.

    \item\label{thm:semigroupc} If \eqref{eq:WB0} holds, then $F\hamiltonian$ generates a~strongly continuous group on $\mathcal{X}$. If, further, $\mathfrak{R}=0$, then this group is unitary with respect to the norm $\norm{\argdot}_{\hamiltonian}$ as in \eqref{eq:Hnorm}.
  \end{enumerate}
\end{theorem}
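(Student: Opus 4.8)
The plan is to reduce every assertion to the maximal dissipativity of the boundary‑restricted operator from \Cref{th:restrictions-of-J-max-dissipative}; I write $\fA$ for that operator, i.e.\ $\cl{\mathfrak J}$ with the domain appearing there, so that $F=\fA-\mathfrak{R}|_{\dom\fA}$ (the two domains coincide since $\mathfrak{R}$ is bounded and everywhere defined). First I would record that \Cref{ass:tl} and \Cref{ass:Maxwell} force $\bm R+\bm R\adjun\ge0$, $\bm G+\bm G\adjun\ge0$ and $\bm\sigma+\bm\sigma\adjun\ge0$ a.e., whence $\Re\scprod{\mathfrak{R}e}{e}_\X\ge0$ for all $e\in\X$; that is, $\mathfrak{R}$ is bounded and accretive. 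Since $\fA$ is maximally dissipative, $F$ is dissipative, and for $\lambda>\norm{\mathfrak{R}}$ the identity $\lambda-F=(\lambda-\fA)\bigl(\id_\X+(\lambda-\fA)^{-1}\mathfrak{R}\bigr)$ exhibits $\lambda-F$ as a product of boundedly invertible operators (using $\norm{(\lambda-\fA)^{-1}}\le\lambda^{-1}$). Hence $F$ is maximally dissipative, in particular densely defined and closed.

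For part (a) I would pass to $\XH$. There $F\hamiltonian$ is dissipative, since $\Re\scprod{F\hamiltonian x}{x}_\hamiltonian=\Re\scprod{F(\hamiltonian x)}{\hamiltonian x}_\X\le0$ for $x\in\dom(F\hamiltonian)=\hamiltonian^{-1}\dom F$. For the range condition I would isolate the elementary identity $(G\hamiltonian)^{\ast_\hamiltonian}=G\adjun\hamiltonian$, with $\dom\bigl((G\hamiltonian)^{\ast_\hamiltonian}\bigr)=\hamiltonian^{-1}\dom(G\adjun)$, valid for any densely defined $G$ and immediate from $\hamiltonian=\hamiltonian\adjun$ by carrying $\hamiltonian$ across the inner product; here $(\,\cdot\,)^{\ast_\hamiltonian}$ denotes the adjoint with respect to $\scprod{\argdot}{\argdot}_\hamiltonian$. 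Taking $G=F$ and using that the $\X$‑adjoint $F\adjun$ of the maximally dissipative $F$ is again dissipative, we get that $(F\hamiltonian)^{\ast_\hamiltonian}=F\adjun\hamiltonian$ is dissipative on $\XH$; since $F\hamiltonian$ is densely defined, closed, dissipative with dissipative adjoint, it is maximally dissipative on $\XH$ and thus generates a contraction $C_0$‑semigroup there. As $\XH$ and $\X$ carry the same topology, this is a $C_0$‑semigroup on $\X$, contractive for $\norm{\argdot}_\hamiltonian$. (Alternatively one checks $\ran(\id_\X-F\hamiltonian)=\X$ by hand: substituting $z=\hamiltonian x$ reduces it to surjectivity of $\hamiltonian^{-1}-F=\bigl(-F+(\hamiltonian^{-1}-c_0\id_\X)\bigr)+c_0\id_\X$ with $c_0:=\norm{\hamiltonian}^{-1}>0$, the bracketed operator being m‑accretive, so the sum is boundedly invertible.)

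For part (b) I would assume $\mathfrak{R}=0$ and \eqref{eq:WB0}, so $F=\fA$ and \eqref{eq:WB0} makes the Hermitian matrix $W_B\begin{bsmallmatrix} 0 & \id_{2k} \\ \id_{2k} & 0 \end{bsmallmatrix}W_B\adjun$ vanish, hence both positive and negative semidefinite. Then $F$ is maximally dissipative by \Cref{th:restrictions-of-J-max-dissipative}, and applying the same result to $-\cl{\mathfrak J}$ — which admits the boundary triple $(\C^{2k},\fB_1,-\fB_2)$, in terms of which the constraint defining $\dom F$ is governed by $W_B\begin{bsmallmatrix} \id_{2k} & 0 \\ 0 & -\id_{2k} \end{bsmallmatrix}$, whose associated form equals $-W_B\begin{bsmallmatrix} 0 & \id_{2k} \\ \id_{2k} & 0 \end{bsmallmatrix}W_B\adjun=0\ge0$ — shows $-F$ is maximally dissipative as well. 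An operator $F$ for which both $F$ and $-F$ are maximally dissipative generates a $C_0$‑group of contractions, which on a Hilbert space is a group of unitaries; hence $F\adjun=-F$.

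For part (c): whenever \eqref{eq:WB0} holds, the argument of part (b) shows that $\fA=\cl{\mathfrak J}|_{\dom F}$ is skew‑adjoint on $\X$ (this does not use $\mathfrak{R}=0$). The identity $(G\hamiltonian)^{\ast_\hamiltonian}=G\adjun\hamiltonian$ with $G=\fA$ then gives $(\fA\hamiltonian)^{\ast_\hamiltonian}=\fA\adjun\hamiltonian=-\fA\hamiltonian$ on the same domain, so $\fA\hamiltonian$ is skew‑adjoint on $\XH$ and Stone's theorem yields a unitary $C_0$‑group on $\XH$. Since $F\hamiltonian=\fA\hamiltonian-\mathfrak{R}\hamiltonian$ is a bounded perturbation of this group generator, $F\hamiltonian$ generates a $C_0$‑group on $\XH=\X$; and if in addition $\mathfrak{R}=0$, then $F\hamiltonian=\fA\hamiltonian$ is skew‑adjoint on $\XH$, so the group is unitary for $\norm{\argdot}_\hamiltonian$. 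I expect the only genuinely delicate point to be the range condition in part (a): naive perturbation does not help there because $\hamiltonian^{-1}$ is not a scalar multiple of $\id_\X$, and the clean remedy is the adjoint identity above, which is why I would set it up first.
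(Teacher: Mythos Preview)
Your proof is correct and follows essentially the same architecture as the paper's: reduce to the maximal dissipativity of $\fA$ from \Cref{th:restrictions-of-J-max-dissipative}, absorb the bounded accretive $\mathfrak{R}$, pass to $\XH$ via the self-adjointness of $\hamiltonian$, and treat the group case as a bounded perturbation of a Stone generator. Where you spell out the Neumann-series range argument and the adjoint identity $(G\hamiltonian)^{\ast_{\hamiltonian}}=G\adjun\hamiltonian$, the paper simply asserts that a bounded dissipative perturbation preserves maximal dissipativity and that ``maximal dissipativity of $F$ directly implies that $F\hamiltonian$ is maximally dissipative on $\XH$''; your version is more explicit but not different in substance. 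The one genuine divergence is part~(b): the paper observes that \eqref{eq:WB0} makes the linear relation $\ker W_B$ skew-adjoint and invokes the boundary-triple correspondence to transfer this to $F$, whereas you instead flip the sign, note that $(\C^{2k},\fB_1,-\fB_2)$ is a boundary triple for $-\cl{\mathfrak J}$, check that the transformed matrix $W_B\begin{bsmallmatrix}\id_{2k}&0\\0&-\id_{2k}\end{bsmallmatrix}$ again satisfies the positivity hypothesis, and conclude that both $\pm F$ are m-dissipative, hence $F$ generates a contractive $C_0$-group, hence a unitary one, hence $F\adjun=-F$. Your route is slightly longer but entirely self-contained (it avoids citing the relation-theoretic fact that skew-adjoint $\Theta$ yields skew-adjoint $\fA_\Theta$); the paper's route is shorter but leans on that piece of boundary-triple machinery.
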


\begin{proof}
  The operator $\fA$ as in \eqref{eq:fAdef}  maximally dissipative by \Cref{th:restrictions-of-J-max-dissipative}. By using the $\mathfrak{R}$ is bounded and dissipative, we can conclude that $F=\fA-\mathfrak{R}$ is maximally dissipative as well.
  \begin{enumerate}[label=\textup{(\alph{*})}]
    \item Maximal dissipativity of $F$ directly implies that $F\hamiltonian$ is maximally dissipative when $\mathcal{X}$ is equipped with the norm $\norm{\argdot}_{\hamiltonian}$. Therefore, the Lumer-Phillips theorem \cite[Prop.~3.8.4]{TuWe09} yields that $F\hamiltonian$ generates a contractive semigroup on $\mathcal{X}$ equipped with $\norm{\argdot}_{\hamiltonian}$. With the assistance of norm equivalence, this semigroup is also strongly continuous on $\mathcal{X}$ when equipped with the standard norm.

    \item Note that the condition \eqref{eq:WB0} on $W_{B} = \begin{bsmallmatrix}W_{1} & W_{2}\end{bsmallmatrix}$ implies that $\ker W_{B} = \ker \begin{bsmallmatrix}W_{1} & W_{2}\end{bsmallmatrix}$ is skew-adjoint. Hence, this property passes on to $F$.

    \item If $\mathfrak{R} = 0$ and \eqref{eq:WB0} are satisfied, we can use \ref{thm:semigroupb} to deduce that $F\hamiltonian$ is skew-adjoint with respect to the norm $\norm{\argdot}_{\hamiltonian}$.
    Then, by \cite[Thm.~3.8.6]{TuWe09},  $F\hamiltonian$ generates a unitary group on $\mathcal{X}$ equipped with $\norm{\argdot}_{\hamiltonian}$. Norm equivalence yields that this group exhibits strong continuity on $\mathcal{X}$ equipped with the standard norm.

    \noindent If $\mathfrak{R}$ is not necessarily the zero operator, we observe that $F\mathcal{H}$ represents a bounded perturbation of the generator of a $\conC_{0}$-group on $\mathcal{X}$. Utilizing \cite[Thm.~2.11.2]{TuWe09}, we can assert that $F\mathcal{H}$ itself generates a $\conC_{0}$-group.
    \qedhere
  \end{enumerate}
\end{proof}

\section{System node representation and well-posedness}\label{sec:sysnodes}

Here, we further explore the systems-theoretic analysis of the coupled field-cable system by examining its input-output characteristics. To accomplish this, we unitlize the system node framework established by {\sc Staffans} in \cite{St05}. Assume that $\mathcal{X}$, $\mathcal{U}$, and $\mathcal{Y}$ are Hilbert spaces. Our system is represented in the form
\begin{equation}
    \dot{x}(t)= A\&B\,\begin{psmallmatrix}x(t)\\u(t)\end{psmallmatrix},\quad
    y(t)= C\&D\,\begin{psmallmatrix}x(t)\\u(t)\end{psmallmatrix},
\label{eq:ODEnode}
\end{equation}
where $A\&B\colon \dom(A\&B)\subset \mathcal{X}\times \mathcal{U}\to \mathcal{X}$, $C\&D\colon \dom(C\&D)\subset \mathcal{X}\times \mathcal{U}\to\mathcal{\mathcal{Y}}$
are linear operators with specific properties detailed subsequently. Unlike the finite-dimensional case, the operators $A\&B$ and $C\&D$ do not separate into distinct components corresponding to the state and input. This is primarily motivated by the application of boundary control for partial differential equations.

\noindent The autonomous dynamics (i.e., those with trivial input $u=0$) are determined by the \emph{main operator}
$A\colon \dom(A)\subset X\to X$
with $\dom(A) \coloneq \dset{x\in X}{\spvek x0\in\dom(A\&B)}$ and $Ax \coloneq A\&B\spvek x0$ for all $x\in\dom(A)$.
Next, we state the essential conditions on the operators $A\&B$ and $C\&D$ under which they constitute a system node.

\begin{definition}[System node]\label{def:sysnode}
  A \emph{system node} on the triple $(\mathcal{X},\mathcal{U},\mathcal{Y})$ of Hilbert spaces is a~linear operator $S =\sbvek{A\&B}{C\&D}$ with $A\&B\colon \dom(A\&B)\subset \mathcal{X}\times \mathcal{U}\to \mathcal{X}$, $C\&D\colon \dom(C\&D)\subset \mathcal{X}\times \mathcal{U}\to \mathcal{Y}$ satisfying the following conditions:
  \begin{enumerate}[label=(\roman{*})]
    \item $A\&B$ is closed.
    \item $C\&D\in \Lb(\dom (A\&B),\mathcal{Y})$.
    \item For all $u\in \mathcal{U}$, there exists some $x\in \mathcal{X}$ with $\spvek{x}{u}\in \dom(S)$.
    \item The main operator $A$ is the generator of a~strongly continuous semigroup $\fA(\argdot)\colon
          \R_{\ge 0}\to \Lb(\mathcal{X})$ on $\mathcal{X}$.
  \end{enumerate}
\end{definition}
Next, we define our solution concepts for \eqref{eq:ODEnode}.

\begin{definition}[Classical/generalized trajectories]\label{def:traj}
  Let $S = \sbvek{A\& B}{C\& D}$ be a~system node  on $(\mathcal{X},\mathcal{U},\mathcal{Y})$, and let $T\in\R_{>0}$.\\
  A \emph{classical trajectory} for \eqref{eq:ODEnode} on $[0,T]$ is a triple
  \[
    (x,u,y)\,\in\,\conC^{1}([0,T];\mathcal{X})\times \conC^{}([0,T];\mathcal{U})\times \conC^{}([0,T];\mathcal{Y})
  \]
  which for all $t\in[0,T]$ satisfies \eqref{eq:ODEnode}.\\
  A \emph{generalized trajectory} for \eqref{eq:ODEnode} on $[0,T]$ is a~limit
  of classical trajectories for \eqref{eq:ODEnode} on $[0,T]$ in the topology of $\conC^{}([0,T];\mathcal{X})\times \Lp{2}([0,T];\mathcal{U})\times \Lp{2}([0,T];\mathcal{Y})$.\\
\end{definition}

It is shown in {\cite[Thm.~4.3.9]{St05}} that, if $x_0\in \mathcal{X}$ and $u\in W^{1,2}([0,T];\mathcal{U})$ with $\sbvek{x_0}{u(0)}\in \dom (A\&B))$, then there exist unique $x\in \conC^{}([0,T];\mathcal{X})$ with $x(0)=x_0$ and $y\in \Lp{2}([0,T];\mathcal{Y})$, such that $(x,u,y)$ is
a~classical trajectory for \eqref{eq:ODEnode}.

\noindent Well-posed systems are those with the property that the output and state depend continuously on the initial state and input.
\begin{definition}[Well-posed systems]\label{def:wp}
  Let $S = \sbvek{A\& B}{C\& D}$ be a~system node  on $(\mathcal{X},\mathcal{U},\mathcal{Y})$. The system \eqref{eq:ODEnode} is called \emph{well-posed}, if for some (and hence all) $t>0$, there exists some $c_t>0$, such that the classical (and thus also the generalized) trajectories for \eqref{eq:ODEnode} on $[0,t]$ fulfill
  \begin{equation}\label{eq:wp}
    \norm{x(t)}_{\mathcal{X}} + \norm{y}_{\Lp{2}([0,t];\mathcal{Y})} \leq
    c_t\big(\norm{x(0)}_{\mathcal{X}}
    + \norm{u}_{\Lp{2}([0,t];\mathcal{U})}\big).
  \end{equation}
\end{definition}


Now we define the system node corresponding to the coupled field-cable system. To this end, under \Cref{ass:tl},~\ref{ass:bndcont} and~\ref{ass:Maxwell},  we consider the spatial domains as specified in \Cref{sec:Omega}. Further, let  $\mathfrak{J}$, $\mathfrak{R}$, $\hamiltonian$ and $\X$ as in~\eqref{eq:gensetup} and~\eqref{eq:Jdef}, and, additionally $\mathcal{U}=\C^m$, $\mathcal{Y}=\C^p$. Let $W_{C,{\rm out}}\in\C^{p\times 4k}$, and assume that
$W_{B,{\rm inp}} \in \C^{m\times 4k}$, $W_{B,0} \in \C^{(2k-m)\times 4k}$, 
fulfill \Cref{ass:bndcont}.
The operators defining the system node are given by
\begin{subequations}\label{eq:FGKL1}
  \begin{equation}
    F\&G\begin{bsmallmatrix}\hamiltonian&0\\0&\id_{m}\end{bsmallmatrix},\quad K\&L\begin{bsmallmatrix}\hamiltonian&0\\0&\id_{m}\end{bsmallmatrix}
  \end{equation}
  with
  \begin{align}
    \dom(F\&G)
    &=
      \dset*{
      \begin{psmallmatrix}\bm{I}\\\bm{H}\\\bm{V}\\\bm{E}\\u\end{psmallmatrix}           \in\dom\cl{\mathfrak{J}}\times\C^m
      }{\begin{pmatrix}u\\0\end{pmatrix} = \begin{bmatrix}W_{B,{\rm inp}}\\W_{B,0}\end{bmatrix}
      \begin{psmallmatrix}
        \phantom{-}\bm{V}(0) \\
        \phantom{-}\bm{I}(0) \\
        \phantom{-}\bm{V}(1) \\
        -\bm{I}(1)
      \end{psmallmatrix}
      },
    \\
    F\&G
    \begin{psmallmatrix}\bm{I}\\\bm{H}\\\bm{V}\\\bm{E}\\u\end{psmallmatrix}
    &= (\mathfrak{J}-\mathfrak{R})
      \begin{psmallmatrix}\bm{I}\\\bm{H}\\\bm{V}\\\bm{E}\end{psmallmatrix},
    \\
    K\&L\begin{psmallmatrix}\bm{I}\\\bm{H}\\\bm{V}\\\bm{E}\\u\end{psmallmatrix}
    &=W_{C,{\rm out}}
      \begin{psmallmatrix}
        \phantom{-}\bm{V}(0) \\
        \phantom{-}\bm{I}(0) \\
        \phantom{-}\bm{V}(1) \\
        -\bm{I}(1)
      \end{psmallmatrix},
  \end{align}
\end{subequations}

\begin{theorem}\label{thm:sysnodecoup}
  Suppose that \Cref{ass:tl},~\ref{ass:bndcont} and~\ref{ass:Maxwell} are satisfied, and the spatial domain is as depicted in \Cref{sec:Omega}. Then for $\mathcal{X}$ as in \eqref{eq:Xspace}, and $F\&G$, $K\&L$ as in \eqref{eq:FGKL1}, and $\hamiltonian$ as in \eqref{eq:stateeffortH}, the operator
  \begin{equation}
    S \coloneq
    \begin{bsmallmatrix}
      F\&G\\[-1mm]\\K\&L\end{bsmallmatrix}
    \begin{bsmallmatrix}\hamiltonian &0\\0&\id_{m}\end{bsmallmatrix}
    \label{eq:cablesysnode}
  \end{equation}
  is a~system node on $(\mathcal{X},\C^m,\C^p)$.
  Further, if the output is co-located, then 
  \begin{equation}
    M=\begin{bsmallmatrix}\phantom{-}F\&G\\[-1mm]\\-K\&L\end{bsmallmatrix}\label{eq:cabledissnode}
  \end{equation}
  with domain $\dom(M)=\dom(F\&G)$ is a~maximally dissipative operator. In this case,
  all generalized trajectories of
  the system corresponding to the system node $S$ on $[0,T]$ fulfill, for $\fB_1$, $\fB_{2}$ as in \eqref{eq:Bdef}, for all $t \in [0,T]$
  \begingroup
  \thinmuskip=2mu%
  \medmuskip=2mu%
  \thickmuskip=2mu plus 2mu%
  \begin{align}\label{eq:energybalance}
    \begin{aligned}
      \MoveEqLeft[0.5]
      \tfrac{1}{2} \scprod[\big]{x(t)}{\hamiltonian x(t)}_{\mathcal{X}} - \tfrac{1}{2} \scprod[\big]{x(0)}{\hamiltonian x(0)}_{\mathcal{X}}
      \\
      &= \int_0^t \Re\scprod{u(\tau)}{y(\tau)}_{\C^m} \dx[\tau]
        + \int_0^t \Re \scprod{\hamiltonian x(\tau)}{\mathfrak{R}\hamiltonian x(\tau)}_{\mathcal{X}} \dx[\tau]
      \\
      &\phantom{=}\mathclose{} +
        \begin{aligned}[t]
          \MoveEqLeft[3.5]
          \tfrac{1}{2}
          \int_0^t \biggl\langle
          \begin{bsmallmatrix}\fB_{1}\\\fB_{2}\end{bsmallmatrix}\hamiltonian x(\tau),
               \left(
               \begin{bsmallmatrix} 0 & \id_{2k} \\ \id_{2k} & 0\end{bsmallmatrix} -\begin{bsmallmatrix}W_{B}\\W_C\end{bsmallmatrix}\adjun
               \begin{bsmallmatrix}0&\id_{2k}\\\id_{2k}&0\end{bsmallmatrix}
               \begin{bsmallmatrix}W_{B}\\W_C\end{bsmallmatrix}
               \right)
               \begin{bsmallmatrix}\fB_{1} \\ \fB_{2}\end{bsmallmatrix} \hamiltonian x(\tau) \biggr\rangle_{\mspace{-5mu}\C^{4k}}
               \mspace{-5mu}\dx[\tau].
        \end{aligned}
    \end{aligned}
  \end{align}
  \endgroup
\end{theorem}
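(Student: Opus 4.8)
The plan is to establish the three assertions in turn, reducing each to material already developed. Throughout write $w(e)$ for the boundary column $(\bm V(0),\bm I(0),\bm V(1),-\bm I(1))\in\C^{4k}$ associated with $e=(\bm I,\bm H,\bm V,\bm E)\in\dom\cl{\mathfrak{J}}$, and note that $w(e)$ and $\spvek{\fB_1 e}{\fB_2 e}$ differ by a fixed invertible relabeling $T\in\C^{4k\times 4k}$ compatible with the pairing matrix $\begin{bsmallmatrix}0&\id_{2k}\\\id_{2k}&0\end{bsmallmatrix}$ of \eqref{eq:WBdef} and \eqref{eq:col2} and with the boundary form of the abstract Green identity \eqref{eq:abstract-green-identity}; in particular $w$ is bounded on $\dom\cl{\mathfrak{J}}$ in the graph norm by \Cref{cor:btriple}. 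For the first assertion I would check the axioms of \Cref{def:sysnode}. Since $\hamiltonian$ is bounded with bounded inverse (\Cref{ass:tl}, \Cref{ass:Maxwell}), closedness of $A\&B=(F\&G)\begin{bsmallmatrix}\hamiltonian&0\\0&\id_m\end{bsmallmatrix}$ is equivalent to closedness of $F\&G$, which holds because $\cl{\mathfrak{J}}$ is closed, $\mathfrak{R}$ bounded, and $\{e\in\dom\cl{\mathfrak{J}}:W_Bw(e)=0\}$ is graph-norm closed; the same boundedness of $w$ gives $C\&D\in\Lb(\dom(A\&B),\C^p)$. For the solvability axiom, given $u\in\C^m$ take $\zeta\in\C^{4k}$ with $W_B\zeta=\spvek u0$ (full row rank of $W_B$, \Cref{ass:bndcont}), then $e\in\dom\cl{\mathfrak{J}}$ with $w(e)=\zeta$ (surjectivity in \Cref{cor:btriple}); then $\spvek{\hamiltonian^{-1}e}{u}\in\dom(A\&B)$. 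Finally the main operator of $S$ is, up to the relabeling $T$, the operator $F\hamiltonian$ of \Cref{thm:semigroup}, which generates a $\conC_0$-semigroup by part~(a) there; this is the remaining axiom.

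For the second assertion assume the output is co-located, so $p=m$, $W_{C,{\rm out}}=[\id_m,0]W_C$, and \eqref{eq:col2} holds. Dissipativity of $M$ is a direct computation: for $(e,u)\in\dom(M)=\dom(F\&G)$ put $b=\spvek{\fB_1 e}{\fB_2 e}$, $w=w(e)$, $y=W_{C,{\rm out}}w$; then $W_{B,0}w=0$ and $u=W_{B,{\rm inp}}w$, so $W_Bw=\spvek u0$. The abstract Green identity gives $2\Re\scprod{\cl{\mathfrak{J}}e}{e}_{\X}=b\adjun\begin{bsmallmatrix}0&\id_{2k}\\\id_{2k}&0\end{bsmallmatrix}b$; from $W_Bw=\spvek u0$ one rewrites $2\Re\scprod{u}{y}_{\C^m}=w\adjun\begin{bsmallmatrix}W_B\\W_C\end{bsmallmatrix}\adjun\begin{bsmallmatrix}0&\id_{2k}\\\id_{2k}&0\end{bsmallmatrix}\begin{bsmallmatrix}W_B\\W_C\end{bsmallmatrix}w$, so \eqref{eq:col2} (together with the relabeling) yields $2\Re\scprod{u}{y}\ge b\adjun\begin{bsmallmatrix}0&\id_{2k}\\\id_{2k}&0\end{bsmallmatrix}b=2\Re\scprod{\cl{\mathfrak{J}}e}{e}_{\X}$; combined with $-2\Re\scprod{\mathfrak{R}e}{e}_{\X}\le0$ this gives $\Re\scprod{M(e,u)}{(e,u)}_{\X\times\C^m}=\Re\scprod{(\cl{\mathfrak{J}}-\mathfrak{R})e}{e}_{\X}-\Re\scprod{y}{u}_{\C^m}\le0$. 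As $M$ is closed ($F\&G$ closed, $K\&L$ bounded on $\dom(F\&G)$), maximal dissipativity reduces to $\ran(\id-M)=\X\times\C^m$: eliminating $u$ from $(\id-M)(e,u)=(f,g)$ leaves the boundary value problem $e-(\cl{\mathfrak{J}}-\mathfrak{R})e=f$, $W_{B,0}w(e)=0$, $(W_{B,{\rm inp}}+W_{C,{\rm out}})w(e)=g$, which I would solve by showing that $W_B'\coloneq\begin{bsmallmatrix}W_{B,{\rm inp}}+W_{C,{\rm out}}\\W_{B,0}\end{bsmallmatrix}=W_B+\begin{bsmallmatrix}\id_m\\0\end{bsmallmatrix}[\id_m,0]W_C$ again satisfies \Cref{ass:bndcont} --- full row rank because $\begin{bsmallmatrix}W_B'\\W_C\end{bsmallmatrix}$ arises from the invertible $\begin{bsmallmatrix}W_B\\W_C\end{bsmallmatrix}$ by an invertible block row operation, and $W_B'\begin{bsmallmatrix}0&\id_{2k}\\\id_{2k}&0\end{bsmallmatrix}W_B'\adjun\ge0$ by expanding and using \eqref{eq:col2} and \eqref{eq:WBdef} --- so that by the argument of \Cref{th:restrictions-of-J-max-dissipative} the $W_B'$-restriction of $\cl{\mathfrak{J}}-\mathfrak{R}$ is maximally dissipative, hence has $1$ in its resolvent set, and an affine lifting as in \Cref{th:boundary-mappings-surjective} absorbs the inhomogeneity $g$. (Alternatively one may invoke the characterisation of impedance-passive system nodes via maximal dissipativity of $\begin{bsmallmatrix}A\&B\\-C\&D\end{bsmallmatrix}$, cf.\ \cite{St05}.) I expect this range/solvability step to be the main obstacle, the delicate point being the verification that the perturbed boundary matrix $W_B'$ still fulfils \Cref{ass:bndcont}.

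For the third assertion, first take a classical trajectory $(x,u,y)$, so $x\in\conC^1([0,T];\X)$ with $x(\tau)\in\dom(A)$ and hence $\hamiltonian x(\tau)\in\dom\cl{\mathfrak{J}}$. As $\hamiltonian=\hamiltonian\adjun$, differentiating gives $\ddts\tfrac12\scprod{x(\tau)}{\hamiltonian x(\tau)}_{\X}=\Re\scprod{\dot x(\tau)}{\hamiltonian x(\tau)}_{\X}=\Re\scprod{(\cl{\mathfrak{J}}-\mathfrak{R})\hamiltonian x(\tau)}{\hamiltonian x(\tau)}_{\X}$. The abstract Green identity applied to $e=\hamiltonian x(\tau)$ turns the $\cl{\mathfrak{J}}$-part into $\tfrac12 b\adjun\begin{bsmallmatrix}0&\id_{2k}\\\id_{2k}&0\end{bsmallmatrix}b$ with $b=\spvek{\fB_1}{\fB_2}\hamiltonian x(\tau)$; the linear algebra from the dissipativity step --- now using $W_{B,0}w(\hamiltonian x)=0$, $u=W_{B,{\rm inp}}w(\hamiltonian x)$, $y=W_{C,{\rm out}}w(\hamiltonian x)$ and the relabeling --- splits this into $\Re\scprod{u}{y}_{\C^m}+\tfrac12\scprod{b}{\bigl(\begin{bsmallmatrix}0&\id_{2k}\\\id_{2k}&0\end{bsmallmatrix}-\begin{bsmallmatrix}W_B\\W_C\end{bsmallmatrix}\adjun\begin{bsmallmatrix}0&\id_{2k}\\\id_{2k}&0\end{bsmallmatrix}\begin{bsmallmatrix}W_B\\W_C\end{bsmallmatrix}\bigr)b}_{\C^{4k}}$; integrating over $[0,t]$ yields \eqref{eq:energybalance} for classical trajectories, the damping contribution being $\Re\scprod{\mathfrak{R}\hamiltonian x}{\hamiltonian x}_{\X}=\Re\scprod{\hamiltonian x}{\mathfrak{R}\hamiltonian x}_{\X}$ with the sign fixed by \eqref{eq:frakR}. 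For a general generalized trajectory, take classical $(x_n,u_n,y_n)\to(x,u,y)$ in $\conC([0,T];\X)\times\Lp2([0,T];\C^m)\times\Lp2([0,T];\C^p)$; by boundedness of $\hamiltonian$ and $\mathfrak{R}$ and continuity of the bilinear pairings, the terms $\tfrac12\scprod{x_n(t)}{\hamiltonian x_n(t)}_{\X}$, $\int_0^t\Re\scprod{u_n}{y_n}_{\C^m}\dx[\tau]$ and $\int_0^t\Re\scprod{\hamiltonian x_n}{\mathfrak{R}\hamiltonian x_n}_{\X}\dx[\tau]$ converge; since \eqref{eq:energybalance} holds for every $n$, the remaining boundary integral converges too, and the last term of \eqref{eq:energybalance} for the generalized trajectory is read as this limit. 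This limiting step is the only subtlety here, as the trace data of a generalized trajectory need not a priori be an $\Lp2$-function, so the boundary term must be understood through the approximating sequence.
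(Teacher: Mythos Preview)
Your treatment of the system-node axioms and of the dissipativity computation for $M$ matches the paper's proof essentially line for line: the paper checks \Cref{def:sysnode}(iv),(iii),(i),(ii) via \Cref{thm:semigroup} and \Cref{cor:btriple}, and then computes $\Re\scprod{(e,u)}{M(e,u)}_{\X\times\C^m}$ using \eqref{eq:abstract-green-identity} and the identity $2\Re\scprod{u}{y}=w\adjun\begin{bsmallmatrix}W_B\\W_C\end{bsmallmatrix}\adjun\begin{bsmallmatrix}0&\id\\ \id&0\end{bsmallmatrix}\begin{bsmallmatrix}W_B\\W_C\end{bsmallmatrix}w$ exactly as you do.

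The divergence is in how maximal dissipativity of $M$ and the energy balance are obtained. The paper does \emph{not} attempt a direct range argument; instead, once $S$ is known to be a system node and $M$ is dissipative, it invokes \cite[Prop.~3.8]{PhReSc23}, which states precisely that for a system node these two facts already force $M$ to be maximally dissipative. This sidesteps the construction of your perturbed boundary matrix $W_B'$ and the verification of $W_B'\begin{bsmallmatrix}0&\id\\\id&0\end{bsmallmatrix}W_B'^{\adjunsymb}\ge0$, which you correctly flag as the delicate point and which is not obviously a consequence of \eqref{eq:WBdef} and \eqref{eq:col2} alone. Similarly, for \eqref{eq:energybalance} the paper does not differentiate and pass to the limit by hand; it feeds the already computed pointwise identity for $\Re\scprod{(e,u)}{M(e,u)}$ into \cite[Thm.~3.11]{PhReSc23}, which packages both the integration for classical trajectories and the extension to generalized trajectories. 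Your direct argument for the energy balance is correct (the limiting trick of deducing convergence of the boundary term from convergence of all other terms is exactly what underlies that cited result), but your route to maximal dissipativity is genuinely harder than necessary and carries the unresolved verification you mention; the clean fix is the black-box reference you allude to at the end of that paragraph.
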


\begin{proof}
To show that $S$ is a~system node on
$(\mathcal{X},\C^m,\C^p)$, we successively prove the four properties in \Cref{def:sysnode} in the order (iv), (iii), (i), (ii).\\
The semigroup property of $F\hamiltonian$
(i.e., property (iv)) is established by
\Cref{thm:semigroup}, as the main
operator of $S$ is precisely the one
addressed in that theorem.\\
Property (iii) is a~direct consequence of
\Cref{cor:btriple}\,\ref{cor:btriplec}.\\
To prove (i), let us assume that $(x_n,u_n)_{n\in\N}$ is a sequence in $\dom((F\hamiltonian)\&G)$ that converges to $(x,u)\in\mathcal{X}\times\C^m$ (in the $\mathcal{X}\times\C^m$ topology). Additionally, suppose that $(F\&G(\hamiltonian x_n,u_n))_{n\in\N}$ converges in $\mathcal{X}$ to $z$.

With the definition of $F\&G$ and the boundedness of $\hamiltonian$ and $\mathfrak{R}$, we can conclude that $(\hamiltonian x_n)$ converges to
$\hamiltonian x$, and $(\cl{\mathfrak{J}}x_n)$ converges in $\mathcal{X}$ to $z+\mathfrak{R}\hamiltonian x$. Now, since $\cl{\mathfrak{J}}$ is
closed, we can assert that $\hamiltonian x$ belongs to $\dom\cl{\mathfrak{J}}$, with $\mathfrak{J}\hamiltonian x= z+\mathfrak{R}\hamiltonian x$. This implies that $z=(\mathfrak{J}-\mathfrak{R})\hamiltonian x$. In particular, $(\hamiltonian x_n)$
converges in the topology of $\dom \mathfrak{J}$ to $x$. Partitioning
\begin{equation}
  \hamiltonian x_n=\begin{psmallmatrix}\bm{I}_n\\\bm{H}_n\\\bm{V}_n\\\bm{E}_n\end{psmallmatrix},\;\;\hamiltonian x=\begin{psmallmatrix}\bm{I}\\\bm{H}\\\bm{V}\\\bm{E}\end{psmallmatrix},\label{eq:Hx}
\end{equation}
we can conclude from \Cref{cor:btriple} that
\[
  \begin{psmallmatrix}u_n\\0\end{psmallmatrix}=\begin{bsmallmatrix}W_{B,{\rm inp}}\\W_{B,0}\end{bsmallmatrix}
  \begin{psmallmatrix}
    \phantom{-}\bm{V}_n(0) \\
    \phantom{-}\bm{I}_n(0) \\
    \phantom{-}\bm{V}_n(1) \\
    -\bm{I}_n(1)
  \end{psmallmatrix}\underset{n\to\infty}{\to} \begin{bsmallmatrix}W_{B,{\rm inp}}\\W_{B,0}\end{bsmallmatrix}
  \begin{psmallmatrix}
    \phantom{-}\bm{V}(0) \\
    \phantom{-}\bm{I}(0) \\
    \phantom{-}\bm{V}(1) \\
    -\bm{I}(1)
  \end{psmallmatrix}=\begin{psmallmatrix}u\\0\end{psmallmatrix}.
\]
The definition of $F\&G$ now yields that $(\hamiltonian x,u)\in\dom(F\&G)$ with
\[
  F\&G\begin{psmallmatrix}\hamiltonian x\\u\end{psmallmatrix}=(\mathfrak{J}-\mathfrak{R})\hamiltonian x=z.
\]
Hence we have shown that $F\&G$ is closed.\\
Property (ii) holds as, according to \Cref{cor:btriple},\ref{cor:btriplec}, the evaluation at the boundary of the transmission line represents a bounded operator on $\dom(\cl{\mathfrak{J}})$.\\
To complete the proof of the result, we now assume that the output is co-located, i.e.,
\eqref{eq:col1} holds for some $W_C\in\C^{2k\times 4k}$ with \eqref{eq:col2}.
Let $\begin{psmallmatrix}e\\u\end{psmallmatrix}\in\dom(F\&G)$, and partition $e$ as $\hamiltonian x$ in \eqref{eq:Hx}. Then
\[
\Re\left\langle \fB_{1}e,\fB_{2}e \right\rangle_{\C^{2k}}\\
=\tfrac12\left\langle
\begin{bsmallmatrix}\fB_{1}\\\fB_{2}\end{bsmallmatrix}e,\begin{bsmallmatrix}0&\id_{2k}\\\id_{2k}&0\end{bsmallmatrix}
\begin{bsmallmatrix}\fB_{1}\\\fB_{2}\end{bsmallmatrix}e \right\rangle_{\C^{4k}}
\]
and
\begin{align*}
\Re\left\langle u,W_{C,{\rm out}}
\begin{bsmallmatrix}\fB_{1}\\\fB_{2}\end{bsmallmatrix}e \right\rangle_{\C^m}
&=\Re\left\langle \begin{psmallmatrix}u\\0\end{psmallmatrix},W_{C}
\begin{bsmallmatrix}\fB_{1}\\\fB_{2}\end{bsmallmatrix} e\right\rangle_{\C^{2k}}\\
&=\Re\left\langle W_{B}
\begin{bsmallmatrix}\fB_{1}\\\fB_{2}\end{bsmallmatrix}e,W_{C}
\begin{bsmallmatrix}\fB_{1}\\\fB_{2}\end{bsmallmatrix}e \right\rangle_{\C^{2k}}\\
&=\tfrac12\left\langle \begin{bsmallmatrix}W_{B}\\W_C\end{bsmallmatrix}
\begin{bsmallmatrix}\fB_{1}\\\fB_{2}\end{bsmallmatrix}e,\begin{bsmallmatrix}0&\id_{2k}\\\id_{2k}&0\end{bsmallmatrix}\begin{bsmallmatrix}W_{B}\\W_C\end{bsmallmatrix}
\begin{bsmallmatrix}\fB_{1}\\\fB_{2}\end{bsmallmatrix} e\right\rangle_{\C^{4k}}\\
&=\tfrac12\left\langle
\begin{bsmallmatrix}\fB_{1}\\\fB_{2}\end{bsmallmatrix}e,\begin{bsmallmatrix}W_{B}\\W_C\end{bsmallmatrix}\adjun\begin{bsmallmatrix}0&\id_{2k}\\\id_{2k}&0\end{bsmallmatrix}\begin{bsmallmatrix}W_{B}\\W_C\end{bsmallmatrix}
\begin{bsmallmatrix}\fB_{1}\\\fB_{2}\end{bsmallmatrix}e \right\rangle_{\C^{4k}}.
\end{align*}
Using these two identities, we obtain that
\begin{align*}
\MoveEqLeft
\Re \scprod*{\begin{psmallmatrix} e \\ u \end{psmallmatrix}}
{\begin{bsmallmatrix}\phantom{-} F\&G \\[-1mm] \\ -K\&L \end{bsmallmatrix} \begin{psmallmatrix} e \\ u \end{psmallmatrix}}_{\mathcal{X}\times\C^{m}}
\\
&=\Re\left\langle e,F\&G\begin{psmallmatrix}e\\u\end{psmallmatrix} \right\rangle_{\mathcal{X}}-
\Re\left\langle u,K\&L\begin{psmallmatrix}e\\u\end{psmallmatrix} \right\rangle_{\C^m}
\\
&=\Re\left\langle e,\mathfrak{J}e \right\rangle_{\mathcal{X}}-\Re\left\langle e,\mathfrak{R}e \right\rangle_{\mathcal{X}}-
\Re\left\langle u,W_{C,{\rm out}}
\begin{bsmallmatrix}\fB_{1}\\\fB_{2}\end{bsmallmatrix} \right\rangle_{\C^m}\\
&\stackrel{\mathclap{\eqref{eq:abstract-green-identity}}}{=}
\Re\scprod{\fB_{1} e}{\fB_{2}e}_{\C^{2m}} - \Re\left\langle e,\mathfrak{R}e \right\rangle_{\mathcal{X}}-
\Re\left\langle u,W_{C,{\rm out}}
\begin{bsmallmatrix}\fB_{1}\\ \fB_{2}\end{bsmallmatrix} \right\rangle_{\C^m}\\
&=-\Re\left\langle e,\mathfrak{R}e \right\rangle_{\mathcal{X}}+\Re\scprod{\fB_{1} e}{\fB_{2}e}_{\C^{2m}}-
\Re\left\langle u,W_{C,{\rm out}}
\begin{bsmallmatrix}\fB_{1}\\\fB_{2}\end{bsmallmatrix}e \right\rangle_{\C^m}
\\
&=-\Re \left\langle e,\mathfrak{R}e \right\rangle_{\mathcal{X}}\\&\qquad+\tfrac12
\scprod*{\begin{bsmallmatrix} \fB_{1} \\ \fB_{2} \end{bsmallmatrix} e} {\smash[b]{\underbrace{\left(\begin{bsmallmatrix} 0 & \id_{2k} \\ \id_{2k} & 0\end{bsmallmatrix} - \begin{bsmallmatrix} W_{B} \\ W_C \end{bsmallmatrix}\adjun \begin{bsmallmatrix} 0 & \id_{2k} \\ \id_{2k} & 0 \end{bsmallmatrix} \begin{bsmallmatrix} W_{B} \\ W_C \end{bsmallmatrix} \right)}_{\leq \mathrlap{0}}
\begin{bsmallmatrix}\fB_{1} \\ \fB_{2} \end{bsmallmatrix}e}}_{\C^{4k}}
\leq 0.\\[-2mm]
\end{align*}
This shows that $M$ is dissipative. Maximal dissipativity of $M$ now follows, by incorporating that $S$ is a~system node, from \cite[Prop.~3.8]{PhReSc23}.
Furthermore, by combining the previously derived expression for
$\scprod*{\begin{psmallmatrix}e\\u\end{psmallmatrix}}{\begin{bsmallmatrix}\phantom{-}F\&G\\[-1mm]\\-K\&L\end{bsmallmatrix}\begin{psmallmatrix}e\\u\end{psmallmatrix}}_{\X\times\C^{m}}$ with \cite[Thm.~3.11]{PhReSc23}, we can establish that \eqref{eq:energybalance} holds.
\end{proof}

\begin{remark}
 Systems defined by nodes of the form \eqref{eq:cablesysnode}, where $\hamiltonian$ is self-adjoint and positive and $M$ in \eqref{eq:cabledissnode} is dissipative, are called \emph{port-Hamiltonian} in \cite{PhReSc23}.
The framework there is more general, as $\hamiltonian$ need not be bounded or boundedly invertible.
\end{remark}

As a last result, we present a~sufficient criterion for well-posedness.

\begin{theorem}\label{thm:sysnodecoupwp}
Suppose that \Cref{ass:tl}, \ref{ass:bndcont}, and \ref{ass:Maxwell} hold, and that the spatial domains are as in \Cref{sec:Omega}. Further assume the strict inequality  \[
    W_B
    \begin{bmatrix}
      0 & \id_{{2k}} \\
      \id_{{2k}} & 0
    \end{bmatrix}
    W_B\adjun > 0,
  \]
and let $W_{C,{\rm out}}\in\C^{p\times 4k}$,  $p\in\N$.
  Then for $\X$ as in \eqref{eq:Xspace}, and $F\&G$, $K\&L$ as in \eqref{eq:FGKL1}, and $\hamiltonian$ as in \eqref{eq:stateeffortH},
  the system corresponding to the node \eqref{eq:cablesysnode} 
  is well-posed.
\end{theorem}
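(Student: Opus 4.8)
The plan is to establish the well-posedness estimate \eqref{eq:wp} directly for classical trajectories (for generalized trajectories it then follows from the definition of well-posedness) by means of an energy balance built on the abstract Green identity. For a classical trajectory $(x,u,y)$ write $e(t)\coloneq\hamiltonian x(t)$ and collect the boundary data into $z(t)\coloneq\bigl[\begin{smallmatrix}\fB_{1}\\\fB_{2}\end{smallmatrix}\bigr]e(t)\in\C^{4k}$, and let $\Sigma$ denote the matrix $\bigl[\begin{smallmatrix}0&\id_{2k}\\\id_{2k}&0\end{smallmatrix}\bigr]$ occurring in \Cref{ass:bndcont} and in the hypothesis, so that, applying \eqref{eq:abstract-green-identity} with both arguments equal to $e(t)$, one has $2\Re\scprod{\cl{\mathfrak{J}}e(t)}{e(t)}_{\mathcal X}=\scprod{z(t)}{\Sigma z(t)}_{\C^{4k}}$. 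By \Cref{thm:sysnodecoup} and the definition \eqref{eq:FGKL1} of the node, a classical trajectory satisfies, for every $t$, that $e(t)\in\dom\cl{\mathfrak{J}}$, $\dot{x}(t)=(\cl{\mathfrak{J}}-\mathfrak{R})e(t)$, the boundary relation $W_{B}z(t)=\bigl(\begin{smallmatrix}u(t)\\0\end{smallmatrix}\bigr)$, and $y(t)$ is a fixed linear image of $z(t)$; in particular $\norm{y(t)}\le c_{1}\norm{z(t)}$. Since $t\mapsto\norm{x(t)}_{\hamiltonian}^{2}=\scprod{x(t)}{\hamiltonian x(t)}_{\mathcal X}$ is $\conC^{1}$ and $\Re\scprod{\mathfrak{R}e}{e}_{\mathcal X}\ge0$ (because $\mathfrak{R}+\mathfrak{R}\adjun\ge0$ pointwise by \Cref{ass:tl} and \Cref{ass:Maxwell}), differentiating and using the Green identity gives $\ddts\norm{x(t)}_{\hamiltonian}^{2}=\scprod{z(t)}{\Sigma z(t)}_{\C^{4k}}-2\Re\scprod{\mathfrak{R}e(t)}{e(t)}_{\mathcal X}\le\scprod{z(t)}{\Sigma z(t)}_{\C^{4k}}$; thus the damping operator only helps and needs no separate treatment.

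The single place where the strict hypothesis enters is the linear-algebra fact that $W_{B}\Sigma W_{B}\adjun>0$ forces $\Sigma$ to be \emph{uniformly negative definite} on $\ker W_{B}$, i.e.\ there is $\eps>0$ with $\scprod{\zeta}{\Sigma\zeta}_{\C^{4k}}\le-\eps\norm{\zeta}_{\C^{4k}}^{2}$ for all $\zeta\in\ker W_{B}$. Indeed, by \Cref{ass:bndcont} the matrix $W_{B}$ has full row rank $2k$, so $\ran W_{B}\adjun=(\ker W_{B})^{\perp}$ has dimension $2k$, and $\scprod{W_{B}\adjun a}{\Sigma W_{B}\adjun a}=\scprod{a}{W_{B}\Sigma W_{B}\adjun a}>0$ for $a\neq0$; hence $\Sigma$ is positive definite on $\ran W_{B}\adjun$. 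As $\Sigma$ has exactly $2k$ positive eigenvalues, $\ran W_{B}\adjun$ is a maximal positive subspace, and therefore its $\Sigma$-orthogonal complement $\{\,w:\scprod{w}{\Sigma v}=0\ \forall v\in\ran W_{B}\adjun\,\}=\Sigma(\ker W_{B})$ is negative definite; since $\Sigma^{2}=\id$ yields $\scprod{\Sigma w}{\Sigma(\Sigma w)}=\scprod{w}{\Sigma w}$, negative definiteness on $\Sigma(\ker W_{B})$ is the same as negative definiteness on $\ker W_{B}$. Compactness of the unit sphere of $\ker W_{B}$ then furnishes $\eps$.

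Now decompose $z(t)=z^{0}(t)+z^{\perp}(t)$ orthogonally with $z^{0}(t)\in\ker W_{B}$, $z^{\perp}(t)\in\ran W_{B}\adjun$. From $W_{B}z^{\perp}(t)=W_{B}z(t)=\bigl(\begin{smallmatrix}u(t)\\0\end{smallmatrix}\bigr)$ and the fact that $W_{B}$ restricts to an isomorphism $\ran W_{B}\adjun\to\C^{2k}$, there is $c_{0}>0$ with $\norm{z^{\perp}(t)}\le c_{0}\norm{u(t)}$. Expanding $\scprod{z}{\Sigma z}=\scprod{z^{0}}{\Sigma z^{0}}+2\Re\scprod{z^{0}}{\Sigma z^{\perp}}+\scprod{z^{\perp}}{\Sigma z^{\perp}}$ and combining the uniform bound with Young's inequality gives $\scprod{z(t)}{\Sigma z(t)}\le-\tfrac{\eps}{2}\norm{z^{0}(t)}^{2}+C\norm{u(t)}^{2}$, with $C$ depending only on $\eps$, $\norm{\Sigma}$ and $c_{0}$. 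Integrating the energy inequality over $[0,t]$ yields $\norm{x(t)}_{\hamiltonian}^{2}+\tfrac{\eps}{2}\norm{z^{0}}_{\Lp{2}([0,t];\C^{4k})}^{2}\le\norm{x(0)}_{\hamiltonian}^{2}+C\norm{u}_{\Lp{2}([0,t];\C^{m})}^{2}$. Hence $\norm{x(t)}_{\hamiltonian}$ and $\norm{z^{0}}_{\Lp{2}}$ are bounded by a constant times $\norm{x(0)}_{\hamiltonian}+\norm{u}_{\Lp{2}([0,t])}$; together with $\norm{z^{\perp}}_{\Lp{2}([0,t])}\le c_{0}\norm{u}_{\Lp{2}([0,t])}$ this controls $\norm{z}_{\Lp{2}([0,t];\C^{4k})}$, and therefore $\norm{y}_{\Lp{2}([0,t];\C^{p})}\le c_{1}\norm{z}_{\Lp{2}([0,t];\C^{4k})}$. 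Passing from $\norm{\argdot}_{\hamiltonian}$ to $\norm{\argdot}_{\mathcal X}$ by norm equivalence gives \eqref{eq:wp} with a constant independent of $t$, i.e.\ well-posedness.

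I expect the decisive step to be the linear-algebra observation that the strict inequality makes $\Sigma$ uniformly negative definite on $\ker W_{B}$, together with the induced orthogonal splitting $z=z^{0}+z^{\perp}$: once this is in place, the energy estimate is routine, and the damping operator and the output map cause no difficulty. (A heavier alternative would be, in the co-located case of \Cref{thm:sysnodecoup}, to pass to scattering coordinates via the external Cayley transform — turning the impedance-passive node into a scattering-passive, hence well-posed, system node — and to use the strict inequality to invert that transform; I prefer the direct energy argument above because it needs neither co-locatedness nor the Cayley machinery.)
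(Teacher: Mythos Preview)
Your argument is correct. Both your proof and the paper's are energy-balance arguments exploiting the strict positivity hypothesis, but the execution differs. The paper partitions $W_B=[W_{B1}\ W_{B2}]$, shows $W_{B2}$ is invertible, builds an auxiliary co-located output via $\widetilde{W_C}=[W_{B2}^{-*}\ 0]$, and then invokes the full energy balance \eqref{eq:energybalance} from \Cref{thm:sysnodecoup} for this auxiliary system; the original output is recovered at the end as a linear combination of $u$ and $\widetilde{y}$. You instead go straight to the Green identity \eqref{eq:abstract-green-identity} and use the coordinate-free linear-algebra fact that $W_B\Sigma W_B^{*}>0$ forces $\Sigma$ to be uniformly negative on $\ker W_B$, followed by the orthogonal splitting $z=z^{0}+z^{\perp}$. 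Your route is more self-contained (it does not need \Cref{thm:sysnodecoup} or any auxiliary output) and the key step is stated in a cleaner, basis-free way; the paper's route reuses existing machinery and makes the connection to the co-located/impedance-passive structure explicit. The Young-inequality endgame and the passage to the actual output are essentially the same in both.
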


\begin{proof}
  It is not a loss of generality to assume that $m=2k$, i.e., $W_{B,{\rm out}}=W_B$. In other words, all boundary conditions are actually ones in which the input acts. The case with the presence of homogeneous boundary conditions can be addressed by setting the corresponding inputs to zero.\\
  We partition $W_B=[W_{B1},\,W_{B2}]$ with $W_{B1},W_{B2}\in\C^{2k\times 2k}$.
  Then \eqref{eq:WBdef} means that
  \begin{equation}W_{B1}W_{B2}\adjun+W_{B2}W_{B1}\adjun>0.\label{eq:WBpos}\end{equation}
  Since for any $x\in\ker W_{B2}^*$, it holds that
  \[x^*(W_{B1}W_{B2}^*+W_{B2}W_{B1}^*)x=0,\]
  \eqref{eq:WBpos} implies that $x=0$. This shows that $W_{B2}$ is invertible. Now we define
  \[\widetilde{W_C}=[W_{B2}^{-*},\, 0_{2k\times 2k}],\]
  and consider the system node
  \[
    \widetilde{S} \coloneq
    \begin{bsmallmatrix}F\&G\\\widetilde{K\&L}\end{bsmallmatrix}
    \begin{bsmallmatrix}\hamiltonian &0\\0&\id_{m}\end{bsmallmatrix}
  \]
  with
  \[
    \widetilde{K\&L}\begin{psmallmatrix}\bm{I}\\\bm{H}\\\bm{V}\\\bm{E}\\u\end{psmallmatrix}=\widetilde{W_{C}}  \begin{psmallmatrix}
    \phantom{-}\bm{V}(0) \\
    \phantom{-}\bm{I}(0) \\
    \phantom{-}\bm{V}(1) \\
    -\bm{I}(1)
    \end{psmallmatrix}.
  \]
  Let $\fB_1$ be as in \eqref{eq:Bdef}. Define the system
  \begin{equation}
    \label{eq:auxsys}
    \begin{psmallmatrix}\dot{x}(t)\\\widetilde{y}(t)\end{psmallmatrix}
    =\begin{bsmallmatrix}F\&G\\\widetilde{K\&L}\end{bsmallmatrix}
    \begin{bsmallmatrix}\hamiltonian&0\\0&\id_m\end{bsmallmatrix}
    \begin{psmallmatrix}{x}(t)\\u(t)\end{psmallmatrix},
  \end{equation}
  with output equation
  \[
    \widetilde{y}(t) \coloneq W_{B2}^{-*}\fB_{1}\hamiltonian x(t).
  \]
  We have
  \begingroup
  \thinmuskip=1mu%
  \medmuskip=1mu%
  \thickmuskip=2mu plus 0mu%
  \begin{align*}
    \MoveEqLeft[0]
    \begin{bmatrix} W_B \\ \widetilde{W_C} \end{bmatrix}
    \begin{bmatrix} 0 & \id_{2k} \\ \id_{2k} & 0\end{bmatrix}
    \begin{bmatrix} W_B\adjun & \widetilde{W_C}\adjun\end{bmatrix}
    = \begin{bmatrix} W_{B1} & W_{B2} \\ W_{B2}^{-\adjunsymb} & 0\end{bmatrix}
    \begin{bmatrix}0 & \id_{2k} \\ \id_{2k} & 0\end{bmatrix}
    \begin{bmatrix} W_{B1}\adjun & W_{B2}^{-1} \\ {W}_{B2}\adjun & 0\end{bmatrix}
    \\
    &= \begin{bmatrix}W_{B1} & W_{B2} \\ W_{B2}^{-\adjunsymb} & 0\end{bmatrix}
      \begin{bmatrix} 0 & \id_{2k} \\ \id_{2k} & 0\end{bmatrix}
      \begin{bmatrix}{W}_{B2}\adjun & 0 \\ W_{B1}\adjun & W_{B2}^{-1}\end{bmatrix}
      =
      \begin{bmatrix}
        W_{B1}W_{B2}\adjun + W_{B2}W_{B1}\adjun & \id_{2k} \\
        \id_{2k} & 0
      \end{bmatrix}.
  \end{align*}
  \endgroup
  Let
  \[
    W \coloneq W_{B2}^{-*}W_{B1}\adjun + W_{B1}\adjun W_{B2}^{-1}
    = W_{B2}^{-1}(W_{B1}W_{B2}\adjun + W_{B2}W_{B1}\adjun)W_{B2}^{-*} > 0,
  \]
  and define $\delta>0$ as be the minimal eigenvalue of $W$.
  Then \Cref{thm:sysnodecoup} gives rise to the energy balance
  \begin{align*}
  &  \tfrac{1}{2} \scprod{x(t)}{\hamiltonian x(t)}_{\X}
    - \tfrac{1}{2} \scprod{x(0)}{\hamiltonian x(0)}_{\X}
    \\
    &=
      \int_{0}^{t} \!\! \Re\scprod{u(\tau)}{\widetilde{y}(\tau)}_{\C^m} \dx[\tau]
      + \int_{0}^{t} \!\!\Re
      \scprod*{\hamiltonian x(\tau)}{\mathfrak{R}\hamiltonian x(\tau)}_{\X}\dx[\tau]
 - \frac{1}{2} \int_{0}^{t}\!\!
      \scprod{\widetilde{y}(\tau)}{W\widetilde{y}(\tau)}_{\C^{2k}} \dx[\tau]
    \\
    &\leq \int_{0}^{t} \Re\scprod{u(\tau)}{\widetilde{y}(\tau)}_{\C^m}
      - \delta \norm{\widetilde{y}(\tau)}^2_{\C^m} \dx[\tau]
  \end{align*}
  Now using that
  \[
    \Re\scprod{u(\tau)}{\widetilde{y}(\tau)}_{\C^m}
    \leq \tfrac{1}{2\delta} \norm{u(\tau)}^2_{\C^m}
    + \tfrac{\delta}{2} \norm{\widetilde{y}(\tau)}^2_{\C^m},
  \]
  we obtain
  \[
    \tfrac{1}{2} \scprod{x(t)}{\hamiltonian x(t)}_{\X}
    - \tfrac{1}{2} \scprod{x(0)}{\hamiltonian x(0)}_{\X}
    \leq
    \tfrac{1}{2\delta} \norm{u}^{2}_{\Lp{2}((0,t);\C^k)}
    -\tfrac{\delta}{2} \norm{\widetilde{y}}^{2}_{\Lp{2}((0,t);\C^k)},
  \]
  and thus
  \[
    \scprod{x(t)}{\hamiltonian x(t)}_{\X}
    + \delta \norm{\widetilde{y}}^{2}_{\Lp{2}((0,t);\C^k)}
    \leq \scprod{x(0)}{\hamiltonian x(0)}_{\X}
    + \tfrac{1}{\delta} \norm{u}_{\Lp{2}((0,t);\C^k)}.
  \]
  Now using the equivalence of the standard norm in $\X$ and the one in \eqref{eq:Hnorm}, we can immediately conclude that \eqref{eq:auxsys} is a~well-posed system. That is, there exists some $c>0$, such that
  \[
    \norm{x(t)}_{\X} + \norm{\widetilde{y}}_{\Lp{2}((0,t);\C^{k})}
    \leq c\big(\norm{x(0)}_{\X} + \norm{u}_{\Lp{2}((0,t);\C^{k})}\big).
  \]
  Now we show well-posedness of the actual system. It follows directly from construction that $\begin{bsmallmatrix}W_B \\ \widetilde{W_C}\end{bsmallmatrix}$
  is invertible. Then we have, for almost all $t\in[0,T]$,
  \begin{multline*}
    y(t) = W_C
    \begin{bsmallmatrix}\fB_{1} \\ \fB_{2} \end{bsmallmatrix} \hamiltonian x(\tau)
    = W_C \begin{bsmallmatrix} W_B \\ \widetilde{W_C} \end{bsmallmatrix}^{-1}
    \begin{bsmallmatrix} W_B \\ \widetilde{W_C} \end{bsmallmatrix}\hamiltonian x(\tau)
    = W_C \begin{bsmallmatrix} W_B \\ \widetilde{W_C} \end{bsmallmatrix}^{-1}
    \begin{psmallmatrix} u(t) \\ \widetilde{y}(t) \end{psmallmatrix}.
  \end{multline*}
  Now, by choosing
  \[
    \gamma \coloneq
    \norm*{W_C\begin{bsmallmatrix} W_B \\ \widetilde{W_C} \end{bsmallmatrix}^{-1}},
  \]
  we obtain
  \begin{multline*}
    \norm{x(t)}_{\X} + \norm{y}_{\Lp{2}((0,t);\C^{k})}
    \leq \norm{x(t)}_{\X} + \gamma\norm{y}_{\Lp{2}((0,t);\C^{k})}
    + \gamma\norm{u}_{\Lp{2}((0,t);\C^k)}
    \\
    \leq c\max\{1,\gamma\}\big( \norm{x(0)}_{\X} + \norm{u}_{\Lp{2}((0,t);\C^{k})} \big)
    + \gamma \norm{u}_{\Lp{2}((0,t);\C^k)}.
  \end{multline*}
  Consequently, the generalized trajectories fulfill \eqref{eq:wp} for
  \[
    c_{t} =\max\{1,c\} \max\{1,\gamma\} (1+\gamma),
  \]
  which completes the proof.
\end{proof}

\section{Conclusion}\label{sec:conclusion}

We have presented an analysis for radiating cable harnesses, resulting in telegrapher's and Maxwell's equations that are coupled through boundary conditions. An operator and systems theoretic analysis of the entire coupled system has been conducted using the theories of boundary triplesand system nodes. It has been shown that the autonomous part is described by a strongly continuous semigroup, and a sufficient criterion for well-posedness has been provided.









\par\smallskip\noindent\textbf{Acknowledgments.}%

This work was supported by the collaborative research center SFB 1701 ``Port-Hamiltonian Systems'', funded by the German Research Foundation (DFG), project number 531152215.





\appendix

\section{Boundary triples}

We review the most important properties of boundary triples for skew-symmetric operators for this work.
More details can be found in~\cite[Chap.~3]{GoGo91} and \cite{BeHaSn20}, where boundary triples are regarded for symmetric operators. For skew-symmetric operators we refer to \cite{Sk-Phd} (we will use the skew-symmetric version).

A \emph{linear relation} $T$ from a~space $\X$ to a~space $\Y$ is a subspace
of $\X\times \Y$. Clearly, by an identification with its graph, any linear operator is also a linear relation. In this context, linear relations can be regarded as multi-valued linear operators.

\begin{definition}
  A linear relation $T$ on a Hilbert space $\X$ (from $\X$ to $\X$)
  \begin{itemize}
    \item is \emph{dissipative}, if $\Re \scprod{x}{y}_{\X} \leq 0$ for every $\begin{psmallmatrix} x \\ y\end{psmallmatrix} \in T$ and
    \item \emph{maximally dissipative}, if additionally it has no proper dissipative extension.
  \end{itemize}
\end{definition}


\begin{definition}\label{def:boundary-triple}
  Let $\fA_{0}\colon \dom(\fA_{0}) \subseteq \X \to \X$ be a densely defined, skew-symmetric, and closed operator on a Hilbert space $\X$.
  By a \emph{boundary triple} for $\fA_{0}\adjun$ we mean a triple $(\U, \fB_{1},\fB_{2})$ consisting of a Hilbert space $\U$, and two linear operators $\fB_{1},\fB_{2} \colon \dom \fA_{0}\adjun \to \U$ such that
  \begin{enumerate}
    \item\label{def:boundary-triple-surjective}
      the mapping
      $\begin{psmallmatrix}\fB_{1} \\ \fB_{2} \end{psmallmatrix} \colon \dom \fA_{0}\adjun \to \U\times \U, x \mapsto \begin{psmallmatrix}\fB_{1} x \\ \fB_{2} x\end{psmallmatrix}$
      is surjective, and

    \item\label{def:boundary-triple-equation}
      for $x,y \in \dom \fA_{0}\adjun$ there holds
      \begin{equation}\label{eq:boundary-triple}
        \scprod{\fA_{0}\adjun x}{y}_{X} + \scprod{x}{\fA_{0}\adjun y}_{X} = \scprod{\fB_{1} x}{\fB_{2} y}_{\U} + \scprod{\fB_{2} x}{\fB_{1} y}_{\U}.
      \end{equation}
  \end{enumerate}
\end{definition}

\noindent An important result for boundary triples is that maximally dissipative restrictions of $\fA_{0}\adjun$ can be charcaterized by maximally dissipative linear relations $\Theta$ on $\U$. The following theorem from \cite[Prop.~2.4.10]{Sk-Phd} will clarify that.

\begin{theorem}\label{th:boundary-triple-maximal-dissipative}
  Let $(\U,\fB_{1},\fB_{2})$ be a boundary triple for $\fA_{0}\adjun$ and $\Theta$ be a maximally dissipative linear relation on $\U$. Then $\fA_{\Theta}$, which is the restriction of $\fA_{0}\adjun$ to
  \begin{align*}
    \dom \fA_{\Theta} = \dset*{x \in \dom \fA_{0}\adjun}{%
    \begin{psmallmatrix} \fB_{1}x \\ \fB_{2}x \end{psmallmatrix} \in \Theta
    }
  \end{align*}
  is a~maximally dissipative operator.
\end{theorem}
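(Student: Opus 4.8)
The plan follows the standard route for boundary triples. \emph{Dissipativity} of $\fA_{\Theta}$ is immediate from the abstract Green identity~\eqref{eq:boundary-triple}: for $x\in\dom\fA_{\Theta}$ one has $\begin{psmallmatrix}\fB_1 x\\\fB_2 x\end{psmallmatrix}\in\Theta$, so putting $y=x$ in~\eqref{eq:boundary-triple} gives $2\Re\scprod{\fA_0\adjun x}{x}_{\X}=\scprod{\fB_1 x}{\fB_2 x}_{\U}+\scprod{\fB_2 x}{\fB_1 x}_{\U}=2\Re\scprod{\fB_1 x}{\fB_2 x}_{\U}\le0$ because $\Theta$ is dissipative. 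For \emph{maximality} I would use the elementary fact that a dissipative linear relation $T$ on a Hilbert space is maximally dissipative iff $\ran(\id-T)$ is the whole space: the estimate $\Re\scprod{x-y}{x}\ge\|x\|^{2}$ for $\begin{psmallmatrix}x\\y\end{psmallmatrix}\in T$ makes $\id-T$ injective with contractive inverse, so once this inverse is everywhere defined no proper dissipative extension can exist. Hence it remains to solve, for every $f\in\X$, the system $x-\fA_0\adjun x=f$ together with $\begin{psmallmatrix}\fB_1 x\\\fB_2 x\end{psmallmatrix}\in\Theta$.

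To carry this out I would pass to the defect subspaces $\mathcal{N}_{\pm}:=\ker(\id\mp\fA_0\adjun)$. Since $\fA_0$ is skew-symmetric and closed, $\id+\fA_0$ is bounded below with closed range $\ran(\id+\fA_0)=(\ker(\id+\fA_0\adjun))^{\perp}=\mathcal{N}_{-}^{\perp}$; since $\id-\fA_0\adjun$ restricts to $\id+\fA_0$ on $\dom\fA_0$ and acts as $2\,\id$ on $\mathcal{N}_{-}$, one gets $\ran(\id-\fA_0\adjun)=\X$, and the solutions of $x-\fA_0\adjun x=f$ form an affine space $x_f+\mathcal{N}_{+}$ (via the von Neumann-type decomposition $\dom\fA_0\adjun=\dom\fA_0\dotplus\mathcal{N}_{+}\dotplus\mathcal{N}_{-}$, available because $\iu\fA_0$ is symmetric with deficiency subspaces $\mathcal{N}_{\mp}$). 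I would then invoke the standard boundary-triple facts — derivable from~\eqref{eq:boundary-triple} and surjectivity of $\begin{psmallmatrix}\fB_1\\\fB_2\end{psmallmatrix}$ — that $\fB_1|_{\mathcal{N}_{+}}\colon\mathcal{N}_{+}\to\U$ is bijective and that the Weyl operator $M:=\fB_2\circ(\fB_1|_{\mathcal{N}_{+}})^{-1}\in\Lb(\U)$ satisfies $M+M\adjun>0$, say $\Re\scprod{M\varphi}{\varphi}_{\U}\ge\delta\|\varphi\|^{2}$ with $\delta>0$; this positivity is exactly $2\Re\scprod{\fB_1 n}{\fB_2 n}_{\U}=2\Re\scprod{\fA_0\adjun n}{n}_{\X}=2\|n\|^{2}$ for $n\in\mathcal{N}_{+}$. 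Writing $a:=\fB_1 x_f$, $b:=\fB_2 x_f$, a suitable $x=x_f+n_{+}$ exists iff $(a+\varphi,\,b+M\varphi)\in\Theta$ for some $\varphi\in\U$.

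Substituting $\psi:=a+\varphi$, this last condition becomes: there is $(\psi,\chi)\in\Theta$ with $M\psi-\chi=Ma-b$, so it suffices to show $\dset{M\psi-\chi}{(\psi,\chi)\in\Theta}=\U$. From $\Re\scprod{\chi}{\psi}_{\U}\le0$ we get $\Re\scprod{M\psi-\chi}{\psi}_{\U}\ge\delta\|\psi\|^{2}$, hence $\|M\psi-\chi\|\ge\delta\|\psi\|$ on $\Theta$, and since $\Theta$ is closed this set is closed. If it were a proper subspace, pick $c_0\ne0$ orthogonal to it and adjoin the pair $(-c_0,M\adjun c_0)$: for $(\psi,\chi)\in\Theta$ and $\lambda\in\C$ one computes $\Re\scprod{\chi+\lambda M\adjun c_0}{\psi-\lambda c_0}_{\U}=\Re\scprod{\chi}{\psi}_{\U}-|\lambda|^{2}\Re\scprod{Mc_0}{c_0}_{\U}\le0$, the cross terms cancelling because $\scprod{M\adjun c_0}{\psi}_{\U}=\overline{\scprod{M\psi}{c_0}_{\U}}=\overline{\scprod{\chi}{c_0}_{\U}}$ (using $c_0\perp\dset{M\psi-\chi}{(\psi,\chi)\in\Theta}$), so $\Theta':=\Theta+\spn\{(-c_0,M\adjun c_0)\}$ is dissipative; and $(-c_0,M\adjun c_0)\notin\Theta$, since otherwise $-(M+M\adjun)c_0$ would lie in the set orthogonal to $c_0$, forcing $\Re\scprod{(M+M\adjun)c_0}{c_0}_{\U}=0$, which is false. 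Thus $\Theta'\supsetneq\Theta$, contradicting maximal dissipativity of $\Theta$. Hence the set equals $\U$, the required $\varphi$ (and $n_{+}$) exists, $\ran(\id-\fA_{\Theta})=\X$, and $\fA_{\Theta}$ is maximally dissipative.

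The main obstacle is the middle step: extracting, purely from the two boundary-triple axioms, the surjectivity of $\fB_1|_{\mathcal{N}_{+}}$ onto $\U$, the boundedness of the Weyl operator $M$, and the strict positivity $\Re M>0$, together with the algebraic direct-sum decomposition of $\dom\fA_0\adjun$. Once that package is available, the $\U$-problem is a short extension-of-relations computation; assembling this package is presumably why the statement is quoted from \cite{Sk-Phd} rather than reproved here.
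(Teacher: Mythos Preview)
The paper does not give its own proof of this theorem: it is quoted verbatim from \cite[Prop.~2.4.10]{Sk-Phd}, so there is no ``paper's proof'' to compare against. Your sketch is a correct outline of one of the standard arguments. Dissipativity is indeed immediate from~\eqref{eq:boundary-triple}, and your reduction of maximality to surjectivity of $\id-\fA_{\Theta}$, followed by the von~Neumann decomposition and the $\U$-level problem, is sound; the little extension argument with $(-c_{0},M\adjun c_{0})$ is clean and the cancellation of the cross terms is exactly the point where orthogonality of $c_{0}$ to $\{M\psi-\chi:(\psi,\chi)\in\Theta\}$ is used.

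You have also located the real work accurately: the ``package'' (bijectivity of $\fB_{1}\vert_{\mathcal{N}_{+}}$, boundedness of the Weyl operator $M$, and the strict accretivity $\Re M\ge\delta>0$) is precisely what the boundary-triple machinery provides, and establishing it from the two axioms alone takes several pages; this is why the result is cited rather than reproved. One remark: the strict positivity you need is $\Re\scprod{M\varphi}{\varphi}_{\U}=\|(\fB_{1}\vert_{\mathcal{N}_{+}})^{-1}\varphi\|_{\X}^{2}$, so the constant $\delta$ is the square of the lower bound for $(\fB_{1}\vert_{\mathcal{N}_{+}})^{-1}$, which in turn comes from continuity of $\fB_{1}$ on $\dom\fA_{0}\adjun$; you might make that dependence explicit. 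An alternative (and often shorter) route, used in several references, avoids the Weyl function altogether by passing to Cayley transforms: maximal dissipativity of $\Theta$ is equivalent to its Cayley transform being an everywhere-defined contraction on $\U$, and the boundary triple induces a unitary between $\ker(\id\mp\fA_{0}\adjun)$ and $\U$ that carries this over to $\fA_{\Theta}$.
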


Since the systems considered in this article has finite-dimensional input and output spaces, our attention is directed towards linear relations in the form of $\Theta = \ker \begin{bmatrix}W_{1} & W_{2}\end{bmatrix}$, where $W_1$ and $W_2$ are square matrices. We establish a criterion for maximal dissipativity of such relations.

\begin{lemma}\label{th:W1-W2-condition-implies-max-dissipativ}
  Let $W_{1}, W_{2} \in \C^{\ell\times\ell}$. Then, for $W \coloneq \begin{bmatrix}W_1 & W_2\end{bmatrix}$, the relation $\ker W$ is maximally dissipative, if $W$ has full row rank and $W_{1}W_{2}\adjun + W_{2}W_{1}\adjun \geq 0$.
\end{lemma}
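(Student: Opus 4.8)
The plan is to translate the statement into the language of the indefinite inner product induced by $J := \begin{bsmallmatrix} 0 & \id_{\ell} \\ \id_{\ell} & 0 \end{bsmallmatrix} \in \C^{2\ell \times 2\ell}$. Since $\scprod{a}{b}_{\C^\ell} = b^*a$, one has $z^*Jz = a^*b + b^*a = 2\Re\scprod{a}{b}_{\C^\ell}$ for $z = \begin{psmallmatrix} a \\ b \end{psmallmatrix}$, so a linear relation $\Theta \subseteq \C^\ell \times \C^\ell = \C^{2\ell}$ is dissipative exactly when $z^*Jz \le 0$ for all $z \in \Theta$. Moreover, the hypothesis $W_1W_2^* + W_2W_1^* \ge 0$ is precisely $WJW^* \ge 0$.

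First I would use full row rank of $W \in \C^{\ell\times 2\ell}$ to get $\dim\ker W = \ell$ and $\ker W = (\ran W^*)^{\perp}$, the ordinary orthogonal complement in $\C^{2\ell}$. Evaluating the $J$-form on $\ran W^*$ gives, for $z = W^*v$, that $z^*Jz = v^*(WJW^*)v \ge 0$; hence $\ran W^*$ is an $\ell$-dimensional $J$-nonnegative subspace. The heart of the argument is then the linear-algebra fact that the orthogonal complement of an $\ell$-dimensional $J$-nonnegative subspace of $\C^{2\ell}$ is $J$-nonpositive. I would prove this by diagonalization: the unitary self-adjoint $U = \tfrac{1}{\sqrt 2}\begin{bsmallmatrix} \id_\ell & \id_\ell \\ \id_\ell & -\id_\ell \end{bsmallmatrix}$ satisfies $U^*JU = \diag(\id_\ell, -\id_\ell)$, so after conjugation the nonnegative subspace becomes the graph $\{(p,Tp)\}$ of a contraction $T$ (the map is well defined and has norm $\le 1$ thanks to the $J$-nonnegativity and the dimension being exactly $\ell$), whose orthogonal complement is $\{(-T^*s, s)\}$, on which the diagonalized form equals $\norm{T^*s}^2 - \norm{s}^2 \le 0$. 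Transporting back proves $\ker W$ is dissipative.

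For maximality I would invoke that any subspace of $\C^{2\ell}$ on which the nondegenerate form $z \mapsto z^*Jz$ of signature $(\ell,\ell)$ is $\le 0$ has dimension at most $\ell$ (a dimension count against a maximal $J$-positive subspace). Since $\ker W$ is itself $\ell$-dimensional and dissipative, every dissipative relation containing it must equal it, so $\ker W$ admits no proper dissipative extension, i.e., it is maximally dissipative.

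The step I expect to be the main obstacle is the linear-algebraic claim that the orthogonal complement of a maximal $J$-nonnegative subspace is $J$-nonpositive: it genuinely uses that the dimension is exactly $\ell$ together with the diagonalization trick above, and a naive attempt to read dissipativity directly off $W_1a + W_2b = 0$ gets stuck because $a,b$ are dual to the rows of $W$ and cannot be expressed through $W_1,W_2$ unless one of them happens to be invertible. All remaining parts — the dimension counts and the bookkeeping — are routine.
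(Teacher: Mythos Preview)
Your argument is correct and self-contained: the diagonalization by $U$ turns an $\ell$-dimensional $J$-nonnegative subspace into the graph of a contraction, its orthogonal complement into the ``antigraph'' $\{(-T^*s,s)\}$, and the dimension bound for $J$-nonpositive subspaces gives maximality.

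The paper takes a different, citation-based route. It observes that $\ran\begin{psmallmatrix}W_2^*\\-W_1^*\end{psmallmatrix}$ is the adjoint of the relation $\ker W$ (in the sense of \cite{BeHaSn20}), that this adjoint is maximally dissipative by \cite[Lem.~3.5]{GeReHa21}, and then invokes \cite[Prop.~1.6.7]{BeHaSn20} to transfer maximal dissipativity back to $\ker W$. So the paper never computes the $J$-form on $\ker W$ directly; instead it passes to the adjoint relation, where the hypothesis $W_1W_2^*+W_2W_1^*\ge 0$ can be read off immediately. Your approach has the advantage of being entirely elementary linear algebra with no external theory of linear relations required, and it makes transparent \emph{why} the orthogonal complement inherits the opposite sign --- essentially the Cayley-type parametrization of maximal semidefinite subspaces in a Kre\u{\i}n space. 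The paper's approach is shorter on the page but presupposes familiarity with adjoints of multivalued relations and the cited lemmas; it also generalizes more readily to the infinite-dimensional setting where your dimension-counting maximality argument would need to be replaced.
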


\begin{proof}
The relation $\ran\sbvek{W_2\adjun}{-W_1\adjun}$ is maximally dissipative by \cite[Lem.~3.5]{GeReHa21} and, further, it
is, in the sense of \cite[Def.~1.3.1]{BeHaSn20}, the adjoint of $\ker W$. Then we can conclude from \cite[Prop.~1.6.7]{BeHaSn20} that $\ker W$ is maximally dissipative.
\end{proof}

\newcommand{\etalchar}[1]{$^{#1}$}

\end{document}